\tikzset{>=latex}
\def\AA {{\mathbb A}}     
\def\CC {{\mathbb C}}     
\def\FF {{\mathbb F}}     
\def\KK {{\mathbb K}}     
\def\PP {{\mathbb P}}     
\def\QQ {{\mathbb Q}}     
\def\RR {{\mathbb R}}     
\def\ZZ {{\mathbb Z}}     
\def\mc {\mathcal}
\def\Ob {\mathrm{Ob}}
\def\Hom {\mathrm{Hom}}
\def\Aut {\mathrm{Aut}}
\def\Ext {\mathrm{Ext}}
\newtheorem{theorem}{Theorem}[section]
\newtheorem{lemma}[theorem]{Lemma}
\newtheorem{prop}[theorem]{Proposition}
\theoremstyle{definition}
\newtheorem{example}[theorem]{Example}
\newtheorem{remark}[theorem]{Remark}
\theoremstyle{plain}
\newtheorem{conj}[theorem]{Conjecture}
\numberwithin{equation}{section}
\newlength{\miniwidth}
\newcommand{\Addresses}{{
  \bigskip
  \footnotesize

	(F.~Haiden) \textsc{University of Oxford, Mathematical Institute, Andrew Wiles Building, Woodstock Road, Oxford OX2 6GG, UK} \par\nopagebreak
	\textit{E-mail:} \texttt{Fabian.Haiden@maths.ox.ac.uk}
	\medskip
	
}}
\begin{document}

\title{Legendrian skein algebras and Hall algebras}
\author{Fabian Haiden}
\date{}
\maketitle

\begin{abstract}
We compare two associative algebras which encode the ``quantum topology'' of Legendrian curves in contact threefolds of product type $S\times\mathbb R$.
The first is the skein algebra of graded Legendrian links and the second is the Hall algebra of the Fukaya category of $S$.
We construct a natural homomorphism from the former to the latter, which we show is an isomorphism if $S$ is a disk with marked points and injective if $S$ is the annulus.
\end{abstract}


\setcounter{tocdepth}{2}
\tableofcontents

\section{Introduction}

This work relates two different constructions of associative algebras which capture the ``quantum topology'' of Legendrian curves in contact threefolds.
\begin{enumerate}[1)]
\item
Legendrian skein algebras
\item
Hall algebras of Fukaya categories of surfaces
\end{enumerate}
The algebras 1) are defined by imposing linear relations between curves which differ in some small ball, while 2) involves first constructing an $A_\infty$-category whose structure constants count immersed disks and then passing to its Hall algebra, a kind of decategorification.
We relate these by constructing a natural homomorphism from 1) to 2).
Two previous works contain evidence of a general connection between (non-Legendrian) skein algebras and Hall algebras.
Morton--Samuelson~\cite{morton_samuelson} show that the HOMFLY-PT skein of the thickened torus is isomorphic to a specialization of Hall algebra of the elliptic curve, and Cooper--Samuelson~\cite{cooper_samuelson} give a conjectural presentation of the Hall algebra of the Fukaya category by skein-like relations.
Our original motivation was to better understand the general theory behind these results.
While we make progress towards this goal, some aspects still remain more mysterious, such as the appearance of the HOMFLY-PT skein relations instead of the legendrian ones.

\subsection{Two algebras from Legendrian curves}

\subsubsection*{Skein modules}

Let $M$ be a contact threefold with oriented contact distribution, i.e. a smooth threefold equipped with a completely nonintegrable oriented rank two subbundle $\xi \subset TM$.
We use a variant of the skein module for graded curves which depends on an additional choice of grading structure on $M$: a rank one subbundle of the contact distribution $\xi$.
The skein module of $M$ is then defined as the $\ZZ[q^{\pm},(q-1)^{-1}]$-module generated by isotopy classes of graded embedded closed Legendrian curves (refer to them as \textit{links} for brevity) in $M$ modulo the following linear skein relations, where $\delta_{m,n}=1$ for $m=n$ and $0$ otherwise, as usual.
(The labels $m,n$ show the grading, see Subsection~\ref{sssec_grading}.)
\begin{equation}\label{gs_1}
\begin{tikzpicture}[baseline=-\dimexpr\fontdimen22\textfont2\relax,scale=.6]
\draw[thick] (-1,1) to (1,-1);
\draw[white,line width=2mm] (-1,-1) to (1,1);
\draw[thick] (-1,-1) to (1,1);
\node[blue,rotate=45,below] at (-.7,-.6) {\scriptsize{m}};
\node[blue,rotate=-45,above] at (-.7,.6) {\scriptsize{n}};
\draw[dashed] (0,0) circle [radius=1.414];
\end{tikzpicture}%
-q^{(-1)^{m-n}}\;
\begin{tikzpicture}[baseline=-\dimexpr\fontdimen22\textfont2\relax,scale=.6]
\draw[thick] (-1,-1) to (1,1);
\draw[white,line width=2mm] (-1,1) to (1,-1);
\draw[thick] (-1,1) to (1,-1);
\node[blue,rotate=45,below] at (-.7,-.6) {\scriptsize{m}};
\node[blue,rotate=-45,above] at (-.7,.6) {\scriptsize{n}};
\draw[dashed] (0,0) circle [radius=1.414];
\end{tikzpicture}%
=\delta_{m,n}(q-1)\;
\begin{tikzpicture}[baseline=-\dimexpr\fontdimen22\textfont2\relax,scale=.6]
\draw[thick] (-1,-1) to (-.62,-.62) to [out=45,in=-45] (-.62,.62) to (-1,1);
\draw[thick] (1,-1) to (.62,-.62) to [out=135,in=-135] (.62,.62) to (1,1);
\node[blue,rotate=45,below] at (-.7,-.6) {\scriptsize{n}};
\node[blue,rotate=45,above] at (.7,.6) {\scriptsize{n}};
\draw[dashed] (0,0) circle [radius=1.414];
\end{tikzpicture}%
-\delta_{m,n+1}(1-q^{-1})\;
\begin{tikzpicture}[baseline=-\dimexpr\fontdimen22\textfont2\relax,scale=.6]
\draw[thick] (-1,-1) to (-.62,-.62) to [out=45,in=135] (.62,-.62) to (1,-1);
\draw[thick] (-1,1) to (-.62,.62) to [out=-45,in=-135] (.62,.62) to (1,1);
\draw (0,.25) to (0,.5);
\draw (0,-.25) to (0,-.5);
\node[blue,rotate=-45,below] at (.7,-.6) {\scriptsize{n}};
\node[blue,rotate=-45,above] at (-.7,.6) {\scriptsize{n}};
\draw[dashed] (0,0) circle [radius=1.414];
\end{tikzpicture}%
\tag{S1}
\end{equation}
\begin{equation}\label{gs_2}
\begin{tikzpicture}[baseline=-\dimexpr\fontdimen22\textfont2\relax,scale=.6]
\draw[thick] (.1,.1) to [out=45,in=90] (1,0) to [out=-90,in=-45] (0,0) to [out=135,in=90] (-1,0) to [out=-90,in=-135] (-.1,-.1);
\draw[dashed] (0,0) circle [radius=1.414];
\end{tikzpicture}
=(q-1)^{-1}\;
\begin{tikzpicture}[baseline=-\dimexpr\fontdimen22\textfont2\relax,scale=.6]
\draw[dashed] (0,0) circle [radius=1.414];
\end{tikzpicture}
\tag{S2}
\end{equation}
\begin{equation}\label{gs_3}
\begin{tikzpicture}[baseline=-\dimexpr\fontdimen22\textfont2\relax,scale=.6]
\draw[thick] (-1.414,0) to [out=0,in=-135] (0,0) to [out=45,in=0] (0,.8) to [out=180,in=135] (-.1,.1);
\draw[thick] (.1,-.1) to [out=-45,in=180] (1.414,0);
\draw[dashed] (0,0) circle [radius=1.414];
\end{tikzpicture}
=0
\tag{S3}
\end{equation}
These relations should be interpreted as follows.
The links involved are identical outside a small ball $B$ in $M$. 
Inside $B$ Darboux coordinates have been chosen, i.e. an identification with an open ball in standard contact $\RR^3$ with $\xi=\mathrm{Ker}(dz-ydx)$, and what is displayed is the projection to the $(x,y)$-plane.
Furthermore, at the visible crossings the two strands should be sufficiently close to each other in the sense that we can move them arbitrarily close via an isotopy supported in $B$. 

In the simplest case, when $M$ is the standard contact $\RR^3$, it follows from work of Rutherford~\cite[Theorem 3.1]{rutherford06} that skein module is freely generated by the empty link.
Thus, the class of an arbitrary link $L$ is equal to $R_L\emptyset$ for some $R_L\in\ZZ[q^{\pm},(q-1)^{-1}]$, which is, up to change of variables and normalization (described in Subsection~\ref{subsec_skeinrel}), the \textit{graded ruling polynomial} of $L$.
This invariant of graded Legendrian links can be defined more directly by counting \textit{graded normal rulings} of the front projection of $L$, see Chekanov--Pushkar~\cite{cp_4conj}.
Thus, for general threefold $M$ the image of a link in the skein can be viewed as the appropriate generalization of a knot polynomial.

The skein module has an algebra structure in the case where $M=S\times\RR$ with contact form $p_1^*\theta+p_2^*dz$
where $p_1$, $p_2$ are the projections to $S$ and $\RR$ respectively, $\theta$ is a 1-form on $S$ with $d\theta\neq 0$ pointwise (a \textit{Liouville form}), and $z$ is the standard coordinate on $\RR$.
Also the grading structure should be pulled back from a foliation $\eta$ on $S$.
The product $L_1L_2$ is defined by ``stacking $L_2$ on top of $L_1$'', i.e. translating $L_2$ in sufficiently far in the positive $z$-direction so that it is entirely above $L_1$ and then taking the union of the two links.

We also allow the following extension of our setup.
Suppose that $S$ has boundary and pick a discrete subset $N\subset\partial S$.
Allow links $L$ which are compact Legendrian curves with $\partial L\subset N\times \RR$.
Thus, when projecting to $S$, links should have endpoints in $N$.
Furthermore, impose the following \textit{boundary skein relations} near $N\times\RR$, where the dotted line is a part of $\partial S$ and $s(m-n):=(-1)^{m-n}$ if $m>n$ and $s(m-n):=0$ if $m\leq n$.
\begin{equation}\label{gs_1b}
\begin{tikzpicture}[baseline=-\dimexpr\fontdimen22\textfont2\relax,scale=.6]
\draw[thick] (0,0) to (1,-1);
\draw[white,line width=2mm] (-.1,-.1) to (1,1);
\draw[thick] (0,0) to (1,1);
\node[blue,rotate=45,above] at (.7,.6) {\scriptsize{m}};
\node[blue,rotate=-45,below] at (.7,-.6) {\scriptsize{n}};
\draw[densely dotted] (0,-1.414) to (0,1.414);
\draw[dashed] (0,0) circle [radius=1.414];
\end{tikzpicture}%
-q^{s(m-n)}\;
\begin{tikzpicture}[baseline=-\dimexpr\fontdimen22\textfont2\relax,scale=.6]
\draw[thick] (0,0) to (1,1);
\draw[white,line width=2mm] (-.1,.1) to (1,-1);
\draw[thick] (0,0) to (1,-1);
\node[blue,rotate=45,above] at (.7,.6) {\scriptsize{m}};
\node[blue,rotate=-45,below] at (.7,-.6) {\scriptsize{n}};
\draw[densely dotted] (0,-1.414) to (0,1.414);
\draw[dashed] (0,0) circle [radius=1.414];
\end{tikzpicture}%
=\delta_{m,n}(q-1)\;
\begin{tikzpicture}[baseline=-\dimexpr\fontdimen22\textfont2\relax,scale=.6]
\draw[thick] (1,-1) to (.62,-.62) to [out=135,in=-135] (.62,.62) to (1,1);
\node[blue,rotate=45,above] at (.7,.6) {\scriptsize{n}};
\draw[densely dotted] (0,-1.414) to (0,1.414);
\draw[dashed] (0,0) circle [radius=1.414];
\end{tikzpicture}%
\tag{S1b}
\end{equation}
\begin{equation}\label{gs_2b}
\begin{tikzpicture}[baseline=-\dimexpr\fontdimen22\textfont2\relax,scale=.6]
\draw[thick] (.1,.1) to [out=45,in=90] (1,0) to [out=-90,in=-45] (0,0);
\draw[densely dotted] (0,-1.414) to (0,1.414);
\draw[dashed] (0,0) circle [radius=1.414];
\end{tikzpicture}
=(q-1)^{-1}\;
\begin{tikzpicture}[baseline=-\dimexpr\fontdimen22\textfont2\relax,scale=.6]
\draw[densely dotted] (0,-1.414) to (0,1.414);
\draw[dashed] (0,0) circle [radius=1.414];
\end{tikzpicture}
\tag{S2b}
\end{equation}

\subsubsection*{The Fukaya category and its Hall algebra}

For the purpose of defining the Fukaya category we assume that $S$ is a compact surface with boundary and that the Liouville form $\theta$ is chosen so that its dual vector field points outward along $\partial S$.
As above, we have a finite set $N\subset\partial S$ and a foliation $\eta$ on $S$ which provides the grading.
Given a choice of ground field $\KK$ one defines two variants of the Fukaya category, $\mc F=\mc F(S,N,\theta,\eta,\KK)$ and $\mc F^{\vee}=\mc F^{\vee}(S,N,\theta,\eta,\KK)$, whose objects are compact graded Legendrian curves $L$ with $\KK$-linear local system $E$ of finite rank and Maurer--Cartan element (formal deformation) $\delta\in\Hom^1((L,E),(L,E))_{>0}$, and where for $\mc F$ we require $\partial L\subset(\partial S\setminus N)\times \RR$ and for $\mc F^{\vee}$ we require $\partial L\subset N\times\RR$.
The setup is described in more detail in Subsection~\ref{subsec_setup}.
Let us make two remarks to relate this to the existing literature.
First, in the approach to Fukaya categories of surfaces based on arc systems or ribbon graphs, see for example \cite{stz,dk_triangulated, hkk}, the category $\mc F$ is defined as a homotopy colimit, while $\mc F^{\vee}$ is defined as a homotopy limit in the category of dg-categories up to Morita equivalence.
Second, in the Legendrian knot theory literature Maurer--Cartan elements and rank one local systems appear in a different guise as augmentations of the Chekanov--Eliashberg DGA, see also Subsection~\ref{subsubsec_ce}.

Hall algebras were first considered by Steinitz~\cite{steinitz} and later rediscovered by Hall~\cite{hall_algebra}.
Their definition immediately generalizes to abelian categories satisfying suitable finiteness conditions.
For the case of triangulated dg-categories one needs to modify the naive definition to take into account negative $\Ext$-groups, as was pointed out by To\"en~\cite{toen_derived_hall}.
Conceptually, one replaces groupoid cardinality with homotopy ($\infty$-groupoid) cardinality as defined by Baez--Dolan~\cite{baez_dolan}.
Let us state the definition used here.
Assume $\mc C$ is an extension closed $A_\infty$ category over a finite field $\KK=\FF_q$ such that $\Ext^k(A,B)$ is finite-dimensional for all $A,B\in\Ob(\mc C)$, $k\in\ZZ$, and vanishes for $k$ less than some constant depending on $A,B$.
These conditions are satisfied for $\mc F^{\vee}(S,N,\theta,\eta,\KK)$ if $\KK$ is a finite field.
Define $\mathrm{Hall}(\mc C)$ to be the algebra with underlying $\QQ$-vector space with basis the set of isomorphism classes of objects in $\mc C$ and product
\[
[A]\cdot [C]:=\left(\prod_{i=0}^\infty\left|\Ext^{-i}(C,A)\right|^{(-1)^{i+1}}\right)\sum_{f\in\Ext^1(C,A)}\left[\mathrm{Cone}(C[-1]\xrightarrow{f} A)\right]
\]
where $[A]$ denotes the basis element of $\mathrm{Hall}(\mc C)$ corresponding to the isomorphism class of the object $A$.
We should note that this is not the formula of To\"en in~\cite{toen_derived_hall}, but gives an isomorphic algebra after rescaling the basis vectors.
Instead we are following the conventions of Kontsevich--Soibelman~\cite{ks}, specialized to the finite setting.
The difference comes in regarding the elements of the Hall algebra either as functions or as measures. 
We adopt the latter view.

As a general remark, there are some limitations to using the version of the Hall algebra based on counting, as opposed to  motivic/cohomological variants~\cite{ks}.
In the setting of Fukaya categories of punctured surfaces one encounters only union of tori (Artin--Tate motives), so a more sophisticated approach would require introducing a lot of machinery for a rather small payoff.
It would however, via the Serre polynomial, give structure constants depending on a formal variable $q$ instead of the number of elements of the finite field.

\subsection{Main result}

We fix a compact surface $S$ with boundary, $N\subset\partial S$, a Liouville form $\theta$ on $S$, a grading structure $\eta$, and a finite field $\KK$ as before.
If $L$ is a graded Legendrian link in $S\times \RR$, then the element $\Phi(L)\in\mathrm{Hall}(\mc F^{\vee})$ we assign to it is given, up to scalar factor, by the sum of all objects supported on $L$.
More precisely, we set
\[
\Phi(L)=(q-1)^{-|\pi_0(L)|}q^{-e(L)}\sum_{E}\sum_{\delta\in\mc{MC}(L,E)}[(L,E,\delta)]
\]
where $e(L)\in\ZZ$ is a self-intersection number, the first sum runs over all rank one $\KK$-linear local systems on $L$ and $\mc{MC}(L,E)$ is the set of Maurer--Cartan elements.
In the main text (Subsection~\ref{subsec_skeinhall}) we give a more conceptual definition and prove the explicit formula above.
The following is our central result, see Theorem~\ref{thm_homomorphism} in the main text.

\begin{theorem}
Let $S$ be a compact surface with boundary, $N\subset\partial S$, $\theta$ a Liouville form on $S$, $\eta$ a grading structure, and $\KK$ a finite field.
Then $\Phi$ defined above induces a homomorphism
\[
\Phi:\mathrm{Skein}(S,N,\theta,\eta)\otimes_{\ZZ[t^{\pm},(1-t)^{-1}]}\QQ\longrightarrow\mathrm{Hall}(\mc F^{\vee}(S,N,\theta,\eta,\KK))
\]
of $\QQ$ algebras where $t\mapsto q=|\KK|$.
\end{theorem}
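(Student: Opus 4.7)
The plan is to verify the theorem in three stages: well-definedness on isotopy classes of graded Legendrian links, compatibility with each of the skein relations, and multiplicativity with respect to the stacking product. In each case the strategy is to match the combinatorial prefactors in the definition of $\Phi(L)$ against the structure constants in the Hall product formula, reducing the assertion to a local computation in a Darboux ball.

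First I would check that $\Phi$ descends to a $\QQ$-linear map on isotopy classes: Legendrian isotopy induces quasi-equivalences of the decorated objects $(L,E,\delta)$ and preserves isomorphism classes in $\mc F^{\vee}$, while $|\pi_0(L)|$ and the self-intersection $e(L)$ are isotopy invariants of graded front diagrams. The heart of the argument is the verification of the skein relations. For (\ref{gs_1}), the key geometric input is that in a Darboux ball where two graded strands of grading $m$ and $n$ cross, the configurations correspond to three objects in $\mc F^{\vee}$ related by an exact triangle: the crossing change is surgery along the Reeb chord generator, whose degree is controlled by $m-n$, and the ``smoothing'' pictures provide source and cone. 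Using the Hall product identity arising from such a triangle and summing over Maurer--Cartan decorations, one must reproduce the coefficients $q^{(-1)^{m-n}}$, $\delta_{m,n}(q-1)$, and $\delta_{m,n+1}(1-q^{-1})$. This requires that the shift of $e(L)$ across the three local configurations, combined with the infinite alternating product $\prod|\Ext^{-i}|^{(-1)^{i+1}}$, cancels against the sum over local MC-elements and rank-one local systems. For (\ref{gs_2}), the kink introduces a local degree-$0$ self-intersection whose MC/local system contribution is exactly $(q-1)$, compensating the change in the $(q-1)^{-|\pi_0|}$ normalization. For (\ref{gs_3}) the small contractible arc represents a trivial object whose Hall class vanishes. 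The boundary relations (\ref{gs_1b}) and (\ref{gs_2b}) are analogous, but restricted to generators coming from the boundary; the asymmetric weight $s(m-n)$ reflects that boundary-to-boundary morphisms sit in a one-sided degree range.

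Multiplicativity follows from the observation that if $L_2$ is stacked above $L_1$ in the $z$-direction, then the complex $\Hom_{\mc F^{\vee}}(L_2,L_1)$ is supported in the degree range compatible with the Hall product, and every Maurer--Cartan decoration of $L_1\cup L_2$ in $\mc F^{\vee}$ arises uniquely from a triple $((E_1,\delta_1),(E_2,\delta_2),f)$ with $f\in\Ext^1(L_2,L_1)$, because the only intersection-point generators between the two pieces are the downward Reeb chords introduced by the stacking. Matching the prefactors $(q-1)^{-|\pi_0(L_1\cup L_2)|}q^{-e(L_1\cup L_2)}$ against the Hall product normalization then yields $\Phi(L_1)\cdot\Phi(L_2)=\Phi(L_1L_2)$.

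The main obstacle is the bookkeeping in (\ref{gs_1}): one must simultaneously track the grading-dependent prefactors in the Hall product, the integer shift in $e(L)$ across the three local pictures, the dimension of the local Reeb-chord generators in each $\Ext^k$, and the restriction of the MC sum to the Darboux ball, and verify that these combine to give precisely the stated coefficients. The subtlest check is distinguishing the $\delta_{m,n}(q-1)$ term from the $\delta_{m,n+1}(1-q^{-1})$ term, which amounts to recognising that the ``fused'' smoothing and the ``parallel'' smoothing sit in different degrees of $\Ext^\bullet$ between the two local strands, and that these two parities of degree are the only ones producing nontrivial contributions.
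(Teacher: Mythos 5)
Your overall architecture (isotopy invariance, then the skein relations one at a time, then multiplicativity) matches the paper's, and your treatment of multiplicativity is essentially the paper's argument: for stacked links the Maurer--Cartan data of the union splits as $(\delta_{11},\delta_{21},\delta_{22})$ with the cross term $\delta_{21}$ a closed degree-one morphism, and the prefactors match. But the verification of \eqref{gs_1} --- which you correctly single out as the crux --- is where your proposal has a genuine gap. You propose to realize the three local pictures as ``source and cone'' of an exact triangle and invoke the Hall product identity. That framing does not directly apply: the crossing in \eqref{gs_1} is internal to a \emph{single} link (the two strands may even lie on the same component and reconnect in several ways), so there are no two objects whose Hall product one could form. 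The mechanism the paper actually uses is different: one views the crossed configuration $L_+$ as carrying a degree-one self-intersection generator at $p$, and splits the sum over Maurer--Cartan elements according to whether the component $\delta_p$ vanishes. The locus $\delta_p=0$ is identified with $\mc{MC}(L_-,E)$ and accounts for the $q^{(-1)^{m-n}}$ term via the shift in $e(L)$; the locus $\delta_p\neq 0$ is identified with the Maurer--Cartan data of the smoothed curve $L_s$ by Proposition~\ref{prop_resolve}, whose proof requires an explicit disk count establishing the zeroth-order isomorphism and then the isomorphism-lifting machinery for curved $A_\infty$-categories (Proposition~\ref{prop_isolift}) to promote it to all orders, together with a careful count of rank-one local systems via $H^1(\,\cdot\,;\KK^\times)$ and Euler characteristics to get the $(q-1)$ coefficient. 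None of this is present in your sketch; you name the resulting bookkeeping as ``the main obstacle'' but do not supply the idea that resolves it.

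Two smaller inaccuracies: for \eqref{gs_2} the single self-crossing of the unknot is in degree $2$, not degree $0$, and the factor $(q-1)^{-1}$ arises because only the local system with monodromy $-1$ admits a (unique, zero) Maurer--Cartan element and the resulting object is a zero object, combined with the drop in $|\pi_0|$; it is not an ``MC contribution of $(q-1)$ at the crossing.'' For \eqref{gs_3} the curve does not give ``a trivial object whose Hall class vanishes'' --- the Hall class of a zero object is the unit, not zero. The correct statement is that the smallest-area $1$-gon contributes an uncancellable $m_0$ term, so $\mc{MC}(L,E)=\emptyset$ for every $E$ and $\Phi(L)$ is an empty sum.
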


The proof is based on a precise analysis of the relation between formal deformations (Maurer--Cartan elements) and geometric deformation (resolving an intersection point), see Proposition~\ref{prop_resolve}, which in turn uses the algebraic machinery of curved $A_\infty$-categories developed in Section~\ref{sec_algebra}.

An obvious question is whether this homomorphism is injective and/or surjective.
The dimension of the Hall algebra in the way it is defined here depends in general on the size $|\KK|$ of the finite field, while the dimension of the skein algebra does not.
In practice, one usually passes to a subalgebra of the Hall-algebra whose dimension is independent of $|\KK|$, so perhaps the image of $\Phi$ should be viewed as a better behaved substitute for the full Hall algebra.
In those cases where all objects of $\mc F^{\vee}$ are rigid, $\Phi$ does have a chance to be an isomorphism and indeed we show:

\begin{theorem}
If $S$ is a disk with $n+1=|N|$ marked points on the boundary, then $\Phi$ is an isomorphism.
Thus the graded Legendrian skein algebra of a disk with $n+1$ marked points on the boundary is, after specialization to a prime power $q$, isomorphic to the Hall algebra of the bounded derived category of $\FF_q$-linear representations of an $A_n$ type quiver.
\end{theorem}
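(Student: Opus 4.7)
The plan is to compare explicit bases on both sides and show $\Phi$ is unitriangular with respect to them, which, combined with the multiplicativity already established, will give an isomorphism.

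First I would produce a basis of the skein algebra. Using the skein relations iteratively, any link in $D^2\times\RR$ reduces to a $\ZZ[q^{\pm},(q-1)^{-1}]$-combination of links whose front projection is a disjoint union of embedded arcs with endpoints in $N$: (S1b) and (S1) resolve crossings near the boundary and in the interior respectively, (S3) kills arcs with free interior endpoints, and (S2), (S2b) remove contractible loops. Because $D^2$ is simply connected, an embedded arc between two distinct points of $N$ is determined up to isotopy by its pair of endpoints and its integer grading, so the resulting simple multicurves are parameterized by finite multisets of graded endpoint-pairs admitting a disjoint realization. I would then argue, via a grading-respecting normal form, that this reduction produces an actual basis rather than merely a spanning set.

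Next I would identify the Hall-algebra basis. The arc-system approach to surface Fukaya categories (as in references cited in the introduction) identifies $\mc F^{\vee}(D^2,N,\theta,\eta,\KK)$ with the bounded derived category $D^b(\mathrm{rep}_\KK A_n)$ for an appropriately oriented $A_n$-quiver. The $\binom{n+1}{2}$ isotopy classes of embedded arcs between points of $N$ correspond bijectively to shifts of the $\binom{n+1}{2}$ indecomposable interval modules (the positive roots of $A_n$). Since all objects are rigid, $\mc{MC}(L,E)=\{0\}$ for every contractible arc $L$, and the unique rank-one local system reduces $\Phi(L)=(q-1)^{-1}q^{-e(L)}[L]$ to a scalar multiple of the class of the corresponding indecomposable. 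By Krull--Schmidt, $\mathrm{Hall}(\mc F^{\vee})$ has a $\QQ$-basis indexed by finite multisets of such indecomposables, set-theoretically matching the skein basis.

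To compare products I would fix a total order on indecomposables refining a filtration by total dimension vector, compatible with the condition that $\Ext^1$ strictly decreases the filtration. Applying Proposition~\ref{prop_resolve} inductively, the ordered Hall product of indecomposables equals the class of their direct sum plus a $\QQ$-combination of strictly smaller isomorphism classes --- the PBW triangularity familiar from Ringel's theorem. The image under $\Phi$ of the ordered product of the corresponding arcs has the same triangular form, since each resolution of a crossing produced by stacking arcs in $S\times\RR$ corresponds, via Proposition~\ref{prop_resolve}, to a $\mathrm{Cone}$ extension enumerated by the Hall product. The transition matrix is therefore unitriangular, proving bijectivity. The main obstacle is the bookkeeping in this last step: aligning the planar geometry of crossing resolutions in $D^2$ with the $\Ext^1$ classes on the Hall side, and choosing a PBW order in which the triangularity is manifest. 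The planarity of $D^2$ and the combinatorics of the $A_n$ root system make this tractable, but it is where the detailed case analysis lives.
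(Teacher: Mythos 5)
Your overall architecture (spanning set of arc products in the skein, matched against the basis of isomorphism classes in the Hall algebra, using the multiplicativity already established) is the right one, but there are two genuine gaps. The first is in the description of the spanning set. The correct statement (Lemma~\ref{lem_disk_span}) is that the skein is spanned by \emph{ordered stacked products} $E_{i_1,j_1}\cdots E_{i_m,j_m}$ with $i_1+j_1\geq\ldots\geq i_m+j_m$, indexed by \emph{arbitrary} finite multisets of graded endpoint-pairs --- including pairs such as $\{E_{0,2},E_{1,3}\}$ whose endpoints interleave on $\partial D^2$ and which therefore admit no disjoint realization of their projections, yet must index a basis vector because $[E_{0,2}\oplus E_{1,3}]$ is a basis vector of the Hall algebra. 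Your condition ``admitting a disjoint realization'' either excludes these multisets (in which case your index set is strictly smaller than the set of isomorphism classes of objects and cannot ``set-theoretically match'' the Hall basis), or is vacuous because disjointness is meant in $D^2\times\RR$ --- but then a multiset does not determine a skein element until you fix the stacking order, since permuting the factors changes the link by skein-relation correction terms. The paper fixes this by ordering according to the slicing phase $i+j$ and checking that arcs with equal phase commute. Relatedly, the reduction ``using the skein relations iteratively'' is where the actual work lives and the mechanism you sketch does not terminate: relation \eqref{gs_1} trades a positive crossing for a negative one plus smoothings, so crossings cannot simply be resolved away. The paper goes through Proposition~\ref{prop_tangle_gen} (right cusp tangles, permutation braid, left cusp tangles, via Rutherford's cusp-pushing induction) and a further induction on the number of endpoints using the boundary relations. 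Finally, proving directly that your spanning set is a basis ``via a grading-respecting normal form'' is both hard and unnecessary: linear independence is exactly what the map to the Hall algebra buys you, and the paper never establishes it intrinsically.

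On the comparison step your PBW-unitriangularity plan would work, but it is heavier than needed and your invocation of Proposition~\ref{prop_resolve} there is misplaced --- that proposition concerns Maurer--Cartan elements under smoothing and is already consumed in the proof that $\Phi$ is a homomorphism (Theorem~\ref{thm_homomorphism}); once multiplicativity is known, no further geometry of crossing resolutions enters. The paper instead uses the slicing $\mc F_\phi$ with $\phi=(i+j)/(n+1)$, for which $\Ext^1$ vanishes in the relevant direction and each slice is semisimple, so the Hall product of the ordered indecomposables is \emph{exactly} a scalar multiple of the class of their direct sum: the transition matrix is diagonal, not merely unitriangular, and one writes down an explicit two-sided inverse $\Psi([E])=(q-1)^m E_{i_1,j_1}\cdots E_{i_m,j_m}$, with $\Phi\Psi=\mathrm{id}$ checked from the formula \eqref{Phi_formula} (the ordering forces $e(L)=0$ and $\mc{MC}(L)=\{0\}$) and $\Psi\Phi=\mathrm{id}$ deduced from surjectivity of $\Psi$.
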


See Theorem~\ref{thm_phi_disk} in the main text. 
By contrast: 

\begin{theorem}
Let $S=S^1\times [0,1]$ be the annulus, $N=\emptyset$, and $\eta$ the standard grading, then $\Phi$ is injective.
Thus, the graded Legendrian skein algebra of the annulus is isomorphic to a subalgebra of the Hall algebra of the bounded derived category of finite-dimensional $\KK[x^{\pm}]$-modules.
\end{theorem}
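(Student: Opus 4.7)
The plan is to produce a PBW-type basis of the skein algebra and track its image in the Hall algebra closely enough to establish linear independence. The approach is parallel in spirit to the Morton--Samuelson identification of the HOMFLY skein of the torus with a specialization of the Hall algebra of the elliptic curve; the annulus setting here is the open/punctured analogue.

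\emph{Step 1 (spanning set in the skein).} Using repeated application of the skein relations \eqref{gs_1}--\eqref{gs_3}, I would show that every graded Legendrian link in $S^1\times[0,1]\times\RR$ is equivalent in the skein to a disjoint union of embedded curves, each winding some nonzero integer number of times around the core (with a discrete grading shift coming from $\eta$). As in the classical HOMFLY skein of the annulus, one then checks that two ``primitive'' windings $q$-commute, so the skein algebra acquires the structure of a quantum polynomial algebra with PBW basis indexed by finitely supported functions from $\ZZ_{\neq 0}\times\ZZ$ to $\NN$.

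\emph{Step 2 (target side).} The category $\mc F^\vee$ of the annulus is equivalent to the bounded derived category of finite-dimensional $\KK[x^{\pm}]$-modules. Its Hall algebra $H$ decomposes as a restricted tensor product over closed points of $\GG_m$, each factor being a Hall--Littlewood-type algebra of torsion modules at that point. The algebra $H$ carries a $\ZZ$-grading by $K_0$-class (equivalently total dimension), and this matches, under $\Phi$, the $\ZZ$-grading of the skein algebra by signed winding number.

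\emph{Step 3 (primitive generators and PBW).} For a primitive cycle $c_n$ of winding number $n$, I would expand $\Phi(c_n)$ using the explicit formula of Subsection~\ref{subsec_skeinhall}, summing over rank-one local systems and Maurer--Cartan elements on a curve wound $n$ times around the core. The main claim is that, after regrouping according to spectral support on $\GG_m$, this sum assembles into a power-sum element (in the Hall--Littlewood sense) in each spectral factor of $H$. Distinct products of such power sums at different spectral points are linearly independent in the restricted tensor product, so $\Phi$ is injective on monomials in the primitive generators, and hence on the entire skein algebra by the PBW description from Step 1.

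\emph{Main obstacle.} The delicate point is Step 3: one must verify that the weighted sum of Maurer--Cartan and local-system contributions on a multiply-wound Legendrian does not partially cancel, but rather produces a genuine power-sum symmetric function, and that the $q$-commutation on the skein side matches the commutation of power sums across distinct spectral factors of $H$. This relies on the explicit form of $\Phi$ together with a careful count of augmentations on multi-covers of the core circle, and I expect this combinatorial/spectral matching to consume the bulk of the argument.
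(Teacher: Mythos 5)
Your overall architecture — find a spanning set of multiply-wound cycles, compute their images, and deduce linear independence from the structure of the Hall algebra of $D^b(\mathrm{Mod}_{\mathrm{fd}}(\KK[x^{\pm}]))$ — is the same as the paper's, but two of your steps have genuine gaps.

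First, in Step 1 the claim that two primitive windings ``$q$-commute'' is not correct for the \emph{Legendrian} skein of the annulus. The relation \eqref{gs_1} produces correction terms (the $\delta_{m,n}$ and $\delta_{m,n+1}$ terms, which introduce cusps) whenever the gradings at a crossing agree or differ by one, so commuting $C_k[n]$ past $C_l[n']$ is not a scalar rescaling; the algebra is genuinely noncommutative and the basis of Proposition~\ref{prop_annulus_skein_basis} is an \emph{ordered} PBW-type basis, not a quantum-torus basis. The spanning statement therefore cannot be waved through as a quantum polynomial algebra: the paper has to route it through the tangle generation result (Proposition~\ref{prop_tangle_gen}), a reduction to closures of permutation braids with sorted boundary, and finally Bigelow's basis of $H_n/[H_n,H_n]$ to express closures of degree-zero permutation braids in terms of the $C_k$.

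Second, and more seriously, Step 3 is exactly where your argument is missing its key idea, as you yourself flag. By Lemma~\ref{lem_cyclic_braid_comp}, $\Phi(C_k)$ is $(q-1)^{-1}$ times the sum over \emph{all} $k$-dimensional cyclic modules (companion matrices with invertible constant term), so each generator is smeared over every closed point of $\GG_m$; you cannot then speak of ``products of power sums at different spectral points'' being independent, because no single $\Phi(C_k)$ lives at a single spectral point, and products of these sums mix supports in a way that is hard to disentangle directly. The device that resolves this is to compose $\Phi$ with the projection $J^!$ onto the Hall algebra of the unipotent part $D^b(\mathrm{Mod}_{\mathrm{un}})$: a companion matrix is unipotent for exactly one choice of coefficients, giving a single Jordan block, so $J^!(\Phi(C_k))=(q-1)^{-1}[I_k]$, and after checking that $J^!$ is multiplicative on the relevant products (either both factors lie in the heart, or $\Ext^1$ between them vanishes) the ordered monomials land on the known basis of the classical Hall algebra. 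Without this projection, or some equivalent way of isolating a single spectral component, your ``combinatorial/spectral matching'' remains an unproved assertion rather than a proof.
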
 

This is Theorem~\ref{thm_annulus} in the main text.
The skein of the annulus is of particular interest because its elements correspond to various types of Legendrian satellite invariants.
The known relation between Legendrian satellite invariants and counts of representations of the Chekanov--Eliashberg differential algebra, see \cite{ng_rutherford, leverson_rutherford}, can presumably be extended or at least given additional justification by our result and its extension to the $\ZZ/n$-graded context. 

More generally, we propose the following conjecture.
\begin{conj}
The algebra homomorphism is injective for general surface $S$. 
\end{conj}
Work in progress by Ben Cooper and the author aims to prove this using gluing techniques for Hall algebras and skein algebras.

\subsection{Categorification}

In recent years, Fukaya category-type constructions have been increasingly applied to Legendrian knots~\cite{bourgeois_chantraine,shende_treumann_zaslow,nrssz}.
To a Legendrian knot $L$ in $\RR^3$ one assigns its \textit{augmentation category} $\mc C(L)_1$ whose objects are augmentations of the Chekanov--Eliasherg DGA.
This category has a geometric interpretation as (a rank one part of) the Fukaya--Seidel category with boundary condition $L$.
By a result of Ng--Rutherford--Shende--Sivek~\cite{nrss} the homotopy cardinality of $\mc C(L)_1$ is the graded ruling polynomial $R_L$.
Thus, the category $\mc C(L)_1$ is a kind of \textit{categorification} of the knot polynomial of $L$.
The full category $\mc C(L)$, which was defined in~\cite{shende_treumann_zaslow} in terms of constructible sheaves, should be thought of as categorifying the satellite invariants of $L$.

For more general contact threefolds, the generalization of the ruling polynomial is the image of the link in the skein.
By our main theorem this element in the skein is categorified by a functor $F:\mc C(L)_1\to \mc F^{\vee}$ from the category of the link to the Fukaya category in the sense that the pushforward along $F$ of the (weighted) counting measure on $\mc C(L)_1$ gives the element $\Phi(L)$ in $\mathrm{Hall}(\mc F^{\vee})$.
The following table summarizes this discussion.

\begin{center}
\begin{tabular}{c|c}
Classical & Categorical \\
\hline
ruling polynomial of $L$ & $\mc C(L)_1$ \\
satellite invariants of $L$ & $\mc C(L)$ \\
$\mathrm{Skein}(S\times \RR)$ & $\mc F^\vee(S)$ \\ 
$L\in\mathrm{Skein}(S\times\RR)$ & $F:\mc C(L)_1\to\mc F(S)$ 
\end{tabular}
\end{center}

\subsection{Further directions and speculation}

\subsubsection{$\ZZ/n$-grading}

In this work we restrict throughout to $\ZZ$-graded curves, but it seems plausible that everything extends to the $\ZZ/n$-graded case.
The skein relations \eqref{gs_1}, \eqref{gs_2}, \eqref{gs_3} make sense for $\ZZ/(2n)$-graded curves and with some tweaks one can get the odd case as well, see \cite{rutherford06}.
The boundary skein relations \eqref{gs_1b}, \eqref{gs_2b} do not immediately work in the periodic case though, and probably require more radical modification.
One the other hand, while $\ZZ/(2n)$-graded versions of the Fukaya category exist, defining the Hall algebra of say, a $\ZZ/2$-graded triangulated category is a famous problem.
Approaches of Bridgeland~\cite{bridgeland_quantum} and Kontsevich~\cite{kontsevich_hall} require additional structure and are thus not intrinsic to the periodic category itself.
At least in the cases where $N=\emptyset$, i.e. $S$ does not have any marked points on the boundary, the definition of the $\ZZ/(2n)$-graded skein is clear, and so a good test for any proposed definition of the Hall algebra of the $\ZZ/(2n)$-graded Fukaya category would be if the analogs of our results hold.

\subsubsection{The $q=1$ limit}

Our skein relations assign $(q-1)^{-1}$ to the unknot, and so do not immediately specialize to the classical limit $q=1$.
However, this could be seen as just a defect of our particular choice of model ($\ZZ[q^{\pm}]$-submodule) and we expect that a more suitable one can be found using ideas of Turaev~\cite{turaev}.
Ideally, we would like to have a definition of the ``Fukaya category of $S$ over $\FF_1$'' and its Hall algebra and compare this to the specialization of the correct model of the skein algebra. 
Perhaps the two are isomorphic.

One can be much more precise when replacing triangulated categories by their abelian subcategories.
Fukaya categories of surfaces often have bounded t-structures whose hearts are categories of representations of quivers with quadratic monomial relations~\cite{hkk}.
Categories of representations of quivers over $\FF_1$ as well as their Hall algebras can be defined, see the work of Szczesny~\cite{szczesny12} and also the very general approach of Dyckerhoff--Kapranov~\cite{dk_higher_segal}.
The idea is that the category of vector spaces over $\FF_1$ is the category of pointed sets $(X,x_0)$ and maps $(X,x_0)\to (Y,y_0)$ are functions $f:X\to Y$ with $f(x_0)=y_0$ and $f$ is injective away from the preimage of $y_0$.
In this way one gets a category which has many of the features of Abelian categories but where $\Hom$ is just a pointed set.
On the skein side one can restrict to those links which give objects in the heart of the chosen t-structure on the Fukaya category.
For this submodule one already has the correct model and the $q=1$ limit gives the above Hall algebra of the category of representations over $\FF_1$.
Extending this to the full triangulated category remains an intriguing problem.
The most basic question is what the right (for our purposes) notion of a ``triangulated category over $\FF_1$'' is.

\subsubsection{Higher dimensions}

The construction which assigns to a Legendrian link an element in the Hall algebra of the Fukaya category extends in principle to higher dimensional Liouville domains replacing $S$, though the definitions are less elementary.
Presumably one should also use a version of the Hall algebra based on cohomology instead of counting, as the moduli spaces of objects are no longer constructible unions of algebraic tori, but can have much more complicated motives.
We can define skein relations in a very implicit manner as those linear relations which hold among the images of links in the Hall algebra, then the skein algebra is by construction a subalgebra of the Hall algebra.
It is a priori not clear that these relations are generated by local ones.
It would be interesting to find a small set of generating relations in, say, the case of Legendrian surfaces.
The guiding philosophy is thus that
\[
Skein=Hall
\]
is a general phenomenon which we are studying here only in a relatively tame case.

\subsection{Outline}

Section~\ref{sec_algebra} provides background on $A_\infty$-categories and their Hall algebras.
We also discuss filtered $A_\infty$-categories with curvature and prove some basic results about them which are needed later.
Section~\ref{sec_fukaya} is on Fukaya categories of surfaces.
For our purposes we need to consider immersed curves (with Legendrian lift) together with local system and Maurer--Cartan element.
The main novelty in this section is the interplay between smoothing intersections and Maurer--Cartan elements, which provides the basis for proving that the skein relation \eqref{gs_1} holds in the Hall algebra.
Finally, in Section~\ref{sec_skein} we construct the homomorphism $\Phi$ from the Legendrian skein algebra to the Hall algebra and study its properties in special cases.
This section also contains some results about the skein of Legendrian tangles.

\subsection{Acknowledgments}
 
We thank Benjamin Cooper, Mikhail Kapranov, Ludmil Katzarkov, Maxim Kontsevich, Dan Rutherford, Peter Samuelson, and Vivek Shende for helpful discussions. 
Comments of an anonymous referee were very helpful in improving the text.
The author enjoyed the hospitality and perfect working conditions of the University of Vienna, IHES, and the University of Miami while writing this article.

\section{$A_\infty$-categories}
\label{sec_algebra}

This section contains the algebraic parts of the story.
In the first subsection we give a brief review of the language of $A_\infty$-categories and fix conventions.
Less standard material, on curved $A_\infty$-categories with $\RR$-filtered $\Hom$-spaces, is contained in Subsection~\ref{subsec_curved}.
Subsection~\ref{subsec_hc} is on homotopy cardinality and related notions in the context of $A_\infty$-categories.
Finally, in Subsection~\ref{subsec_hall} we review the definition of the Hall algebra of an extension closed $A_\infty$-category and discuss how certain slicings of the category give a tensor product decomposition of the Hall algebra.

\subsection{Definitions}
\label{subsec_def}

We will use the language of $A_\infty$-categories throughout, as these naturally appear in symplectic topology.
The purpose of this subsection is to review some basic definitions and fix notations and sign conventions, adopting those which are common in the Fukaya category literature, e.g. Fukaya--Oh--Ohta--Ono~\cite{FOOO} or Seidel~\cite{seidel08}.
For an introduction to $A_\infty$-categories see Keller~\cite{keller_a_infinity} and for a more thorough account Lef\`evre-Hasegawa~\cite{lefevre_hasegawa}.

All our categories will be small, or at least essentially small, and linear over a fixed field $\KK$.
An $A_\infty$\textbf{-category} $\mc A$ over $\KK$ is given by a set $\Ob(\mc A)$ of objects, a $\ZZ$-graded vector space $\Hom(A,B)$ for each pair of objects $A,B\in\Ob(\mc A)$, and structure maps
\[
m_n:\Hom(A_{n-1},A_n)\otimes\cdots\otimes\Hom(A_0,A_1)\to\Hom(A_0,A_n)
\]
of degree $2-n$, for each $n\ge 1$, satisfying the $A_\infty$-relations
\begin{equation} \label{A_infty_rels}
\sum_{i+j+k=n}(-1)^{\Vert a_k\Vert+\ldots+\Vert a_1\Vert}m_{i+1+k}(a_n,\ldots,a_{n-i+1},m_j(a_{n-i},\ldots,a_{k+1}),a_k,\ldots,a_1)=0
\end{equation}
where $\Vert a\Vert:=|a|-1$ is the degree in the bar resolution.

We will require $A_\infty$-categories to be \textbf{strictly unital} for convenience.
This means that there is a morphism $1_A\in\Hom^0(A,A)$ for each $A\in\Ob(\mc A)$ such that
\begin{gather*}
m_1(1_A)=0 \\
m_2(a,1_A)=(-1)^{|a|}m_2(1_A,a)=a,\quad a\in\Hom(A,A) \\
m_k(\ldots,1_A,\ldots)=0\text{ for }k\ge 3
\end{gather*}
Strictly unital $A_\infty$-categories with $m_k=0$ for $k\geq 3$ correspond to dg-categories via
\[
da=(-1)^{|a|}m_1(a),\qquad ab=(-1)^{|b|}m_2(a,b).
\]
Indeed, the first three $A_\infty$-relations correspond to $d^2=0$, the Leibniz rule, and associativity of the product.

\subsubsection{Twisted complexes}

There is a canonical way of enlarging an $A_\infty$-category to include extensions by any sequence of objects.
Let $\mc A$ be an $A_\infty$-category and $A_1,\ldots,A_n\in\Ob(\mc A)$.
An \textit{upper triangular deformation} of $A=A_1\oplus\cdots\oplus A_n$ or \textbf{twisted complex} is given by morphisms $\delta_{ij}\in\Hom^1(A_j,A_i)$, $i<j$, forming a strictly upper triangular matrix, $\delta$, such that the $A_\infty$ Maurer--Cartan equation
\begin{equation}
m_1(\delta)+m_2(\delta,\delta)+m_3(\delta,\delta,\delta)+\ldots+m_{n-1}(\delta,\ldots,\delta)=0
\end{equation}
holds.
Here, the structure maps $m_k$ are extended to matrices in the natural way, i.e.
\[
\left(m_k(\delta,\ldots,\delta)\right)_{i_0,i_k} := \sum_{i_0<i_1<\ldots<i_k}m_k\left(\delta_{i_0,i_1},\ldots,\delta_{i_{k-1},i_k}\right).
\]

Twisted complexes form an $A_\infty$-category, $\mathrm{Tw}(\mc A)$, which contains $\mc A$ as a full subcategory by mapping $A$ to its trivial deformation with $\delta=0$.
A morphism, $a$, of degree $d$ from $(A,\alpha)$ to $(B,\beta)$ is given by elements $a_{ij}\in\Hom^d(A_j,B_i)$.
Structure maps are given by ``inserting $\delta$'s everywhere'':
\begin{equation}\label{twisted_structure_maps}
\widetilde{m}_k(a_k,\ldots,a_1):=\sum_{n_0,\ldots,n_k\ge 0}m_{k+n_0+\ldots+n_k}(\underbrace{\delta_k,\ldots,\delta_k}_{n_k\text{ times}},a_k,\underbrace{\delta_{k-1},\ldots,\delta_{k-1}}_{n_{k-1}\text{ times}},a_{k-1},\ldots,a_1,\underbrace{\delta_0,\ldots,\delta_0}_{n_0\text{ times}})
\end{equation}
where $a_i\in\mathrm{Hom}((A_{i-1},\delta_{i-1}),(A_i,\delta_i))$.
Note that the sum above is finite since the $\delta_i$'s are strictly upper triangular.
The $A_\infty$ relations for the $\widetilde{m}_k$ follow from the $A_\infty$ relations for the $m_k$ and the Maurer--Cartan equations for the $\delta_i$'s.

Note that $\mathrm{Tw}(\mathrm{Tw}(\mc A))\cong \mathrm{Tw}(\mc A)$ by the natural equivalence of categories which combines several strictly upper-triangular matrices into one block-matrix.
In particular, $\mc C=\mathrm{Tw}(\mc A)$ is closed under extensions in the following sense. 
There is a zero-object and for every $A,B\in\Ob(\mc C)$ and $f\in\Hom^1(A,B)$ with $m_1(f)=0$ the twisted complex $A\xrightarrow{f} B$ is isomorphic to an object of $\mc C$.
If $\mc A$ is closed under shifts, then $\mathrm{Tw}(\mc A)$ is triangulated (closed under cones and shifts).

\subsection{Curved $A_\infty$-categories}
\label{subsec_curved}

In this subsection we discuss the formalism of curved $A_\infty$-categories.
A general deformation of an $A_\infty$-category has, in addition to the structure maps $m_n$, $n\geq 1$, \textit{curvature terms} $m_0(A)\in\Hom^2(A,A)$ satisfying a generalization of the usual $A_\infty$-equations, see \eqref{A_infty_rels_m0}.
Such categories typically do not have well defined homotopy categories, since $m_1^2\neq 0$, but there is a way of removing the curvature by ``recomputing'' the set of objects.
The new objects correspond to solutions to the $A_\infty$ Maurer--Cartan equation.
Since there are infinitely many terms involved, some topology is needed, and we will consider those coming from $\RR$-filtrations.

\subsubsection{$\RR$-filtrations}

By a \textbf{decreasing $\RR$--filtration} on a vector space $V$ we mean a collection of subspaces $V_{\geq\beta}\subset V$ for $\beta\in\RR$ such that 
\begin{enumerate}[1)]
\item
$V_{\geq\alpha}\supset V_{\geq\beta}$ for $\alpha\leq\beta$,
\item
$V_{\geq\beta}=\bigcap_{\alpha<\beta}V_{\geq\alpha}$ (semicontinuous),
\item
the set of $\beta\in\RR$ with $V_{\geq\beta}/V_{>\beta}\neq 0$, where $V_{>\beta}:=\bigcup_{\alpha>\beta}V_{\geq\alpha}$, is discrete in $\RR$,
\item
$\bigcap_{\beta}V_{\geq\beta}=0$ (separated),
\item 
$\bigcup_{\beta}V_{\geq\beta}=V$ (exhaustive),
\item
$\lim_{\leftarrow}V/V_{\geq\beta}=V$ (complete).
\end{enumerate}
An \textbf{$\RR$-filtered vector space} is a vector space $V$ with decreasing $\RR$-filtration as above, which gives a topology on $V$ as usual.

\subsubsection{Curvature terms}

A \textbf{curved $A_\infty$-category} $\mc C$ over a field $\KK$ is given by a set of objects $\Ob(\mc C)$, for each pair $X,Y\in\Ob(\mc C)$ an $\RR$-filtered vector space $\Hom(X,Y)$ over $\KK$ and structure maps 
\[
m_n:\Hom(A_{n-1},A_n)\otimes\cdots\otimes\Hom(A_0,A_1)\to\Hom(A_0,A_n)
\]
of degree $2-n$, for $n\ge 0$, satisfying the \textit{$A_\infty$-relations with curvature}
\begin{equation} \label{A_infty_rels_m0}
\sum_{i+j+k=n}(-1)^{\Vert a_k\Vert+\ldots+\Vert a_1\Vert}m_{i+1+k}(a_n,\ldots,a_{n-i+1},m_j(a_{n-i},\ldots,a_{k+1}),a_k,\ldots,a_1)=0
\end{equation}
where $i,j,k,n\geq 0$.
By slight abuse of notation we write $m_0$ or $m_0(X)$ for the image of $1$ under $m_0:\KK\to\Hom^2(X,X)$.
The first two $A_\infty$-relations with curvature are then
\begin{gather*}
m_1(m_0)=0 \\
m_1(m_1(a))+m_2(a,m_0)+(-1)^{\|a\|}m_2(m_0,a)=0
\end{gather*}
so in particular $m_1$ is in general not a differential.
Furthermore, we require $m_0(X)\in\Hom^2(X,X)_{>0}$ and
\begin{equation}
a_i\in\Hom(X_{i-1},X_{i})_{\geq\beta_i}\implies m_n(a_n,\ldots,a_1)\in\Hom(X_0,X_n)_{\geq\beta_1+\ldots+\beta_n}
\end{equation}
for $n\geq 1$, as well as the existence of strict units $1_X\in\Hom^0(X,X)_{\geq 0}$.

Let $\mc C$ be a  curved $A_\infty$-category.
A \textit{Maurer--Cartan element} or \textit{bounding cochain} for $X\in\Ob(\mc C)$ is a $\delta\in\Hom^1(X,X)_{>0}$ with
\begin{equation}\label{MC_eqn}
\sum_{k=0}^{\infty}m_k(\underbrace{\delta,\ldots,\delta}_{k\text{ times}})=0.
\end{equation}
where the sum converges since $\delta\in\Hom^1(X,X)_{\geq\epsilon}$ for some $\epsilon>0$ by our assumptions on $\RR$-filtrations, and structure maps are contracting.
Denote by $\mc{MC}(X)\subset \Hom^1(X,X)_{>0}$ the set of Maurer--Cartan elements (which could be empty).
Define $\widetilde{\mc C}$ to be the (filtered, uncurved) $A_\infty$-category whose objects are pairs $(X,\delta)$ with $X\in\Ob(\mc C)$ and $\delta\in\mc{MC}(X)$, morphisms
\[
\Hom_{\widetilde{\mc C}}((X,\delta),(Y,\gamma)):=\Hom_{\mc C}(X,Y)
\]
and structure maps $\widetilde{m}_k$ obtained by inserting the Maurer--Cartan elements as in \eqref{twisted_structure_maps}.

We will also consider the category $\widetilde{\mc C}_{\geq 0}$ which has the same objects as $\widetilde{\mc C}$, morphisms 
\[
\Hom_{\widetilde{\mc C}_{\geq 0}}(X,Y):=\Hom_{\widetilde{\mc C}}(X,Y)_{\geq 0}
\]
and structure maps are restrictions of those of $\widetilde{\mc C}$. 
Finally, the category $\mc C_0$ has the same objects as $\mc C$, morphisms 
\[
\Hom_{\mc C_0}(X,Y):=\Hom_{\mc C}(X,Y)_{\geq 0}/\Hom_{\mc C}(X,Y)_{> 0}
\]
and structure maps induced from $\mc C$.
We then have a diagram of uncurved $A_\infty$-categories and functors
\begin{equation}
\label{curved_zigzag}
\begin{tikzcd}
 & \widetilde{\mc C}_{\geq 0} \arrow[dl,"F"']\arrow[dr,"G"] & \\
\mc C_0 & & \widetilde{\mc C}
\end{tikzcd}
\end{equation}
The functor $F$ is given on objects by $(X,\delta)\mapsto X$, i.e. forgetting the Maurer--Cartan element, and on morphism is the quotient map 
\[
\Hom_{\mc C}(X,Y)_{\geq 0}\twoheadrightarrow\Hom_{\mc C}(X,Y)_{\geq 0}/\Hom_{\mc C}(X,Y)_{>0}
\]
while the functor $G$ is the identity on objects and the inclusion
\[
\Hom_{\mc C}(X,Y)_{\geq 0}\hookrightarrow\Hom_{\mc C}(X,Y)
\]
on morphisms.

\subsubsection{Transporting Maurer--Cartan elements}

Having defined curved $A_\infty$-categories, the next goal is to establish some properties of the functor $\widetilde{\mc C}_{\geq 0}\to\mc C_0$.
The following proposition is an Inverse Function Theorem-type result.

\begin{prop} \label{prop_isoreflect}
The functor $\widetilde{\mc C}_{\geq 0}\to\mc C_0$ is conservative: A closed map $f$ in $\widetilde{\mc C}_{\geq 0}$ is an isomorphism if and only if its reduction modulo $\Hom_{>0}$ in $\mc C_0$ is an isomorphism.
\end{prop}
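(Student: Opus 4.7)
The ``only if'' direction is immediate, since any (strict) $A_\infty$-functor preserves isomorphisms, and in particular $F$ sends isos to isos.

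For the converse, suppose $f:(X,\delta)\to(Y,\gamma)$ is closed in $\widetilde{\mc C}_{\geq 0}$ and $\bar f := F(f)$ is an iso in $H^0(\mc C_0)$. The plan is to build a cohomological inverse by successive approximation, exploiting the discreteness and completeness of the $\RR$-filtration. The crucial structural observation is that $\delta,\gamma\in\Hom_{>0}$, so every insertion of a Maurer--Cartan element in the formula \eqref{twisted_structure_maps} for $\widetilde{m}_k$ strictly raises the filtration degree. Thus, for $a_i\in\Hom_{\geq\beta_i}$, one has $\widetilde{m}_k(a_k,\ldots,a_1)\equiv m_k(a_k,\ldots,a_1)\pmod{\Hom_{>\beta_1+\cdots+\beta_k}}$, so $F$ identifies the associated graded $A_\infty$-structures with respect to the filtration.

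Since $[\bar f]$ is an isomorphism in $H^0(\mc C_0)$, pick a closed $\bar g\in\Hom^0_{\mc C_0}(Y,X)$ together with $\bar h\in\Hom^{-1}_{\mc C_0}(X,X)$ satisfying $m_2(\bar g,\bar f)-1_X=m_1(\bar h)$, and lift arbitrarily to $g_0,h_0$ in $\Hom_{\geq 0}$. By the structural observation, both $\widetilde{m}_1(g_0)$ and $\varepsilon_0 := \widetilde{m}_2(g_0,f)-1_X-\widetilde{m}_1(h_0)$ lie in $\Hom_{>0}$. Letting $0=\beta_0<\beta_1<\beta_2<\cdots$ be the jump values of the filtration, I inductively construct $g_n,h_n$ whose errors $\widetilde{m}_1(g_n),\varepsilon_n$ lie in $\Hom_{>\beta_n}$, by adjoining corrections $\Delta g\in\Hom^0_{>\beta_{n-1}}$ and $\Delta h\in\Hom^{-1}_{>\beta_{n-1}}$ that kill the obstruction on the graded piece $\mathrm{Gr}^{\beta_n}$. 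Discreteness gives $\beta_n\to\infty$, and completeness implies the limits $g:=\lim g_n$ and $h:=\lim h_n$ exist in $\Hom_{\geq 0}$, satisfying $\widetilde{m}_1(g)=0$ and $\widetilde{m}_2(g,f)-1_X=\widetilde{m}_1(h)$. Thus $[g]$ is a left homotopy inverse to $[f]$; a symmetric argument produces a right homotopy inverse, and the two coincide in $H^0(\widetilde{\mc C}_{\geq 0})$, giving the desired isomorphism.

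The main obstacle is the solvability of the correction equations at each inductive step. Each graded piece $\mathrm{Gr}^{\beta_n}\Hom(Y,-)$ carries a natural $\mc C_0$-bimodule structure inherited from $\mc C$, and the obstruction to correcting $(g_n,h_n)$ reduces modulo $\Hom_{>\beta_n}$ to an equation in this bimodule. Its solvability follows from the fact that post-composition by $\bar f$ induces a quasi-isomorphism on any such module --- a consequence of $[\bar f]$ being an iso in $H^0(\mc C_0)$ together with the general property that $A_\infty$-modules take isomorphisms to quasi-isomorphisms. Careful tracking of signs and the $A_\infty$-relations (using $\widetilde{m}_1(f)=0$ and $\widetilde{m}_1^2=0$) ensures that the errors $\widetilde{m}_1(g_n),\varepsilon_n$ represent $m_1$-closed classes in the graded piece at each stage, so that the obstruction genuinely lives in the cohomology group killed by $\bar f$.
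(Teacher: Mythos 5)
Your argument is correct, but it takes a different route from the paper's. The paper reduces conservativity to a statement about zero objects: an isomorphism is a map whose cone vanishes (formally adjoining $\mathrm{Cone}(f)$ as a two-step twisted complex if necessary), so it suffices to show that if $X$ is a zero object of $\mc C_0$ then $(X,\delta)$ is a zero object of $\widetilde{\mc C}_{\geq 0}$; this is done by building $h$ with $\widetilde m_1(h)=1_X$ by successive approximation, where the step-by-step solvability comes from a self-improving trick --- the partial solution $h$ itself furnishes the contracting homotopy via $x=m_2(x,\widetilde m_1(h))=-\widetilde m_1(\widetilde m_2(x,h))\bmod \Hom_{\geq 2\beta}$, so no external acyclicity input is needed. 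You instead run a two-variable Newton iteration directly on a homotopy inverse $g$ and the homotopy $h$, and the solvability of each corrective step rests on the statement that pre-composition with $f$ induces a quasi-isomorphism on the graded pieces $\mathrm{Gr}^{\beta}\Hom(-,X)$, viewed as $A_\infty$-modules over $\mc C_0$. That input is standard and your coupled obstruction analysis (the error in the $g$-equation is closed, its image under $\varphi$ is exact because it equals $\pm\widetilde m_1(\varepsilon)$, hence it is exact by injectivity of $\varphi$ on cohomology, and then surjectivity handles the $h$-equation) does go through; it is in fact essentially the mechanism the paper deploys for the \emph{isomorphism lifting} property in Proposition~\ref{prop_isolift}, so you have in effect proved conservativity by the lifting-property method. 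What the paper's route buys is economy --- no appeal to module theory over $\mc C_0$ and a one-variable iteration --- at the cost of formally adjoining a cone; your route avoids cones and makes the role of $\bar f$ being invertible in $\mc C_0$ more transparent, at the cost of invoking (or proving) the quasi-isomorphism property of $\varphi$ on graded pieces.
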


A note on terminology: A closed morphism $f\in\Hom^0(X,Y)$ in an $A_\infty$-category is called an \textbf{isomorphism} if its image in the homotopy category, i.e. in $\mathrm{Ext}^0(X,Y)$, is an isomorphism. 
This is equivalent to $f$ having an inverse up to homotopy.

\begin{proof}
A map is an isomorphism if and only if its cone is a zero object.
In case $\mathrm{Cone}(f)$ does not exist in $\widetilde{\mc C}$, we can formally add it as a two-step twisted complex.
Thus it suffices to show that if $X\in\Ob\left(\mc C_0\right)$ is a zero object then $(X,\delta)\in\Ob(\widetilde{\mc C}_{\geq 0})$ is a zero object for any $\delta\in\mc{MC}(X)$.
To show this, we construct a series converging to an element $h\in\Hom_{\mc C}^{-1}(X,X)_{\ge 0}$ with $\widetilde{m}_1(h)=1_X$.

Suppose we already have an $h\in \Hom_{\mc C}^{-1}(X,X)_{\geq 0}$ such that
\begin{equation}\label{id_boundary}
\widetilde{m}_1(h)=1_X \mod \Hom_{\geq \beta}
\end{equation}
then the goal is to find $h'\in \Hom_{\mc C}^{-1}(X,X)_{\geq\beta}$ such that $h+h'$ solves \eqref{id_boundary}, but modulo terms in $\Hom_{\geq 2\beta}$, i.e.
\[
\widetilde{m}_1(h')=1_X-\widetilde{m}_1(h) \mod \Hom_{\geq 2\beta}
\]
The right-hand side of the above equation is in $\Hom_{\geq \beta}$, but also $\widetilde{m}_1$-closed, so existence of $h'$ follows from acyclicity of the complex $\Hom_{\geq\beta}/\Hom_{\geq 2\beta}$.
To see this, suppose $x\in\Hom_{\geq\beta}^k$ such that $m_1(x)\in\Hom_{\geq 2\beta}^{k+1}$, then
\begin{equation}
x=m_2(x,1)=m_2(x,m_1(h))=-m_1(m_2(x,h))\qquad \mod \Hom_{\geq 2\beta}
\end{equation}
so $x$ is a boundary. 
To finish the proof, an inductive argument and completeness of $\Hom_{\mc C}(X,X)$ give the desired $h$.
\end{proof}

The following proposition allows us to transport Maurer--Cartan elements along isomorphisms in $\mc C_0$.

\begin{prop} \label{prop_isolift}
The functor $F:\widetilde{\mc C}_{\geq 0}\to\mc C_0$ has the isomorphism lifting property: If $X,Y\in\Ob(\mc C)$, $\delta\in\mc{MC}(X)$, and $f_0\in\Hom_{\mc C_0}(X,Y)$ is an isomorphism, then there exist a $\gamma\in\mc{MC}(Y)$ and $f\in \Hom_{\widetilde{\mc C}_{\geq 0}}((X,\delta),(Y,\gamma))$ which is an isomorphism with $f=f_0 \mod \Hom_{>0}$.
\end{prop}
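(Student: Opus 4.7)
The plan is to construct $\gamma$ and $f$ simultaneously by successive approximation across the $\RR$-filtration. The key observation is that a pair $(\gamma,f)$, with $\gamma\in\mc{MC}(Y)$ and $f\in\Hom^0((X,\delta),(Y,\gamma))_{\geq 0}$ closed, corresponds to a single Maurer--Cartan element for the curved ``twisted complex'' on $X\oplus Y$ with $X$-diagonal block $\delta$, $Y$-diagonal block $\gamma$, and off-diagonal block $f$. With the appropriate degree convention, the block-wise expansion of $MC(\Delta)=0$ decouples into three scalar equations: $MC(\delta)=0$ (given), $\widetilde{m}_1(f)=0$ (the twisted differential using $\delta$ on the input and $\gamma$ on the output), and $MC(\gamma)=0$. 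It therefore suffices to produce such a total MC element $\Delta$ whose off-diagonal entry reduces to $f_0$ modulo $\Hom_{>0}$.

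Start from $\Delta^{(0)}$ with $\gamma^{(0)}=0$ and $f^{(0)}=\tilde{f}$ any chosen lift of $f_0$ to $\Hom^0(X,Y)_{\geq 0}$. Since $\delta$, $m_0(X)$, $m_0(Y)\in\Hom_{>0}$ and $f_0$ is closed in $\mc C_0$, the initial error $E^{(0)}:=MC(\Delta^{(0)})$ already lies in $\Hom_{\geq\mu_1}$, where $\mu_1>0$ is the smallest positive filtration jump. Inductively, suppose we have $\Delta^{(n)}$ of the prescribed shape with $E^{(n)}:=MC(\Delta^{(n)})\in\Hom_{\geq\beta_n}$. I seek a correction $\Delta'\in\Hom^1_{\geq\beta_n}$ affecting only the $\gamma$- and $f$-entries, so that $\Delta^{(n+1)}:=\Delta^{(n)}+\Delta'$ has error in filtration strictly above $\beta_n$. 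Modulo $\Hom_{>\beta_n}$ this reduces to the linear equation
\[
\widetilde{m}_1^{\Delta^{(n)}}(\Delta')=-E^{(n)},
\]
where $\widetilde{m}_1^{\Delta^{(n)}}$ is the differential on $\mathrm{End}(X\oplus Y)$ induced by $\Delta^{(n)}$. The $A_\infty$-Bianchi identity $\widetilde{m}_1^{\Delta^{(n)}}(MC(\Delta^{(n)}))=0$ (which holds for arbitrary degree-one $\Delta^{(n)}$ as a formal consequence of the $A_\infty$-relations) guarantees that $E^{(n)}$ is $\widetilde{m}_1^{\Delta^{(n)}}$-closed.

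The main obstacle is verifying that the obstruction class $[E^{(n)}]$ vanishes at each step. The point is that modulo $\Hom_{>0}$ the deformation $\Delta^{(n)}$ reduces to the purely off-diagonal deformation with single entry $f_0$ (because $\delta$ and $\gamma^{(n)}$ both lie in $\Hom_{>0}$), so the induced differential on the associated graded of $\mathrm{End}(X\oplus Y)$ at level $\beta_n$ is precisely the $\mc C_0$-differential of $\mathrm{End}_{\mc C_0}(\mathrm{Cone}(f_0))$ with coefficients in the graded piece at level $\beta_n$. Since $f_0$ is an isomorphism in $\mc C_0$, the object $\mathrm{Cone}(f_0)\in\mathrm{Tw}(\mc C_0)$ is a zero object and its endomorphism complex is therefore acyclic---exactly the same mechanism as in the proof of Proposition~\ref{prop_isoreflect}. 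Hence $[E^{(n)}]=0$, the correction $\Delta'$ exists, and the new error lies in a strictly larger filtration degree $\beta_{n+1}$. The discreteness condition on filtration jumps forces $\beta_n\to\infty$, and completeness of the Hom spaces ensures that the sequence $\Delta^{(n)}$ converges to a genuine MC element $\Delta$. Its entries yield $\gamma\in\mc{MC}(Y)$ and a closed $f\in\Hom_{\widetilde{\mc C}_{\geq 0}}((X,\delta),(Y,\gamma))$ with $f\equiv f_0\pmod{\Hom_{>0}}$; finally, Proposition~\ref{prop_isoreflect} applied to $f$ upgrades it to an isomorphism.
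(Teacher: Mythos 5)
Your proposal follows essentially the same strategy as the paper: successive approximation over the $\RR$-filtration, linearization of the Maurer--Cartan equation at each step, and killing the obstruction using the fact that $f_0$ is an isomorphism in $\mc C_0$; the repackaging of $(\delta,f,\gamma)$ as a single MC element on an upper-triangular deformation of $X\oplus Y$ is a pleasant reformulation but not a different argument. The one place where your write-up does not close is the obstruction step. The error $E^{(n)}$ and the allowed correction $\Delta'$ live only in the $(X,Y)$- and $(Y,Y)$-blocks (the $(X,X)$-entry must stay equal to $\delta$ and the $(Y,X)$-entry must stay zero to preserve the decoupling), whereas the complex you invoke, $\mathrm{End}_{\mc C_0}(\mathrm{Cone}(f_0))$, contains all four blocks. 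Acyclicity of that full endomorphism complex only produces a primitive of $E^{(n)}$ with possible components in the forbidden $(X,X)$- and $(Y,X)$-blocks, so ``$\mathrm{Cone}(f_0)$ is a zero object, hence $[E^{(n)}]=0$'' does not by itself yield an admissible $\Delta'$. What you actually need is acyclicity of the two-block subcomplex, which is (up to shift) the cone of the chain map $\mathrm{End}(Y)\to\Hom(X,Y)$, $e\mapsto \widetilde m_2(e,f)$, on the relevant graded pieces; this is acyclic because post-composition with an isomorphism is a homotopy equivalence. That is precisely the content of the paper's proof, which introduces the map $\varphi(x)=\widetilde m_2(x,f)$, notes it is a homotopy equivalence on $\Hom_{\geq\beta}/\Hom_{\geq 2\beta}$ since $f$ is invertible up to $\Hom_{\geq\beta}$, and then solves the two linearized equations (first for the $(Y,Y)$-component $\gamma''$, then for $\gamma'''$ and $f'$). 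With the obstruction complex corrected in this way your argument goes through; the remaining ingredients (the Bianchi identity for $MC(\Delta^{(n)})$, convergence via completeness and discreteness of the filtration jumps, and the final appeal to Proposition~\ref{prop_isoreflect}) are all sound, and the finiteness of the sums defining $MC(\Delta)$ despite $f\in\Hom_{\geq 0}\setminus\Hom_{>0}$ is guaranteed by upper-triangularity, as in the paper.
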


\begin{proof}
The idea is to construct a countable sequence of increasingly better approximations of $f$ and $\gamma$ and make use of completeness of the filtrations on $\Hom$-spaces.
By Proposition~\ref{prop_isoreflect} it suffices to ensure that $f$ is closed and $f=f_0\mod \Hom_{>0}$ --- such $f$ is then automatically an isomorphism.

Suppose we have already found $\gamma\in\Hom^1(Y,Y)_{>0}$ and $f\in\Hom^0(X,Y)_{\geq 0}$ such that
\begin{gather*}
\widetilde{m}_0(Y):=\sum_im_i(\underbrace{\gamma,\ldots,\gamma}_{i\text{ times}})=0\mod\Hom_{\geq\beta} \\
\widetilde{m}_1(f):=\sum_{i,j}m_{i+1+j}(\underbrace{\gamma,\ldots,\gamma}_{i\text{ times}},f,\underbrace{\delta,\ldots,\delta}_{j\text{ times}})=0 \mod\Hom_{\geq \beta} 
\end{gather*}
and $f$ is invertible up to terms in $\Hom_{\geq\beta}$.
We want to find $\gamma'\in\Hom^1(Y,Y)_{\geq \beta}$ and $f'\in\Hom^1(X,X)_{\geq \beta}$ such that $\gamma+\gamma'$ and $f+f'$ solve the above equations, but modulo terms in $\Hom_{\geq 2\beta}$, not just $\Hom_{\geq \beta}$.
Modulo $\Hom_{\geq 2\beta}$, the nonlinear terms in $\gamma'$ and $f'$ vanish, and we are left to solve
\begin{gather} \label{lift_eq1}
\widetilde{m}_0(Y)+\widetilde{m}_1\left(\gamma'\right)=0\mod \Hom_{\geq 2\beta} \\
 \label{lift_eq2}
\widetilde{m}_1(f)+\widetilde{m}_1\left(f'\right)+\widetilde{m}_2\left(\gamma',f\right)=0\mod\Hom_{\geq 2\beta}.
\end{gather}

By assumption, $f$ has a homotopy inverse $g$ up to terms in $\Hom_{\geq \beta}$ which implies that the induced map
\begin{equation*}
\varphi:\frac{\Hom(Y,Y)_{\geq\beta}}{\Hom(Y,Y)_{\geq 2\beta}}\to \frac{\Hom(X,Y)_{\geq \beta}}{\Hom(X,Y)_{\geq 2\beta}},\qquad x\mapsto \widetilde{m}_2(x,f)
\end{equation*}
is a homotopy equivalence of chain complexes with homotopy inverse $y\mapsto \widetilde{m}_2(y,g)$.
Now, since the $A_\infty$-equations hold for $\widetilde{m}_n$, we have in particular that $\widetilde{m}_1(\widetilde{m}_0(Y))=0$ and $\widetilde{m}_2(\widetilde{m}_0(Y),f)=\widetilde{m}_1(\widetilde{m}_1(f))$, i.e. $\varphi(\widetilde{m}_0(Y))$ is a boundary. 
Hence, since $\varphi$ is a chain homotopy equivalence, there is a $\gamma''$ which solves \eqref{lift_eq1}.
Furthermore, $\widetilde{m}_1(f)+\widetilde{m}_2\left(\gamma'',f\right)$ is then closed $\mod\Hom_{\geq 2\beta}$, so again using the fact that $\varphi$ is a chain homotopy equivalence we can find a closed $\gamma'''$ and $f'$ such that
\begin{equation*}
\widetilde{m}_1(f)+\widetilde{m}_1\left(f'\right)+\widetilde{m}_2\left(\gamma''+\gamma''',f\right)=0\mod\Hom_{\geq 2\beta}
\end{equation*}
thus $\gamma':=\gamma''+\gamma'''$ solves both equations \eqref{lift_eq1} and \eqref{lift_eq2}.
\end{proof}

\subsubsection{Gauge equivalence}

Given $X\in \Ob(\mc C)$ there is a sort of gauge group action on the set of Maurer--Cartan elements $\mc{MC}(X)$.
The analog of the gauge group is
\[
\mc G_X:=\left\{1+x\mid x\in\Hom^0(X,X)_{>0}\right\}
\]
on which $m_2$ gives a not necessarily associative composition.
Also consider for $\delta,\delta'\in\Hom^1(X,X)_{>0}$ the set
\begin{equation*}
I(\delta,\delta'):=\left\{1+x\mid x\in\Hom^0((X,\delta),(X,\delta'))_{>0},\widetilde{m}_1(1+x)=0\right\}
\end{equation*}
which, if $\delta,\delta'\in\mc{MC}(X)$, is the set of isomorphisms $(X,\delta)\to(X,\delta')$ which map to $1$ in $\mc C_0$.
We say that $\delta,\delta'\in\Hom^1(X,X)_{>0}$ are \textbf{gauge equivalent} if $I(\delta,\delta')\neq \emptyset$ and write
$\mc{MC}(X)/\mc G_X$ for the set of gauge equivalence classes in $\mc{MC}(X)$.

\begin{lemma}\label{lem_gauge_action}
Let $\delta\in\Hom^1(X,X)_{>0}$, $1+x\in \mc G_X$, then there is a unique $\delta'\in\Hom^1(X,X)_{>0}$ such that $1+x\in I(\delta,\delta')$ and furthermore $\delta'\in\mc{MC}(X)$ if and only if $\delta\in\mc{MC}(X)$.
\end{lemma}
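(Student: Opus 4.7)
The plan is to interpret the equation $\widetilde{m}_1(1+x) = 0$ defining $I(\delta,\delta')$ as an implicit fixed-point equation for $\delta'$ that can be solved by iteration using completeness of the $\RR$-filtration, and then to deduce the Maurer--Cartan preservation clause from the curved $A_\infty$-relations applied to the closed element $1+x$ inside the twisted category.

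First I would expand $\widetilde{m}_1(1+x) = \widetilde{m}_1(1_X) + \widetilde{m}_1(x)$. Strict unitality of $\mc C$ forces every $m_k$-contribution with $k \geq 3$ in $\widetilde{m}_1(1_X)$ to vanish, and only the two $m_2$-terms $m_2(\delta',1_X)$ and $m_2(1_X,\delta)$ survive, adding up to $\delta' - \delta$ with the sign convention of Subsection~\ref{subsec_def}. The equation $\widetilde{m}_1(1+x) = 0$ thus rearranges to
\[
\delta' \;=\; \delta \;-\; \widetilde{m}_1(x),
\]
where the right-hand side still depends on $\delta'$ through the insertions of $\delta'$ in $\widetilde{m}_1(x)$. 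However, every such dependence goes through some $m_k$ with $k \geq 2$ into which at least one copy of $\delta' \in \Hom_{>0}$ has been inserted, so varying $\delta'$ by an element in $\Hom_{\geq \beta}$ only changes the right-hand side by something in strictly higher filtration. Iterating and invoking the discreteness and completeness assumptions on the $\RR$-filtration, one obtains a unique $\delta' \in \Hom^1(X,X)_{>0}$ solving the equation, with $\delta' \equiv \delta$ modulo suitably positive filtration. This establishes the existence and uniqueness part of the lemma.

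For the Maurer--Cartan clause I would invoke the curved $A_\infty$-relations for the twisted structure maps $\widetilde{m}_k$, where
\[
\widetilde{m}_0(X,\delta) := \sum_{k \geq 0} m_k(\delta,\ldots,\delta)
\]
is the induced curvature on $(X,\delta)$, and similarly for $\delta'$. Applied to the degree-zero element $1+x$, the arity-one curved relation reads
\[
\widetilde{m}_1\bigl(\widetilde{m}_1(1+x)\bigr) + \widetilde{m}_2\bigl(\widetilde{m}_0(X,\delta'),\, 1+x\bigr) - \widetilde{m}_2\bigl(1+x,\, \widetilde{m}_0(X,\delta)\bigr) = 0.
\]
Since $\widetilde{m}_1(1+x) = 0$ by construction, this collapses to a direct relation between the two curvatures. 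If $\delta \in \mc{MC}(X)$ then $\widetilde{m}_0(X,\delta) = 0$, so $\widetilde{m}_2(\widetilde{m}_0(X,\delta'),\, 1+x) = 0$. By strict unitality, $\widetilde{m}_2(-, 1+x)$ acts as the identity plus contributions that strictly raise the filtration (again because $x,\delta,\delta' \in \Hom_{>0}$), so the same contraction argument as in the first step forces $\widetilde{m}_0(X,\delta') = 0$, i.e.\ $\delta' \in \mc{MC}(X)$. The converse is symmetric: by Proposition~\ref{prop_isoreflect} the morphism $1+x$ is invertible in $\widetilde{\mc C}_{\geq 0}$ with inverse necessarily of the form $1+y \in I(\delta',\delta)$, so the same argument with the roles reversed yields $\delta \in \mc{MC}(X)$ whenever $\delta' \in \mc{MC}(X)$.

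The main obstacle, as I see it, is purely bookkeeping: verifying that the leading pieces of $\widetilde{m}_1(1_X)$ and of $\widetilde{m}_2(-,1+x)$ really take the asserted form with correct signs, and checking that all remaining contributions genuinely land in strictly higher filtration. These verifications are mechanical once the $A_\infty$ and strict-unitality conventions are pinned down, after which both solvability of the fixed-point equation and Maurer--Cartan preservation follow from the same completeness-plus-contraction principle applied twice.
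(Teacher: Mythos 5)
Your argument is essentially the paper's: the same rearrangement $\widetilde{m}_1(1+x)=\delta'-\delta+\widetilde{m}_1(x)=0$ solved as a fixed-point equation by iteration (the paper phrases the contraction via $x\in\Hom_{\geq\epsilon}$ for a fixed $\epsilon>0$, which is the cleaner way to see the uniform filtration gain than attributing it to $\delta'\in\Hom_{>0}$), and the same curved $A_\infty$ identity
\[
0=\widetilde{m}_1\bigl(\widetilde{m}_1(1+x)\bigr)=\widetilde{m}_2\bigl(\widetilde{m}_0(X,\delta'),1+x\bigr)-\widetilde{m}_2\bigl(1+x,\widetilde{m}_0(X,\delta)\bigr)
\]
followed by the separatedness bootstrap for the Maurer--Cartan clause.

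The one step that does not work as written is the converse direction. You invoke Proposition~\ref{prop_isoreflect} to invert $1+x$, but that proposition concerns morphisms of $\widetilde{\mc C}_{\geq 0}$, whose objects are pairs $(X,\delta)$ with $\delta\in\mc{MC}(X)$; in the converse you only know $\delta'\in\mc{MC}(X)$, so $(X,\delta)$ is not yet known to be an object of that category and the appeal is circular. (Moreover the proposition only produces a homotopy inverse, not a strict closed element of the form $1+y\in I(\delta',\delta)$.) No inverse is needed: when $\widetilde{m}_0(X,\delta')=0$ the displayed identity gives $\widetilde{m}_2(1+x,\widetilde{m}_0(X,\delta))=0$, hence $\widetilde{m}_0(X,\delta)=\pm\widetilde{m}_2(x,\widetilde{m}_0(X,\delta))$, and the same contraction-plus-separatedness argument forces $\widetilde{m}_0(X,\delta)=0$. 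This direct symmetric run of the curvature identity is what the paper's ``the reverse direction is similar'' amounts to.
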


\begin{proof}
By definition $1+x\in I(\delta,\delta')$ if and only if
\begin{equation*}
\widetilde{m}_1(1+x)=\delta'-\delta+\widetilde{m}_1(x)=0
\end{equation*}
which we write as
\begin{equation*}
\delta'=\delta-\sum_{i,j\geq 0}m_{i+1+j}(\underbrace{\delta',\ldots,\delta'}_{i\text{ times}},x,\underbrace{\delta,\ldots,\delta}_{j\text{ times}}).
\end{equation*}
This can be used to inductively solve for $\delta'$, since $x\in\Hom^0(X,X)_{\geq\epsilon}$ for some $\epsilon>0$, thus if $\delta'$ has been determined $\mod \Hom_{\geq\beta}$, then the right hand side is determined $\mod\Hom_{\geq\beta+\epsilon}$.
It is also clear that $\delta'$ is uniquely determined by $x$ and $\delta$.

Suppose $\delta\in\mc{MC}(X)$, i.e. $\widetilde{m}_0(X,\delta)=0$, then
\begin{equation*}
0=\widetilde{m}_1\left(\widetilde{m}_1(1+x)\right)=\widetilde{m}_2\left(\widetilde{m}_0(X,\delta'),1+x\right)-\widetilde{m}_2\left(1+x,\widetilde{m}_0(X,\delta)\right)
\end{equation*}
hence
\begin{equation*}
\widetilde{m}_0(X,\delta')=-\widetilde{m}_2\left(\widetilde{m}_0(X,\delta'),x\right)
\end{equation*}
so again using the fact that $x\in\Hom^0(X,X)_{\geq\epsilon}$ this implies that if $\widetilde{m}_0(X,\delta')\in\Hom_{\geq\lambda}$ then $\widetilde{m}_0(X,\delta')\in\Hom_{\geq\lambda+\epsilon}$, so $\widetilde{m}_0(X,\delta')=0$ by separatedness.
The reverse direction is similar.
\end{proof}

Note that as a consequence of the above Lemma we have 
\begin{equation*}
\mc G_X\cong \bigsqcup_{\delta'\sim\delta}I(\delta,\delta')
\end{equation*}
for any $\delta\in\mc{MC}_X$, where $\delta'\sim\delta$ means gauge equivalence.

\begin{lemma}\label{lem_gauge_iso}
Let $\delta,\delta',\delta''\in\mc{MC}(X)$ and $1+f\in I(\delta,\delta')$, then the map
\begin{equation*}
I(\delta',\delta'')\to I(\delta,\delta''),\qquad x\mapsto\widetilde{m}_2(x,1+f)
\end{equation*}
is an isomorphism.
Similarly, the map
\begin{equation*}
I(\delta'',\delta)\to I(\delta'',\delta'),\qquad x\mapsto\widetilde{m}_2(1+f,x)
\end{equation*}
is an isomorphism.
\end{lemma}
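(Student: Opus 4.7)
The plan is to prove both statements by a direct filtration-theoretic argument, leveraging unit axioms and the quadratic $A_\infty$-relation. I will focus on the first map $\Phi_f(x) = \widetilde{m}_2(x, 1+f)$, as the second is entirely analogous with left/right swapped.

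First I would unpack $\Phi_f$. Writing $x = 1 + y$ with $y \in \Hom^0((X,\delta'),(X,\delta''))_{>0}$, strict unitality together with $\widetilde{m}_2(1, 1+f) = 1+f$ and $\widetilde{m}_2(y, 1) = y$ yields
\[
\widetilde{m}_2(1+y, 1+f) = 1 + f + y + \widetilde{m}_2(y, f).
\]
Thus, writing the target as $1 + z$ with $z \in \Hom^0_{>0}$, the set-map $\Phi_f$ is equivalent to the assignment $y \mapsto z = f + y + \widetilde{m}_2(y, f)$.

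Next I would establish bijectivity as a map of sets between $\Hom^0((X,\delta'),(X,\delta''))_{>0}$ and itself. Since $f \in \Hom_{\geq \epsilon}$ for some $\epsilon > 0$, the operator $y \mapsto \widetilde{m}_2(y, f)$ sends $\Hom_{\geq \alpha}$ into $\Hom_{\geq \alpha + \epsilon}$. The fixed-point equation $y = (z - f) - \widetilde{m}_2(y, f)$ can then be solved uniquely by successive approximation, converging by completeness of the $\RR$-filtration on $\Hom$. The same contraction argument shows that the linear map $w \mapsto \widetilde{m}_2(w, 1+f) = w + \widetilde{m}_2(w, f)$ is injective on $\Hom^{\bullet}_{>0}$: a solution of $w + \widetilde{m}_2(w,f) = 0$ in $\Hom_{\geq \alpha}$ must lie in $\Hom_{\geq \alpha + \epsilon}$, hence in $\bigcap_\beta \Hom_{\geq \beta} = 0$.

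The remaining point is that this bijection restricts to one between $I(\delta', \delta'')$ and $I(\delta, \delta'')$, i.e.\ that the condition $\widetilde{m}_1(x) = 0$ matches up with $\widetilde{m}_1(\Phi_f(x)) = 0$. For this I would invoke the quadratic $A_\infty$-relation (noting $\|1+f\| = -1$):
\[
\widetilde{m}_1(\widetilde{m}_2(x, 1+f)) + \widetilde{m}_2(\widetilde{m}_1(x), 1+f) - \widetilde{m}_2(x, \widetilde{m}_1(1+f)) = 0.
\]
Since $1+f \in I(\delta, \delta')$ gives $\widetilde{m}_1(1+f) = 0$, closedness of $\Phi_f(x)$ is equivalent to $\widetilde{m}_2(\widetilde{m}_1(x), 1+f) = 0$. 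By the injectivity statement above (applied to $w = \widetilde{m}_1(x) \in \Hom^1_{>0}$), this forces $\widetilde{m}_1(x) = 0$. The converse direction is immediate from the same identity. The proof of the second map is identical, using the other branch of the same $A_\infty$-relation.

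The hard part, insofar as there is one, is purely bookkeeping: confirming that the degrees, signs in the $A_\infty$-relation, and filtration bounds all line up, and checking that the iterative construction of $y$ really lives in $\Hom_{>0}$ (not merely $\Hom_{\geq 0}$) — both of which follow from $f, y, z \in \Hom_{>0}$ being strictly positive in the filtration. No deeper input than the completeness and contracting property of $\widetilde{m}_2(-, f)$ is needed.
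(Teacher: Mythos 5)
Your proof is correct and follows essentially the same route as the paper: solve $\widetilde{m}_2(x,1+f)=y$ recursively via the fixed-point equation $x=y-\widetilde{m}_2(x,f)$, using that $f\in\Hom_{\geq\epsilon}$ makes the correction term contracting and that the filtration is complete and separated. The paper states the transfer of the conditions $x=1\bmod\Hom_{>0}$ and $\widetilde{m}_1(x)=0$ without elaboration; your use of the quadratic $A_\infty$-relation together with injectivity of $w\mapsto\widetilde{m}_2(w,1+f)$ is exactly the justification left implicit there.
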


\begin{proof}
Suppose $\widetilde{m}_2(x,1+f)=y$, then
\begin{equation*}
x=y-\widetilde{m}_2(x,f)
\end{equation*}
which allows us to recursively solve for $x$ in terms of $y$, since $f\in\Hom_{\geq\epsilon}$ for some $\epsilon>0$.
Moreover, if $y=1\mod\Hom_{>0}$ then $x=1\mod\Hom_{>0}$ and if $\widetilde{m}_1(y)=0$ then $\widetilde{m}_1(x)=0$.
\end{proof}

\subsection{Homotopy cardinality}
\label{subsec_hc}

We begin with some remarks to put the definitions in this subsection into context.
Suppose $X$ is a space with $\pi_k(X)$ finite for all $k\geq 0$ and trivial for $k\gg 0$.
The \textit{homotopy cardinality} of $X$ is
\begin{equation}\label{homotopy_card}
\sum_{x\in\pi_0(X)}\prod_{k=1}^{\infty}\left|\pi_k(X,x)\right|^{(-1)^k}
\end{equation}
introduced in \cite{baez_dolan}.
By the homotopy hypothesis, homotopy types of spaces correspond to equivalence classes of $\infty$-groupoids.
A higher category $\mc C$ has an $\infty$-groupoid $\mathcal{I}(\mc C)$ of isomorphism, so one can, under finiteness conditions, ``count'' objects of $\mc C$ using \eqref{homotopy_card}.
In particular if $\mc C$ is a dg- or $A_\infty$-category, then 
\[
\pi_1(\mathcal I(\mc C),X)=\mathrm{Aut}(X)\subset \mathrm{Ext}^0(X,X),\qquad
\pi_{k+1}(\mathcal I(\mc C),X)=\mathrm{Ext}^{-k}(X,X),k\geq 1
\]
see \cite{toen_vaquie} for the case of dg-categories.
Since we are not interested here in the actual space $\mathcal I(\mc C)$ but only its homotopy cardinality, we will simply define everything in terms of $\mathrm{Ext}$-groups. 

So let $\mc C$ be an $A_\infty$-category over a finite field $\FF_q$.
Denote by $\mathrm{Iso}(\mc C)$ the set of isomorphism classes of objects in $\mc C$, and given $A\in\Ob(\mc C)$ denote its class in $\mathrm{Iso}(\mc C)$ by $[A]$.
We say that $\mc C$ is \textbf{locally left-finite} if $\dim\Ext^i(A,B)<\infty$ for all $A,B\in\Ob(\mc C)$, $i\in\ZZ$ and $\Ext^i(A,B)=0$ for $i$ less than some integer depending on $A,B\in\Ob(\mc C)$.
In this situation the \textbf{weighted counting measure} on $\mathrm{Iso}(\mc C)$ assigns to the singleton $\{X\}\subset\mathrm{Iso}(\mc C)$ the rational number
\[
\mu_{\mc C}(X):=|\mathrm{Aut}(X)|^{-1}\prod_{k=1}^\infty\left|\mathrm{Ext}^{-k}(X,X)\right|^{(-1)^{k+1}}
\]
where $\mathrm{Aut}(X)\subset\mathrm{Ext}^0(X,X)$ is the group of automorphisms of $X$.
We think of the vector space $\QQ\mathrm{Iso}(\mc C)$ of finite $\QQ$-linear combinations of elements of $\mathrm{Iso}(\mc C)$ as the space of (signed, $\QQ$-valued) finite measures on $\mathrm{Iso}(\mc C)$.
While $\mu_{\mc C}$ is in general not finite, we can use it to identify the space of finitely supported functions with the space of finite measures via $f\mapsto f\mu_{\mc C}$.

An $A_\infty$ functor $F:\mc C\to \mc D$ induces a linear map
\[
F_*:\QQ\mathrm{Iso}(\mc C)\longrightarrow \QQ\mathrm{Iso}(\mc D),\qquad F_*([A]):=[FA].
\]
If furthermore $\mc C$ and $\mc D$ are linear over $\FF_q$ and locally left-finite, and $F$ has the property that for any $[Y]\in\mathrm{Iso}(\mc D)$ there are only finitely many $[X]\in\mathrm{Iso}(\mc C)$ with $[FX]=[Y]$, then there is a linear map
\[
F^!:\QQ\mathrm{Iso}(\mc D)\longrightarrow \QQ\mathrm{Iso}(\mc C) \\
\]
\begin{align*}
F^!([Y]):&=\sum_{\substack{[X]\in\mathrm{Iso}(\mc C) \\ [FX]=[Y]}}\frac{|\Aut(Y)|}{|\Aut(X)|}\prod_{i\geq 1}\left(\frac{\left|\Ext^{-i}(Y,Y)\right|}{\left|\Ext^{-i}(X,X)\right|}\right)^{(-1)^i}[X] \\
&=\sum_{\substack{[X]\in\mathrm{Iso}(\mc C) \\ [FX]=[Y]}}\frac{\mu_{\mc C}(X)}{\mu_{\mc D}(Y)}[X].
\end{align*}
Consider the special case when $F:\mc C\to *$ is the functor to the final $A_\infty$-category, $*$, with a single object and $\Hom^k=0$, in particular $\QQ\mathrm{Iso}(*)=\QQ$.
In order for $F^!$ to be defined we need $\mc C$ to be locally finite and have only finitely many objects up to isomorphism.
Then $F^!(1)$ is the weighted counting measure and $F_*F^!(1)\in\QQ$ is the homotopy cardinality of $\mc C$.
We remark that if elements of $\QQ\mathrm{Iso}(\mc C)$ are interpreted a \textit{functions} rather then measures, one should instead use $F^*$, which is pullback of functions along the map $\mathrm{Iso}(\mc C)\to\mathrm{Iso}(\mc D)$, and $F_!$ which sends the delta function at $[X]\in\mathrm{Iso}(\mc C)$ to $\frac{\mu_{\mc C}(X)}{\mu_{\mc D}(Y)}[FX]$, c.f. \cite{toen_derived_hall}.

Our next goal is to establish a simpler formula for $F^!$ for a special class of functors.
Assume as before that $\mc C,\mc D$ are $A_\infty$-categories over a finite field $\FF_q$ which are locally left-finite and that the induced map $\mathrm{Iso}(\mc C)\to\mathrm{Iso}(\mc D)$ is finite--to--one.
Furthermore, we require that:
\begin{enumerate}[1)]
\item 
$F$ is full at the chain level, i.e. the maps $\Hom_{\mc C}(X,Y)\to\Hom_{\mc D}(FX,FY)$ are surjective.
\item
$F$ has the isomorphism lifting property: Given an isomorphism $f:FX\to Y$ in $\mc D$ there is an object $\widetilde{Y}\in\Ob(\mc C)$ with $F\widetilde Y=Y$ and an isomorphism $\tilde f:X\to \widetilde Y$ with $F(\tilde f)=f$.
\item
$F$ reflects isomorphisms: If $F(f):FX\to FY$ is an isomorphism then $f$ is an isomorphism.
\end{enumerate}
By the first assumption on $F$ we have an exact sequence of cochain complexes
\begin{equation*}
0\longrightarrow K^\bullet(X,Y)\longrightarrow \Hom^\bullet_{\mc C}(X,Y)\longrightarrow\Hom^\bullet_{\mc D}(FX,FY)\longrightarrow 0
\end{equation*}
for each $X,Y\in\Ob(\mc C)$, where 
\begin{equation*}
K^i(X,Y):=\mathrm{Ker}\left(\Hom^i_{\mc C}(X,Y)\to\Hom^i_{\mc D}(FX,FY)\right)
\end{equation*}
and thus long exact sequences
\begin{equation}\label{full_functor_les}
\ldots\longrightarrow HK^i(X,Y)\longrightarrow \Ext^i_{\mc C}(X,Y)\longrightarrow\Ext^i_{\mc D}(FX,FY)\longrightarrow\ldots.
\end{equation}
Given $Y\in\mc D$ let $F_Y$ be the set of equivalence classes of objects $X\in\mc C$ with $FX=Y$ where $X\sim X'$ if there is an isomorphism $f:X\to X'$ with $F(f)=1_Y$ in $\Hom^0(Y,Y)$ (equivalently: $F(f)=1_Y$ in $\Ext^0(Y,Y)$).

\begin{lemma}
\label{lem_upper_shriek_formula}
Let $F:\mc C\to\mc D$ be an $A_\infty$ functor satisfying the above conditions, then
\begin{equation*}
F^!([Y])=\sum_{X\in F_Y}\prod_{i\geq 0}\left|HK^{-i}(X,X)\right|^{(-1)^{i+1}}[X]
\end{equation*}
for any $[Y]\in\QQ\mathrm{Iso}(\mc D)$.
\end{lemma}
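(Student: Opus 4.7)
The plan is to compare the two sides of the asserted identity as elements of $\QQ\mathrm{Iso}(\mc C)$ by grouping the sum over $F_Y$ according to the underlying isomorphism class in $\mc C$. Since $HK^{-i}(X,X)$ depends only on $[X]\in\mathrm{Iso}(\mc C)$, the coefficient of $[X]$ in the right-hand side is $n_{[X]}\cdot\prod_{i\geq 0}|HK^{-i}(X,X)|^{(-1)^{i+1}}$, where $n_{[X]}$ denotes the number of elements of $F_Y$ whose underlying iso class in $\mc C$ is $[X]$. Matching this with the definition of $F^!$ reduces the lemma to the identity
\[
n_{[X]}\cdot\prod_{i\geq 0}|HK^{-i}(X,X)|^{(-1)^{i+1}}=\frac{|\Aut(Y)|}{|\Aut(X)|}\prod_{i\geq 1}\left(\frac{|\Ext^{-i}_{\mc D}(Y,Y)|}{|\Ext^{-i}_{\mc C}(X,X)|}\right)^{(-1)^{i}}.
\]

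First I would compute $n_{[X]}$ by a coset argument. Fix any representative $X_0\in F_Y$ of $[X]$, which exists by the isomorphism-lifting hypothesis. For any object $X'$ with $FX'=Y$ and $[X']=[X]$, and any isomorphism $g\colon X_0\to X'$ in the homotopy category, the element $F(g)\in\Aut(Y)$ is well-defined modulo $F(\Aut(X_0))$ and also independent of the choice of representative of the $F_Y$-class of $X'$. This defines a bijection between the $F_Y$-classes representing $[X]$ and the coset space $\Aut(Y)/F(\Aut(X_0))$: injectivity follows from the definition of $F_Y$-equivalence, and surjectivity from isomorphism lifting applied to each $y\in\Aut(Y)$. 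Hence
\[
n_{[X]}=\frac{|\Aut(Y)|\cdot|G(X_0)|}{|\Aut(X_0)|},\qquad G(X_0):=\{a\in\Aut(X_0)\mid F(a)=1_Y\}.
\]

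The next step is to compute $|G(X_0)|$ via the long exact sequence \eqref{full_functor_les}. Writing $\psi\colon\Ext^0_{\mc C}(X_0,X_0)\to\Ext^0_{\mc D}(Y,Y)$ for the map induced by $F$, the isomorphism-reflecting hypothesis ensures that every $1_{X_0}+x$ with $\psi(x)=0$ lies in $\Aut(X_0)$, because its image $1_Y$ is invertible, so $|G(X_0)|=|\mathrm{Ker}\,\psi|$. Exactness at $\Ext^0_{\mc C}$ identifies $\mathrm{Ker}\,\psi$ with the image of $HK^0\to\Ext^0_{\mc C}$, and iterating the rank-nullity identity along \eqref{full_functor_les} --- which terminates because local left-finiteness forces $\Ext^{-i}$ and hence $HK^{-i}$ to vanish for $i\gg 0$ --- yields
\[
|G(X_0)|=\prod_{i\geq 0}|HK^{-i}(X_0,X_0)|^{(-1)^i}\cdot\prod_{i\geq 1}\left(\frac{|\Ext^{-i}_{\mc D}(Y,Y)|}{|\Ext^{-i}_{\mc C}(X_0,X_0)|}\right)^{(-1)^i}.
\]
Substituting into the expression for $n_{[X]}$ causes the $HK$ factors to cancel against $\prod_{i\geq 0}|HK^{-i}|^{(-1)^{i+1}}$, leaving exactly the required identity.

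The main subtlety, rather than any serious obstacle, is bookkeeping: each of the three hypotheses on $F$ must be invoked at the right step --- chain-level fullness to produce the short (and hence long) exact sequence of $\Hom$-complexes, isomorphism lifting for both the existence of $X_0\in F_Y$ and the surjectivity of the coset map, and isomorphism reflecting to convert a multiplicative kernel inside $\Aut(X_0)$ into the additive kernel of $\psi$ on $\Ext^0$. Signs in the alternating product along \eqref{full_functor_les} require some care, but otherwise the argument is formal.
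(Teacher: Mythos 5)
Your proof is correct and follows essentially the same route as the paper: your coset bijection between the $F_Y$-classes over $[X]$ and $\Aut(Y)/F(\Aut(X_0))$ is the paper's orbit--stabilizer argument for the $\Aut(Y)$-action on $F_Y$, and both arguments then convert $|G(X_0)|=|\mathrm{Im}(HK^0(X,X)\to\Ext^0(X,X))|$ into the alternating product of the $|HK^{-i}|$ and $|\Ext^{-i}|$ via the long exact sequence \eqref{full_functor_les}, using reflection of isomorphisms at the same point. The final cancellation matches the paper's computation exactly.
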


\begin{proof}
Let $Y\in\Ob(\mc D)$, $X\in\Ob(\mc C)$ with $FX=Y$, and $f:Y\to Y$ an isomorphism. 
By the isomorphism lifting property of $F$ there exists an object $\widetilde{Y}\in\Ob(\mc C)$ together with an isomorphism $\tilde{f}:X\to\widetilde Y$ such that $F(\tilde f)=f$.
The class of $\widetilde Y$ in $F_Y$ depends only on the class of $X$ in $F_Y$ and the class of $f$ in $\Aut(Y)$, and we get in this way an action of $\Aut(Y)$ on $F_Y$.
The set of orbits is
\begin{equation*}
F_Y/\Aut(Y)\cong\left\{[X]\in\mathrm{Iso}(\mc C)\mid [FX]=[Y]\right\}
\end{equation*}
while the stabilizer of $X\in F_Y$ is the image of the map $\Aut(X)\to\Aut(Y)$.
In particular $F_Y$ is a finite set for any $Y$.
Note also the exactness of the sequence 
\begin{equation*}
HK^0(X,X)\longrightarrow\Aut(X)\longrightarrow\Aut(Y)
\end{equation*}
where the first map is given by $f\mapsto 1_X+f$ and we use the assumption that $F$ reflects isomorphisms.
We conclude that 
\begin{align*}
\left|\left\{X'\in F_Y \mid [X']=[X]\right\}\right|&=\frac{|\Aut(Y)|}{\left|\mathrm{Im}(\Aut(X)\to\Aut(Y))\right|} \\
&=\frac{\Aut(Y)}{\Aut(X)}\left|\mathrm{Im}(HK^0(X,X)\to\Ext^0(X,X))\right|
\end{align*}
for $[X]\in\mathrm{Iso}(\mc C)$.

On the other hand, the long exact sequence \eqref{full_functor_les} gives
\begin{gather*}
\prod_{i\geq 1}\left(\frac{\left|\Ext^{-i}(Y,Y)\right|}{\left|\Ext^{-i}(X,X)\right|}\right)^{(-1)^i}=\left|\mathrm{Im}(HK^0(X,X)\to\Ext^0(X,X))\right|\prod_{i\geq 0}\left|HK^{-i}(X,X)\right|^{(-1)^{i+1}}.
\end{gather*}
Combing all this,
\begin{align*}
F^!([Y])&=\sum_{\substack{[X]\in\mathrm{Iso}(\mc C) \\ [FX]=[Y]}}\frac{|\Aut(Y)|}{|\Aut(X)|}\prod_{i\geq 1}\left(\frac{\left|\Ext^{-i}(Y,Y)\right|}{\left|\Ext^{-i}(X,X)\right|}\right)^{(-1)^i}[X] \\
&=\sum_{X\in F_Y}\prod_{i\geq 0}\left|HK^{-i}(X,X)\right|^{(-1)^{i+1}}[X]
\end{align*}
\end{proof}

\subsubsection{Counting Maurer--Cartan elements}

Suppose $\mc C$ is a curved $A_\infty$-category over a finite field $\FF_q$ and which is locally left-finite.
Let $F:\widetilde{\mc C}_{\geq 0}\to \mc C_0$ and $G:\widetilde{\mc C}_{\geq 0}\to \widetilde{\mc C}$ be the functors as in \eqref{curved_zigzag}.
The pull--push gives a map
\begin{equation*}
\QQ\mathrm{Iso}\left(\mc C_0\right)\xrightarrow{\quad F^!\quad }\QQ\mathrm{Iso}\left(\widetilde{\mc C}_{\geq 0}\right)\xrightarrow{\quad G_*\quad}\QQ\mathrm{Iso}(\widetilde{\mc C}).
\end{equation*} 
We want to show that this is well-defined and find a simpler formula.
By Proposition~\ref{prop_isoreflect} and Proposition~\ref{prop_isolift} we may apply Lemma~\ref{lem_upper_shriek_formula} to the functor $F$ to conclude that
\begin{equation*}
F^!([X])=\sum_{\delta\in\mc{MC}_X/\mc G_X}\prod_{i\geq 0}\left|H^{-i}\left(\Hom((X,\delta),(X,\delta))_{>0}\right)\right|^{(-1)^{i+1}}[(X,\delta)]
\end{equation*}
for any $X\in\Ob(\mc C_0)$.

\begin{prop}
\label{upper_shriek_formula}
Let $\mc C$ be a curved $A_\infty$-category over a finite field $\FF_q$ which is locally left-finite on the chain level, i.e. $\mathrm{Hom}^k(X,Y)$ is finite dimensional and vanishes for $k\ll 0$, then
\begin{equation*}
F^!(X)=\prod_{i\geq 0}\left|\Hom^{-i}(X,X)_{>0}\right|^{(-1)^{i+1}}\sum_{\delta\in\mc{MC}_X}[(X,\delta)]
\end{equation*}
where $F$ is the canonical functor $\widetilde{\mc C}_{\geq 0}\to\mc C_0$.
\end{prop}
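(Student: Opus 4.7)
The plan is to start from the formula displayed in the paragraph preceding the proposition,
\[
F^!([X]) = \sum_{[\delta] \in \mc{MC}_X/\mc G_X}\prod_{i\geq 0}\bigl|H^{-i}(C^\bullet(\delta))\bigr|^{(-1)^{i+1}}[(X,\delta)],
\]
where I abbreviate $C^\bullet(\delta) := \Hom^\bullet((X,\delta),(X,\delta))_{>0}$ (with twisted differential $\widetilde m_1$), and to rewrite this as a sum over all of $\mc{MC}_X$ by unfolding gauge orbits. The task then breaks into two steps: counting the size of each gauge orbit, and identifying the resulting per-element weight with the claimed $\delta$-independent scalar via an Euler-characteristic calculation.

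For the orbit count, the surjection $\mc G_X \twoheadrightarrow \mc G_X \cdot \delta$ provided by Lemma~\ref{lem_gauge_action} has fibers of constant size $|I(\delta,\delta)|$ by Lemma~\ref{lem_gauge_iso} (take $\delta''=\delta$ and compose on the left), and the condition $\widetilde m_1(1+x)=0$ identifies $I(\delta,\delta)$ with the group of degree-zero $\widetilde m_1$-cocycles in $C^\bullet(\delta)$. Combined with $|\mc G_X| = |\Hom^0(X,X)_{>0}|$ and the natural isomorphism $C^0/Z^0 \xrightarrow{\widetilde m_1} B^1$, this gives
\[
|\mc G_X\cdot\delta| \;=\; \frac{|\Hom^0(X,X)_{>0}|}{|Z^0(C^\bullet(\delta))|} \;=\; |B^1(C^\bullet(\delta))|.
\]
Unfolding the sum over gauge classes, each $\delta\in\mc{MC}_X$ then contributes $[(X,\delta)]$ with coefficient $\prod_{i\geq 0}|H^{-i}(C^\bullet(\delta))|^{(-1)^{i+1}}\,/\,|B^1(C^\bullet(\delta))|$.

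The remaining Euler-characteristic step is to verify, for any cochain complex $D^\bullet$ of finite-dimensional $\FF_q$ vector spaces with $D^k=0$ for $k\ll 0$, the telescoping identity
\[
\prod_{i\geq 0}|D^{-i}|^{(-1)^{i+1}} \;=\; |B^1(D^\bullet)|^{-1}\prod_{i\geq 0}|H^{-i}(D^\bullet)|^{(-1)^{i+1}}.
\]
This follows by writing $|D^k| = |H^k|\cdot|B^k|\cdot|B^{k+1}|$ in each degree, summing the alternating exponents over $k\leq 0$, and noting that interior boundary terms cancel in adjacent pairs while the tail $|B^{-N}|^{\pm 1}$ drops out once $N$ is large enough that $D^{-N}=0$. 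Applying this to $D^\bullet = C^\bullet(\delta)$, whose underlying graded vector space is the $\delta$-independent $\Hom^\bullet(X,X)_{>0}$, rearranges to
\[
\frac{\prod_{i\geq 0}|H^{-i}(C^\bullet(\delta))|^{(-1)^{i+1}}}{|B^1(C^\bullet(\delta))|} \;=\; \prod_{i\geq 0}|\Hom^{-i}(X,X)_{>0}|^{(-1)^{i+1}},
\]
and summing over $\mc{MC}_X$ gives the formula of the proposition. The only real subtlety is that $C^\bullet(\delta)$ is generally unbounded above, so the Euler-characteristic telescope must be applied only on the truncation to degrees $k\leq 1$; the chain-level left-finiteness in the hypothesis is precisely what ensures convergence, and $\delta$-independence of the final scalar is manifest because $|\Hom^{-i}(X,X)_{>0}|$ does not involve $\delta$ at all.
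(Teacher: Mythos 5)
Your proposal is correct and follows essentially the same route as the paper: start from the gauge-orbit formula, use Lemmas~\ref{lem_gauge_action} and~\ref{lem_gauge_iso} to show each orbit has size $|\mc G_X|/|I(\delta,\delta)|$, identify $I(\delta,\delta)$ with the degree-zero cocycles, and telescope the Euler-characteristic identity to replace cohomology cardinalities by cochain cardinalities. The only cosmetic difference is that you package the cancellation through $|B^1(C^\bullet(\delta))|$ where the paper uses $c_\delta=|Z^0|$ directly; these are interchangeable via $|\Hom^0(X,X)_{>0}|=|Z^0|\cdot|B^1|$.
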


\begin{proof}
By Lemma~\ref{lem_gauge_action} we have 
\begin{equation*}
\mc G_X\cong \bigsqcup_{\delta'\sim\delta}I(\delta,\delta')
\end{equation*}
for any $\delta\in\mc{MC}_X$, but Lemma~\ref{lem_gauge_iso} shows that each of the sets $I(\delta,\delta')$ is either empty or isomorphic to $I(\delta,\delta)$, thus
\begin{equation*}
\left|\mc G_X\right|=\left|\left\{\delta'\mid\delta'\sim\delta\right\}\right|\cdot \left|I(\delta,\delta)\right|.
\end{equation*}
Note that
\begin{gather*}
\left|\mc G_X\right|=\left|\Hom^0(X,X)_{>0}\right| \\
\left|I(\delta,\delta)\right|=\left|\left\{x\in\Hom^0((X,\delta),(X,\delta))_{>0}\mid \widetilde{m}_1(x)=0\right\}\right|=:c_\delta
\end{gather*}
and
\begin{equation*}
\prod_{i\geq 0}\left|H^{-i}\left(\Hom((X,\delta),(X,\delta))_{>0}\right)\right|^{(-1)^{i+1}}=
c_\delta^{-1}\prod_{i\geq 1}\left|\Hom^{-i}(X,X)_{>0}\right|^{(-1)^{i+1}}
\end{equation*}
hence
\begin{align*}
F^!([X])&=\sum_{\delta\in\mc{MC}_X/\mc G_X}\prod_{i\geq 0}\left|H^{-i}\left(\Hom((X,\delta),(X,\delta))_{>0}\right)\right|^{(-1)^{i+1}}[(X,\delta)] \\
&=\sum_{\delta\in\mc{MC}_X}\frac{\left|I(\delta,\delta)\right|}{|\mc G_X|}c_\delta^{-1}\prod_{i\geq 1}\left|\Hom^{-i}(X,X)_{>0}\right|^{(-1)^{i+1}}[(X,\delta)] \\
&=\prod_{i\geq 0}\left|\Hom^{-i}(X,X)_{>0}\right|^{(-1)^{i+1}}\sum_{\delta\in\mc{MC}_X}[(X,\delta)].
\end{align*}
\end{proof}

\subsection{Hall algebra}
\label{subsec_hall}

Let $\mc C$ be a locally left-finite $A_\infty$-category over a finite field $\FF_q$. 
Assume furthermore that $\mc C$ is closed under extensions and has a zero object.
Then we have a diagram of categories and functors
\begin{equation*}
\begin{tikzcd}
 & \mc C^{\mc A_2} \arrow[dl,"F"']\arrow[dr,"G"] & \\
\mc C\times \mc C & & \mc C
\end{tikzcd}
\end{equation*}
where $\mc C^{\mc A_2}$ is the category of exact triangles in $\mc C$, whose objects can be concretely represented by twisted complexes $C\xrightarrow{\delta} A$, $\delta\in\Hom^1(C,A)$, $m_1(\delta)=0$, which $F$ sends to the pair $(A,C)$ and $G$ sends to $\mathrm{Cone}(C[-1]\xrightarrow{\delta} A)$, which exists in $\mc C$ by assumption.
Passing to $\QQ\mathrm{Iso}(\mc C)$, the pull--push along the diagram gives a product map
\begin{equation*}
\QQ\mathrm{Iso}(\mc C)\otimes \QQ\mathrm{Iso}(\mc C)\xrightarrow{\quad F^!\quad }\QQ\mathrm{Iso}(\mc C^{\mc A_2})\xrightarrow{\quad G_*\quad}\QQ\mathrm{Iso}(\mc C).
\end{equation*} 
Using Lemma~\ref{lem_upper_shriek_formula} one derives the following explicit formula for the product, which can also be deduced from \cite{toen_derived_hall}.
\begin{equation}\label{hall_product}
[A]\cdot [C]=\left(\prod_{i=0}^\infty\left|\Ext^{-i}(C,A)\right|^{(-1)^{i+1}}\right)\sum_{f\in\Ext^1(C,A)}\left[\mathrm{Cone}(C[-1]\xrightarrow{f} A)\right]
\end{equation}
The vector space $\QQ\mathrm{Iso}(\mc C)$ together with this product is called the \textbf{Hall algebra} of $\mc C$, denoted $\mathrm{Hall}(\mc C)$.
This is an associative algebra (see below) with unit $[0]$, where $0\in\Ob(\mc C)$ is a zero object.

As noted above, we think of elements of $\QQ\mathrm{Iso}(\mc C)$ as \textit{measures}, following the convention of Kontsevich--Soibelman~\cite[Section 6.1]{ks}. 
To\"en~\cite{toen_derived_hall} uses instead $F^*$ and $G_!$, consistent with the point of view that elements of the Hall algebra are \textit{functions}.
Multiplication by the weighted counting measure, $f\mapsto f\mu_{\mc C}$, defines an isomorphism between the two Hall algebras. 

\begin{prop}
The Hall algebra is associative.
\end{prop}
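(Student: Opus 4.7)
The plan is to realize associativity as a base change statement for a single larger span of iterated extensions. Concretely, I would introduce the category $\mc C^{\mc A_3}$ whose objects are strictly upper triangular $3\times 3$ twisted complexes on triples $(A,B,C)\in\Ob(\mc C)^3$, i.e.\ the data of closed $\delta_{12}\in\Hom^1(B,A)$, $\delta_{23}\in\Hom^1(C,B)$ and $\delta_{13}\in\Hom^1(C,A)$ satisfying the Maurer--Cartan equation $m_1(\delta_{13})+m_2(\delta_{12},\delta_{23})=0$, with morphisms the evident chain maps. This sits in a span
\[
\mc C\times\mc C\times\mc C \xleftarrow{\;P\;} \mc C^{\mc A_3} \xrightarrow{\;T\;} \mc C,
\]
where $P$ records the three diagonal objects and $T$ takes the total twisted complex, which exists in $\mc C$ by extension closure.

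Next, I would show that both $(A\cdot B)\cdot C$ and $A\cdot (B\cdot C)$ coincide with $T_*\circ P^!$ applied to $[A]\otimes [B]\otimes [C]$. For the left bracketing one factors $P$ and $T$ through an intermediate fiber product $\mc C^{\mc A_2}\times_{\mc C}\mc C^{\mc A_2}$ that first groups $A$ with $B$ (via $\delta_{12}$) and then extends the result by $C$ (via $\delta_{23}$, together with the correction term $\delta_{13}$ forced by the Maurer--Cartan equation). The resulting commuting square must satisfy a base change identity $T_L^!\circ\mathrm{Cone}_*=\mathrm{Cone}_*\circ P_L^!$. Because all the functors involved are full at the chain level, have the isomorphism lifting property (analogous to Proposition~\ref{prop_isolift} for the twisted-complex construction) and reflect isomorphisms, Lemma~\ref{lem_upper_shriek_formula} applies and gives explicit counting formulas for each $F^!$. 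A symmetric argument handles the right bracketing, and the two resulting identifications with $T_*P^!$ force $(A\cdot B)\cdot C=A\cdot (B\cdot C)$.

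The main obstacle is verifying the base change identity. Unwound, it asserts that for fixed $A,C\in\Ob(\mc C)$ and a target $X\in\Ob(\mc C)$, the weighted sum over intermediate isomorphism classes $[E]$ of pairs (a class in $\Ext^1(B,A)$ with cone $E$, a class in $\Ext^1(C,E)$ with cone $X$) equals the weighted count of genuine $3$-step filtrations of $X$ with outer graded pieces $A$ and $C$. The key input is the long exact sequence in $\Ext$ associated to the triangle $A\to E\to B$: the alternating products of cardinalities of the $\Ext^{-i}$-groups telescope, matching the weight factor $\mu_{\mc C^{\mc A_3}}$ against the iterated product of $\mu_{\mc C^{\mc A_2}}$ weights. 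Local left-finiteness ensures all infinite products truncate and all sums over $[E]$ are finite, since $E$ arises as a subobject of the fixed $X$. This is the higher-$\Ext$ version of the classical Green/Ringel associativity computation, and has a direct counterpart in To\"en's derived Hall algebra argument, which could alternatively be invoked after rescaling by $\mu_{\mc C}$ to match conventions.
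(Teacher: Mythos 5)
Your proposal is built around the same central object as the paper's proof---the set of strictly upper triangular twisted complexes on $A\oplus B\oplus C$ satisfying $m_1(\delta_{13})+m_2(\delta_{12},\delta_{23})=0$---but the route from there to associativity is genuinely different. You propose to identify both bracketings with $T_*\circ P^!$ for the big span via a base change identity for composite spans, verified by Lemma~\ref{lem_upper_shriek_formula} and a telescoping of alternating products of $|\Ext^{-i}|$ along the long exact sequence of the triangle $A\to E\to B$; this is essentially To\"en's derived Hall algebra argument (or the $2$-Segal property of the Waldhausen construction), and it does work, but the telescoping is delicate precisely at the truncation between $\Ext^0$ and $\Ext^1$, where the boundary map interacts with the fibers of the cone construction---this is the higher analogue of Green's formula and is the part your sketch leaves implicit. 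The paper sidesteps all of this: it first passes to a quasi-equivalent model in which local left-finiteness holds at the chain level, rewrites the product as a sum over \emph{closed chain-level} elements of $\Hom^1$ with prefactor $\prod_i|\Hom^{-i}(C,A)|^{(-1)^{i+1}}$, and then computes both bracketings directly as the same sum over the set $X_{123}$ of upper triangular deformations with the manifestly symmetric prefactor $\prod_{k}\prod_{i<j}|\Hom^{-k}(A_j,A_i)|^{(-1)^k}$ --- no base change, no long exact sequences, no matching of weights. Your approach buys a conceptual statement (functoriality of pull--push along a $2$-Segal-type square) and generality; the paper's buys a short, self-contained verification in which the symmetry of the answer in the three arguments is visible by inspection. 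If you pursue your route, you should either carry out the $\Ext^0/\Ext^1$ boundary analysis explicitly or, as you suggest, cite To\"en's result and translate conventions by rescaling with the weighted counting measure.
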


\begin{proof}
The proof below is adapted from \cite{ks} with some simplifications. 
Passing to a quasi-equivalent category, we may assume that local left-finiteness holds on the chain level, i.e. each $\Hom^i(X,Y)$ is finite-dimensional and vanishes for $i\ll 0$, then
\begin{equation*}
[A]\cdot [C] =\left(\prod_{i=0}^\infty\left|\Hom^{-i}(C,A)\right|^{(-1)^{i+1}}\right)\sum_{\substack{f\in\Hom^1(C,A) \\ m_1(f)=0}}\left[\mathrm{Cone}(C[-1]\xrightarrow{f} A)\right].
\end{equation*}

Fix a triple of objects $A_1,A_2,A_3\in\Ob(\mc C)$ and consider the set $X_{123}$ of upper triangular deformations of $A_1\oplus A_2\oplus A_3$, i.e. triples $a_{12},a_{13},a_{23}$, $a_{ij}\in\Hom^1(A_j,A_i)$, with $m_1(a_{12})=0$, $m_1(a_{23})=0$, $m_1(a_{13})+m_2(a_{12},a_{23})=0$, c.f. \eqref{MC_eqn}.
Since $\mc C$ is assumed to be closed under extensions, each element of $X_{123}$ gives an object in $\mc C$ up to isomorphism.
We have
\begin{align*}
(\left[A_1\right]&\cdot \left[A_2\right])\cdot \left[A_3\right]= \\
&=\left(\prod_{i=0}^\infty\left|\Hom^{-i}(A_2,A_1)\right|^{(-1)^i}\right)^{-1}\sum_{\substack{a_{12}\in\Hom^1(A_2,A_1)\\ m_1(f)=0}}[A_2\xrightarrow{a_{12}} A_1]\cdot [A_3] \\
&=\left(\prod_{i=0}^\infty\left|\Hom^{-i}(A_2,A_1)\right|^{(-1)^i}\right)^{-1}\left(\prod_{i=0}^\infty\left|\Hom^{-i}(A_3,A_2\oplus A_1)\right|^{(-1)^i}\right)^{-1}\cdot \\
&\qquad\qquad\sum_{\substack{a_{12}\\ m_1(f)=0}}\sum_{\substack{a_{13},a_{23} \\ \widetilde{m}_1((a_{13},a_{23}))=0}} \left[A_3\xrightarrow{(a_{13},a_{23})}(A_2\xrightarrow{a_{12}} A_1)\right] \\
&=\left(\prod_{k=0}^\infty\prod_{\substack{i,j\in\{1,2,3\}\\ i<j}}\left|\Hom^{-k}(A_j,A_i)\right|^{(-1)^k}\right)^{-1}\sum_{D\in X_{123}}[D] \\
\end{align*}
and similarly
\begin{equation*}
\left[A_1\right]\cdot (\left[A_2\right]\cdot \left[A_3\right])=\left(\prod_{k=0}^\infty\prod_{\substack{i,j\in\{1,2,3\}\\ i<j}}\left|\Hom^{-k}(A_j,A_i)\right|^{(-1)^k}\right)^{-1}\sum_{D\in X_{123}}[D].
\end{equation*}
which completes the proof.
\end{proof}

The idea in the above proof generalizes to give a formula for the product $\left[A_1\right]\cdots \left[A_n\right]$ in terms of twisted complexes.
Other proofs of various flavors appear in \cite{toen_derived_hall}, \cite{xiao_xu}, and \cite{dk_higher_segal}.

\begin{example}
Let $\mc C$ be the category with only the zero object, then $\mathrm{Hall}(\mc C)=\QQ$.
\end{example}

\begin{example}
Let $\mc C=\mathrm{Perf}(\FF_q)$ be the category of finite-dimensional complexes of vector spaces over $\FF_q$.
Then $\mathrm{Hall}(\mc C)$ has generators $x_k:=\left[\FF_q[-k]\right]$, $k\in\ZZ$, and relations
\begin{gather*}
x_{k+1}x_k-q^{-1}x_kx_{k+1}=q-1,\qquad k\in\ZZ \\
x_{k+m} x_k=q^{(-1)^m}x_k x_{k+m},\qquad k\in\ZZ,m\geq 2.
\end{gather*}
The first is obtained from
\begin{gather*}
\left[\FF_q[-k-1]\right]\left[\FF_q[-k]\right]=\left[\FF_q[-k-1]\oplus\FF_q[-k]\right]+(q-1)\left[0\right] \\
\left[\FF_q[-k]\right]\left[\FF_q[-k-1]\right]=q\left[\FF_q[-k-1]\oplus\FF_q[-k]\right]
\end{gather*}
and similarly for the second.
\end{example}

\subsubsection{Slicings}

The underlying vector space of the Hall algebra often admits a tensor product decomposition coming from a \textit{slicing}.
This notion was introduced by Bridgeland~\cite{bridgeland07} and generalizes that of a t-structure.
More precisely, a \textbf{slicing} of a triangulated category $\mathcal C$ is given by a collection of full additive subcategories $\mc C_\phi$ such that
\begin{enumerate}[1)]
  \item 
  $\mathcal C_{\phi}[1]=\mathcal C_{\phi+1}$
  \item 
  If $\phi_1<\phi_2$, $E_i\in\mathcal C_{\phi_i}$, then $\mathrm{Hom}(E_2,E_1)=0$.
  \item 
  Every $E\in\mathcal C$ has a \textbf{Harder--Narasimhan} filtration:
  A tower of triangles
  \begin{equation*} \begin{tikzcd}
  0=E_0 \arrow{r} & E_1 \arrow{d}          & \cdots & E_{n-1} \arrow{r}  & E_n=E
  \arrow{d} \\
                  & A_1 \arrow[dotted]{ul} &        &                    & A_n
  \arrow[dotted]{ul}
  \end{tikzcd} \end{equation*}
  with $0\neq A_i\in\mathcal C_{\phi_i}$ and
  $\phi_1>\phi_2>\ldots>\phi_n$.
\end{enumerate}

The HN-filtrations are unique as a consequence of the other axioms.
As an example, any bounded t-structure can be interpreted as a slicing with $\mc C_\phi=0$ for $\phi\notin\ZZ$.
A slicing is part of the data of a Bridgeland stability condition, however most slicings do not come from stability conditions.

We consider slicings which satisfy the additional condition that
\begin{equation}\label{slicing_split}
\phi_1<\phi_2, E_i\in\mathcal C_{\phi_i}\qquad \implies\qquad \mathrm{Ext}^1(E_1,E_2)=0.
\end{equation}
For example, if $\mc A$ is a hereditary abelian category ($\mathrm{Ext}^{\geq 2}=0$) and $\mc C=D^b(\mc A)$, then the slicing defined by the standard bounded t-structure on $\mc C$ has this property.
The condition \eqref{slicing_split} implies that all Harder--Narasimhan filtrations are split, so
\[
\mc C=\bigoplus_{\phi\in\RR}\mc C_\phi
\]
and also that if $\phi_1>\phi_2>\ldots>\phi_n$ and $A_i\in \mc C_{\phi_i}$ then
\[
[A_1]\cdots [A_n]=c[A_1\oplus \cdots\oplus A_n]
\]
for some scalar $c$, where the product is taken in the Hall algebra.
We can conclude that, as a vector space
\begin{equation}\label{hall_tprod_decomp}
\mathrm{Hall}(\mc C)\cong \bigotimes_{\phi\in\RR}\mathrm{Hall}(\mc C_\phi)
\end{equation}
where the natural map from the right-hand side to the left-hand side is given by 
\[
[A_1]\otimes \cdots\otimes [A_n]\mapsto [A_1]\cdots [A_n]
\]
where $A_i\in \mc C_{\phi_i}$ and $\phi_1>\ldots>\phi_n$.

\section{Fukaya categories of surfaces}
\label{sec_fukaya}

In this section we discuss Fukaya categories of surfaces.
While there are several works which define and study them, see for example \cite{seidel08,abouzaid_surfaces,stz,dk_triangulated, hkk}, none of the existing approaches are entirely suitable for our purposes.
Fortunately, there are no novel ideas needed, just the right combination of existing ones, e.g. the use of Maurer--Cartan elements, see Subsection~\ref{subsec_mc}. 
What is really new, to our knowledge, is the general relation between smoothing of intersections and Maurer--Cartan elements which we discuss in Subsection~\ref{subsec_smoothing}.
Certain foundational issues in defining Fukaya categories of surfaces will not be addressed in detail here and we instead refer the reader to the reference given above.

\subsection{Setup and conventions}
\label{subsec_setup}

For our purposes it will be essential to have a version of the Fukaya category which is defined over arbitrary base field and $\ZZ$-graded, so that its Hall algebra is defined.
To provide an overview we start by listing the data which enters into the definition.
This is essentially the setup from \cite{hkk}, except that we also want an explicit choice of Liouville 1-form, which was suppressed there.
\begin{enumerate}[1)]
\item
$S$ \ldots compact surface with boundary
\item
$N\subset\partial S$ \ldots finite set of marked points
\item
$\theta$ \ldots Liouville 1-form on $S$
\item
$\eta\in\Gamma(S,\PP(TS))$ \ldots grading structure on $S$
\item
$\KK$ \ldots coefficient field
\end{enumerate}
Let us say a bit more about the third and fourth point.
The 1-form $\theta$ should have the property that $d\theta$ is nowhere vanishing (thus is an area form and provides an orientation of $S$) and that the Liouville vector field $Z$ characterized by
\[
i_Zd\theta=\theta
\]
is outward pointing along $\partial S$ (c.f. see \cite{seidel_biased}).
We can find such a $\theta$ provided $S$ is orientable and every component has non-empty boundary.
The 1-form $\theta$ provides a contact form $\alpha=p_1^*\theta+p_2^*dz$ on $S\times \RR$ where $z$ denotes the standard coordinate on the second factor and $p_1:S\times\RR\to S$, $p_2:S\times\RR\to\RR$ are the projection maps.

The grading structure on $S$ is needed to define the $\ZZ$-grading on morphisms of the Fukaya category and is given by a section $\eta$ of the projectivized tangent bundle $\PP(TS)$, i.e. a foliation on $S$.
The section $\eta$ provides each fiber of $\PP(TS)$ with a basepoint, so there is a well-defined fiberwise universal cover which we denote by $\widetilde{\PP(TS)}$. 
Given such a choice, there is a notion of a graded curve, which is an immersed curve $\gamma:I\to S$ together with a section $\tilde{\gamma}$ of $\gamma^*\widetilde{\PP(TS)}$ such that $\tilde{\gamma}(t)$ is a lift of the tangent space to the curve at $\gamma(t)$.
Thus, locally, there is a $\ZZ$-torsor of choices of gradings.
An immersed Legendrian curve $L$ in $S\times\RR$ projects to an immersed curve in $S$, so it makes sense to speak about gradings of $L$.
More intrinsically, we could replace $TS$ in the above discussion by the rank two subbundle $\xi=\mathrm{Ker}(\alpha)\subset T(S\times \RR)$ cut out by the contact form, which is canonically identified with $p_2^*TS$.

Given the above data we will sketch the definition of the partially wrapped Fukaya category $\mc F(S,N,\theta,\eta,\KK)$ and the infinitesimally wrapped category $\mc F^{\vee}(S,N,\theta,\eta,\KK)$.
If every component of $\partial S$ contains an element of $N$, then these two categories turn out to be isomorphic.
An object of either category is given by an graded Legendrian curve $L$ in $S\times \RR$ together with a local system of finite-dimensional $\KK$-vector spaces $E$ on $L$.
We require $L$ to be compact and embedded in $S\times \RR$ with $\partial L\subset \partial S\times\RR$.
In fact, for now we will also assume that the projection of $L$ to $S$ is also embedded and deal with the more complicated immersed case later.
Additionally $\partial L$ should be either disjoint from $N\times\RR$ or contained in it.
In the former case, $(L,E)$ belongs to $\mc F(S,N,\theta,\eta,\KK)$, while in the latter it belongs to $\mc F^{\vee}(S,N,\theta,\eta,\KK)$.

Before defining morphisms in the Fukaya category, we need a few more remarks about graded curves.

\subsubsection{Grading}
\label{sssec_grading}

Analogously to how one can assign $\pm 1$ to a transverse intersection point of oriented submanifolds, one can assign an integer to a simple crossing of graded curves.
Let $L_0=(I_0,\gamma_0,\tilde{\gamma}_0)$ and $L_1=(I_1,\gamma_1,\tilde{\gamma}_1)$ be graded immersed curves with transverse intersection at $x=\gamma_0(t_0)=\gamma_1(t_1)$. 
Then define the \textbf{intersection index}
\begin{equation}
i(L_0,t_0,L_1,t_1):=\lceil \tilde{\gamma}_1(t_1)-\tilde{\gamma}_0(t_0)\rceil:=\text{smallest }n\in\ZZ\text{ with }n+\tilde{\gamma}_0(t_0)>\tilde{\gamma}_1(t_1)
\end{equation}
where we use the fact that even though $\widetilde{\PP(T_xS)}$ is not canonically identified with $\RR$, it does have a total order (since $S$ is oriented) and action of $\ZZ$.
If $p\in S$ such that there are unique $t_0\in I_0$ and $t_1\in I_1$ with $p=p_1(\gamma(t_0))=p_1(\gamma(t_1))$ then we also write $i_p(L_0,L_1)$ for $i(L_0,t_0,L_1,t_1)$.

When depicting graded curves in the plane we may as well assume that $\eta$ is the horizontal foliation.
A grading on an immersed curve $\gamma:I\to\RR^2=\CC$ is then specified by a function $\phi:I\to\RR$ with $e^{\pi i\phi(t)}$ tangent to $\gamma(t)$.
To specify $\phi$ it suffices to label segments of $\gamma$ where $n=\lfloor \phi(t)\rfloor$ is constant by that integer, see Figure~\ref{fig_graded_curve_example}.
The figure also illustrates the following property: When following the curve (in either direction) and passing a point where the tangent becomes horizontal, the integer increases by one if turning left and decreases by one if turning right. 
Furthermore, the intersection index can be read off as in Figure~\ref{fig_graded_intersect}.

\begin{figure}[h]
\centering
\begin{tikzpicture}
\draw[thick] (0,0) to [out=45,in=180] (1,.7) to [out=0,in=90] (1.7,0) to [out=-90,in=0] (1,-.7) to [out=180,in=0] (-1,.7) to [out=180,in=90] (-1.7,0) to [out=-90,in=180] (-1,-.7) to [out=0,in=-135] (0,0);
\draw (-1,.6) to (-1,.8);
\draw (-1,-.6) to (-1,-.8);
\draw (1,.6) to (1,.8);
\draw (1,-.6) to (1,-.8);
\node [left] at (-1.7,0) {\scriptsize{0}};
\node [right] at (1.7,0) {\scriptsize{0}};
\node [rotate=-30,above] at (-.6,.5) {\scriptsize{-1}};
\node [rotate=30,below] at (-.6,-.5) {\scriptsize{1}};
\end{tikzpicture}
\caption{Example of grading of an immersed curve specified by integer labels.}
\label{fig_graded_curve_example}
\end{figure}
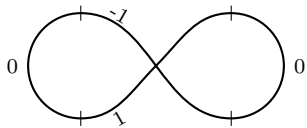

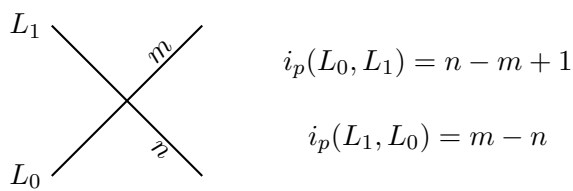
\begin{figure}[h]
\centering
\begin{tikzpicture}
\draw[thick] (-1,-1) to (1,1);
\draw[thick] (-1,1) to (1,-1);
\node [left] at (-1,-1) {$L_0$};
\node [left] at (-1,1) {$L_1$};
\node [rotate=45,above] at (.6,.5) {$m$};
\node [rotate=-45,below] at (.6,-.5) {$n$};
\node at (4,.5) {$i_p(L_0,L_1)=n-m+1$};
\node at (4,-.5) {$i_p(L_1,L_0)=m-n$};
\end{tikzpicture}
\caption{Index at intersection point $p$ of graded curves.}
\label{fig_graded_intersect}
\end{figure}

\subsubsection{Morphisms}
\label{subsubsec_morphisms}

Let $(L_0,E_0)$ and $(L_1,E_1)$ be as above and assume first that their projections to $S$ intersect transversely and $\partial L_i=\emptyset$.
To define morphisms from $(L_0,E_0)$ to $(L_1,E_1)$, we also need to make an auxiliary choice of orientation of $L_1$, then
\[
\Hom\left((L_0,E_0),(L_1,E_1)\right):=\bigoplus_{p\in p_1(L_0)\cap p_1(L_1)}\Hom_{\KK}\left((E_0)_p,(E_1)_p\right)[-i_p(L_0,L_1)]
\]
and if the orientation on $L_1$ is reversed we identify $x\in \Hom\left((L_0,E_0),(L_1,E_1)\right)$ with $(-1)^{|x|}x$, i.e. all odd morphisms get their sign reversed.
Thus, a morphism is formally an equivalence class of triples $(o_1,o_2,x)$ where $o_i$ is an orientation on $L_i$.

If either $p_1(L_0)$ is not transverse to $p_1(L_1)$ or both $L_0$ and $L_1$ have boundary, then it is necessary to perturb $L_0$ as graded Legendrian curve, which is equivalent to perturbing its projection to $S$ by a Hamiltonian diffeomorphism.
In particular this is always necessary when $L_0=L_1$.
Up to quasi-isomorphism, the resulting complex is independent of the choice of perturbation.
Let us describe how to perturb near the boundary, first for objects in $\mc{F}^{\vee}$. 
Endpoints of $L_0$ in $N\times \RR$ should be moved by a small amount along $\partial S$ in the direction of the natural induced orientation on the boundary.
For $\mc F$ there are two cases, depending on whether the component of $\partial S$ is question contains points of $N$ or not.
In the former case move the endpoint of $L_0$ along $\partial S\setminus N$ past any endpoints of $L_1$.
In the latter case it is better to enlarge $S$ to a non-compact surface $\widehat{S}$ as follows.
The backward flow of the vector field $Z$ provides an identification of an open neighborhood of $\partial S$ with $\partial S\times [0,\varepsilon)$ for $\epsilon>0$ sufficiently small and under which $\theta$ takes the from $e^{-t}\theta|_{\partial S}$.
Replacing $\partial S\times [0,\varepsilon)$ by $\partial S\times (-\infty,\varepsilon)$ we obtain a non-compact surface $\widehat{S}$ with 1-form $\hat{\theta}$, the \textit{completion} of $(S,\theta)$, which has infinite ends modeled on the standard half-cylinder.
Extend the curves to $\widehat{S}\times \RR$ so that the projection to $\widehat{S}$ is invariant under the flow outside a compact subset.
When computing morphisms, perturb $L_0$ by a Hamiltonian function of the form $t^2$ along the infinite ends, see Figure~\ref{fig_wrapping}, c.f. \cite{aaeko}.

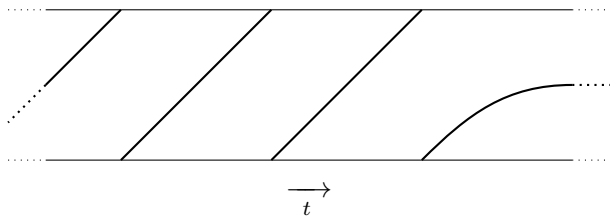
\begin{figure}[h]
\centering
\begin{tikzpicture}
\draw[dotted] (-4.5,1) to (-4,1);
\draw (-4,1) to (3,1);
\draw[dotted] (3,1) to (3.5,1);
\draw[dotted] (-4.5,-1) to (-4,-1);
\draw (-4,-1) to (3,-1);
\draw[dotted] (3,-1) to (3.5,-1);
\draw[thick,dotted] (3,0) to (3.5,0);
\draw[thick] (3,0) to [out=180,in=45] (1,-1);
\draw[thick] (1,1) to (-1,-1);
\draw[thick] (-1,1) to (-3,-1);
\draw[thick] (-3,1) to (-4,0);
\draw[thick,dotted] (-4,0) to (-4.5,-.5);
\node at (-.5,-1.5) {$\xrightarrow[t]{\quad}$};
\end{tikzpicture}
\caption{Wrapping of curves along infinite ends to compute morphisms in $\mc F$. Top and bottom horizontal lines are identified.}.
\label{fig_wrapping}
\end{figure}

At this point we have only used the projections of the curves to $S$.
One way in which we can use the Legendrian lifts is to define an $\RR$-filtration on $\Hom$-spaces, by declaring $\Hom\left((L_0,E_0),(L_1,E_1)\right)_{\geq\beta}$ to be generated by those intersection points with $z_0-z_1\geq\beta$, where $z_i$ is the $z$-coordinate of $L_i$ at the intersection point.
When computing the \textit{heights} $z_0-z_1$ we always regard the perturbation described above as negligibly small.
One way to make this precise is as follows.
Choose a family of perturbations $L_{i,t}$ of $L_i=L_{i,0}$, $t\in [0,1)$, so that $L_{i,t}\subset S$, $t\in(0,1)$, are related by a global isotopy of $S$, i.e. the topological structure remains the same and only areas of regions varies with $t$. (For example choose symplectic tubular neighborhoods of the $L_i$ and correspondingly linear families of perturbations.)
Then morphisms spaces and structure maps counting immersed polygons stay constant with $t$ and only the indexing of the filtrations changes. 
As $t\to 0$, the filtrations converge since $z$-values of $L_{i,t}$ over intersection points in $S$ converge.
The condition on the structure maps to be contractible (filtration preserving) is closed and hence holds in the limit.

\subsubsection{Structure maps}
\label{subsubsec_strmaps}

The $A_\infty$ structure maps of the Fukaya category are defined in terms of immersed polygons with boundary on the given Lagrangian curves.
More precisely let $L_k$, $k=0,\ldots,n$ be graded Legendrian curves intersecting transversely and let $x_k\in p_2(L_k)\cap p_2(L_{k+1})$, $k=0,\ldots,n-1$ and $x_n\in p_2(L_0)\cap p_2(L_n)$.
Consider a smoothly immersed $n+1$-gon $\phi:D\to S$, up to reparameterization, such that $\phi$ sends the $k$-th corner of $D$ to $x_k$ and the side of $D$ from the $(k-1)$-st to the $k$-th corner to $L_k$, see Figure~\ref{fig_disk}.
For each intersection point there is an integer degree $d_k:=i_{x_k}(L_k,L_{k+1})$, $k=0,\ldots,n-1$, and $d_n=i_{x_n}(L_0,L_n)$, and they satisfy
\[
d_n=d_0+\ldots+d_{n-1}+2-n
\]
for topological reasons.
Suppose furthermore that we have chosen orientations of the $L_i$, then a sign $(-1)^{s(D)}\in \ZZ/2$ is defined as follows.
For $k=0,\ldots,n-1$ add $1$ to $s(D)$ if $d_k$ is odd and the orientation on $L_{k+1}$ does not match the counterclockwise orientation around $\partial D$ under $\phi$. 
Also add $1$ to $s(D)$ if $d_n$ is odd and the orientation on $L_{n}$ does not match the counterclockwise orientation around $\partial D$ under $\phi$. (The orientation of $L_0$ is not used.)
This is the same as the sign convention in \cite{seidel08}.
Finally, let $E_k$ be a local systems on $L_k$, $k=0,\ldots,n$, then parallel transport along the edges of $D$ defines a map
\[
\tau(D):\Hom\left((E_{n-1})_{x_{n-1}},(E_n)_{x_{n-1}}\right) \otimes\cdots\otimes \Hom\left((E_0)_{x_0},(E_1)_{x_0}\right)\to \Hom\left((E_{0})_{x_n},(E_n)_{x_n}\right)
\]
and if $X_k:=(L_k,E_k)$ are objects of the Fukaya category we put
\begin{equation}
m_n:\Hom(X_{n-1},X_n)\otimes\cdots\otimes \Hom(X_0,X_1)\to\Hom(X_0,X_n),m_n:=\sum_{D}(-1)^{s(D)}\tau(D)
\end{equation}
where the sum is over all intersection points and immersed disks up to reparameterization as above.

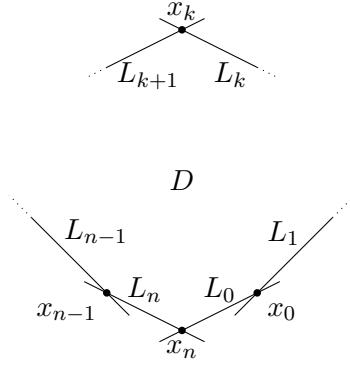
\begin{figure}
\centering
\begin{tikzpicture}
\draw (-.3,-2.15) to (1.3,-1.35);
\draw (.7,-1.8) to (2,-.5);
\draw[dotted] (2,-.5) to (2.3,-.2);
\draw (.3,-2.15) to (-1.3,-1.35);
\draw (-.7,-1.8) to (-2,-.5);
\draw[dotted] (-2,-.5) to (-2.3,-.2);
\draw (-.3,2.15) to (1,1.5);
\draw[dotted] (1,1.5) to (1.3,1.35);
\draw (.3,2.15) to (-1,1.5);
\draw[dotted] (-1,1.5) to (-1.3,1.35);

\fill (0,-2) circle [radius=.05];
\fill (1,-1.5) circle [radius=.05];
\fill (-1,-1.5) circle [radius=.05];
\fill (0,2) circle [radius=.05];

\node[below] at (0,-2) {$x_n$};
\node[below right] at (1,-1.5) {$x_0$};
\node[below left] at (-1,-1.5) {$x_{n-1}$};
\node[above] at (0,2) {$x_k$};
\node[above] at (.5,-1.75) {$L_0$};
\node[above] at (-.5,-1.75) {$L_n$};
\node[above left] at (1.7,-1) {$L_1$};
\node[above right] at (-1.7,-1) {$L_{n-1}$};
\node[below left] at (1,1.7) {$L_k$};
\node[below right] at (-1,1.7) {$L_{k+1}$};
\node at (0,0) {$D$};
\end{tikzpicture}
\caption{Immersed disk labeling conventions.}
\label{fig_disk}
\end{figure}

For the above to be well-defined, we need to know that the set of disks with fixed $L_0,\ldots,L_n$ and $x_0,\ldots,x_{n-1}$ is finite.
For this, the assumption that all curves involved are Legendrian is essential.
Suppose first that the $L_i$ are not infinitely wrapped around cylindrical ends, hence compact.
The area of an immersed disk $\phi:D\to S$ as above is
\[
\int_D\phi^*d\theta=\int_{\partial D}\phi^*\theta=-\sum_{k=0}^n\left(z(L_k,x_k)-z(L_k,x_{k-1})\right)
\]
where $z(L_k,x_k)$ is the $z$-coordinate of $L_k$ over $x_k$ and we use the fact that $\theta=-dz$ along Legendrian curves in $S\times \RR$.
In particular, since $z$ is bounded along all curves, the area of $D$ is bounded by some constant depending only on the $L_k$. But if there were infinitely many polygons, their areas would necessarily tend to infinity, regardless of whether the $L_k$ are Legendrian.
In the case of infinite wrapping there can be infinitely many disks, but only finitely many for fixed choice of intersection points $x_0,\ldots,x_{n-1}$, so the structure maps are still well-defined.

\subsubsection{Relation to ribbon graph approach}

In~\cite{hkk} the category $\mc F$ was defined using a choice of arc system, i.e. decomposition of the surface into polygons, and was also shown to be Morita equivalent to a certain homotopy colimit over a ribbon graph as a special case of the ``Lagrangian skeleton'' approach to Fukaya categories proposed by Kontsevich~\cite{kontsevich_sympalg}. 
For the approach using arc systems it is also convenient to allow $S$ to have corners and replace set $N\subset\partial S$ of marked points by marked intervals connecting corners of $S$.
Morphisms were defined in \cite{hkk} as paths along the boundary instead of explicitly perturbing the arcs and taking intersection points.
There are also some minor differences in convention between \cite{hkk} and the present paper, regarding the grading, direction of wrapping, and signs.

An arc system provides a generator of $\mc F$ given by the direct sum of all arcs (with arbitrary grading and trivial rank one local system).
Generation statements of this type are by now well established in the Fukaya category literature~\cite{aaeko,lee}.
The category $\mc F^{\vee}$ is equivalent to the Morita dual of $\mc F$: It is the category of functors from $\mc F$ to the category of finite dimensional complexes over $\KK$.
Finding the functor corresponding to an object $(L,E)$ in $\mc F^{\vee}$ as define above, i.e. to find the image under the Yoneda-embedding, amounts to intersecting $L$ with all the arcs in the arc system and counting disks which have one side on $L$ and the remaining sides on the arcs.
We will not use the equivalence of the two approaches directly, except in the case of the disk and annulus (see Section~\ref{sec_skein}).

\subsection{Immersed curves and Maurer--Cartan elements}
\label{subsec_mc}

If we allow immersed curves, in particular ones bounding immersed 1-gons (``teardrops''), we get a curved $A_\infty$-category.
According to the general philosophy, the true objects depend on an additional choice of Maurer--Cartan element.

Let $L\subset S\times \RR$ be an embedded graded Legendrian curve with boundary in $\partial S\subset N\times\RR$.
The projection $p_1(L)\subset S$ is not required to be embedded, but should have only transverse self-crossings.
Also fix a local system of vector spaces $E$ on $L$.
The additional choice alluded to above is a $\delta\in\Hom^1((L,E),(L,E))_{>0}$ satisfying the $A_\infty$ Maurer--Cartan equation.
In order the define or compute $\Hom((L,E),(L,E))$, we need to perturb $L$ slightly to some $L'$ so that $p_1(L')$ is transverse to $p_1(L)$. 
Fortunately, the positive part $\Hom((L,E),(L,E))_{>0}$ has an alternative definition which does not require us to perturb $L$.
This is already evident from Chekanov's definition in~\cite{chekanov} which does not require a choice of perturbation.
Namely, define $\Hom((L,E),(L,E))_{>0}$ as a direct sum over self-intersection points of $p_1(L)$ with summands as in Subsection~\ref{subsubsec_morphisms}, going from the upper branch over the self-intersection to the lower branch, and define the structure of a (non-unital) curved $A_\infty$-algebra on this filtered graded vector space as in Subsection~\ref{subsubsec_strmaps} by counting immersed disks with boundary on $L$ and corners at self-intersection points.

\begin{prop}
The curved $A_\infty$-algebra $\Hom((L,E),(L,E))_{>0}$ is ismorphic to $\Hom((L,E),(L',E))_{>0}$ for small perturbations $L'$ of $L$.
\end{prop}

\begin{proof}
The set of intersection points $p_1(L)\cap p_2(L')$ is naturally partitioned into two types: Self-intersection points of $L$ and new intersection points depending on the choice of $L'$.
For a small perturbation the difference in $z$-coordinate of $L$ and $L'$ over an intersection point of the latter type will be much smaller than over an intersection point of the former type, and we always assume that $L'$ is chosen so that this is case.
Also, it is convenient to choose the perturbation resulting in the minimal number of new intersection points, which is two for every component of $L$ diffeomorphic to $S^1$ and one for every component diffeomorphic to $[0,1]$.
When defining the filtration on $\Hom((L,E),(L',E))$ we regard the perturbation as having negligible effect on the $z$-coordinates, c.f. the discussion in Subsection~\ref{subsubsec_morphisms}.
Hence new intersection points contribute to $\Hom_{\geq 0}/\Hom_{>0}$ while self-intersections of $L$ contribute to $\Hom_{>0}$ (upper branch to lower branch) and $\Hom/\Hom_{\geq 0}$ (lower branch to upper branch), which are dual to each other.
The immersed disks which contribute to the structure constants of $\Hom((L,E),(L,E))_{>0}$ and  $\Hom((L,E),(L',E))_{>0}$ differ from each other by small perturbation  and are in particular in bijection with one another.
\end{proof}

Given a $\delta\in\Hom^1((L,E),(L,E))_{>0}$ satisfying the Maurer--Cartan equation~\eqref{MC_eqn}, the triple $(L,E,\delta)$ defines an object in $\mc F^\vee$.
Structure maps are obtained by twisting the structure maps $m_k$ coming from disk counts by the $\delta$'s.

For fixed $L$ we can consider the category $\mc C(L)_1$ whose objects are all choices of local system $E$ and Maurer--Cartan element $\delta\in\Hom^1((L,E),(L,E))_{>0}$, and whose morphisms are 
\[
\Hom_{\mc C(L)_1}((L,E,\delta),(L,E',\delta')):=\Hom((L,E),(L,E'))_{\geq 0}.
\]
This is called the \textit{augmentation category} in \cite{nrssz}.
It is clear from the definition that there is a natural functor $\mc C(L)_1\to\mc F^{\vee}$.
As the notation suggests, there is a bigger category $\mc C(L)$ where $E$ is allowed to have arbitrary rank and be $\ZZ$-graded, so $\mc C(L)$ is independent of the grading on $L$.
A category which turns out to be equivalent to $\mc C(L)$ was defined in \cite{stz}, for $L\subset\RR^3$, in terms of constructible sheaves with singular support on the front projection of $L$.

\subsubsection{Example: Trefoil}

To illustrate the definition we consider the simple but non-trivial example of the (right-handed) trefoil knot $L$, see Figure~\ref{fig_trefoil}.
Equip $L$ with a rank one local system $E$ with monodromy $\lambda\in\KK^\times$ which is trivialized away from some point on the right tear-drop in Figure~\ref{fig_trefoil}.
A basis of $\Hom((L,E),(L,E))_{>0}$ is given by the self-intersection points $u,v,x,y,z$ where $|u|=|v|=2$ and $|x|=|y|=|z|=1$.
Looking for possible immersed disks one finds the following non-zero terms of the $A_\infty$-structure:
\[
m_0=\lambda v+u,\quad m_1(x)=m_1(z)=u-v,\quad m_3(x,y,z)=u,\quad m_3(z,y,x)=-v
\]
The Maurer--Cartan equations for $\delta=ax+by+cz$ are thus
\[
\begin{cases}
1+a+c+abc=0 \\
\lambda-a-c-abc=0
\end{cases}
\]
which have a solution iff $\lambda=-1$, in which case $\mc{MC}(L,E)$ is a rational surface in $\AA^3$.

\begin{figure}[h]
\centering
\begin{tikzpicture}
\draw[thick] (-.1,.1) to [out=135,in=-135] (0,1) to [out=45,in=-180] (.5,1.2) to [out=0,in=125] (1.8,0) to [out=-55,in=-180] (3.5,-1.5) to [out=0,in=-90] (5,0) to [out=90,in=0] (3.5,1.5) to [out=180,in=55] (1.9,.1);
\draw[thick] (1.7,-.1) to [out=-125,in=0] (.5,-1.2) to [out=180,in=-45] (.1,-1.1);
\draw[thick] (-.1,-.9) to [out=135,in=-135] (0,0) to [out=45,in=-45] (.1,.9);
\draw[thick] (-.1,1.1) to [out=135,in=0] (-.5,1.2) to [out=180,in=55] (-1.7,.1);
\draw[thick] (-1.9,-.1) to [out=-125,in=0] (-3.5,-1.5) to [out=180,in=-90] (-5,0) to [out=90,in=180] (-3.5,1.5) to [out=0,in=125] (-1.8,0) to [out=-55,in=180] (-.5,-1.2) to [out=0,in=-135] (0,-1) to [out=45,in=-45] (.1,-.1);
\draw (-3.5,1.4) to (-3.5,1.6);
\draw (-3.5,-1.4) to (-3.5,-1.6);
\draw (-.5,1.1) to (-.5,1.3);
\draw (-.5,-1.1) to (-.5,-1.3);
\draw (3.5,1.4) to (3.5,1.6);
\draw (3.5,-1.4) to (3.5,-1.6);
\draw (.5,1.1) to (.5,1.3);
\draw (.5,-1.1) to (.5,-1.3);
\node[blue] at (.4,.25) {\scriptsize{0}};
\node[blue] at (2.25,1) {\scriptsize{1}};
\node[blue] at (2.25,-1) {\scriptsize{-1}};
\node[blue] at (-.4,.25) {\scriptsize{0}};
\node[blue] at (-2.25,1) {\scriptsize{-1}};
\node[blue] at (-2.25,-1) {\scriptsize{1}};
\node[blue,left] at (-5,0) {\scriptsize{0}};
\node[blue,right] at (5,0) {\scriptsize{0}};
\node[red,left] at (-1.8,0) {$u$};
\node[red,right] at (1.8,0) {$v$};
\node[red,above] at (0,1) {$x$};
\node[red,below] at (0,0) {$y$};
\node[red,below] at (0,-1) {$z$};
\end{tikzpicture}
\caption{Lagrangian projection of a Legendrian trefoil.}
\label{fig_trefoil}
\end{figure}
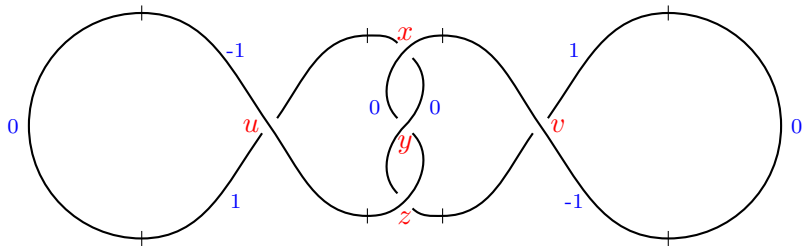

For given $\delta$ satisfying the Maurer--Cartan equation, $(L,E,\delta)$ represents the zero object in the Fukaya category.
Note however that the category $\mc C(L)$ is non-trivial and an interesting invariant of the knot.

\subsubsection{Relation to the Chekanov--Eliashberg DGA}
\label{subsubsec_ce}

We begin with a general algebraic construction.
Let $A$ be a curved $A_\infty$-algebra (curved $A_\infty$-category with a single object) such that $A=A_{>0}$.
Assume that $A$ is finite-dimensional, then $m_k=0$ for $k\gg 0$.
The bar resolution $BA$ of $A$ is the free coalgebra with underlying vector space
\[
BA:=\bigoplus_{k=0}^{\infty}\left(A[1]\right)^{\otimes k}
\]
and differential $d:BA\to BA$ of degree $1$ whose $A[1]\to (A[1])^{\otimes k}$-component is $m_k$.
The component-wise dual
\[
BA^{\vee}:=\bigoplus_{k=0}^{\infty}\left(A^{\vee}[-1]\right)^{\otimes k}
\]
is a differential graded algebra.

If we apply the above to $A:=\mathrm{Hom}((L,E),(L,E))_{>0}$ we obtain (an algebra isomorphic to) the Chekanov--Eliashberg algebra of $L$, more precisely the refined version of \cite{ees}, with formal parameters $t_i$ specialized to the monodromy of $E$. 
This is clear at least over $\ZZ/2$, since both definitions involve counting the same disks.
Presumably, the signs can be made to agree as well and the statement is true in arbitrary characteristic.

\subsection{Smoothing intersections}
\label{subsec_smoothing}

In this subsection we discuss the relation between smoothing of intersection points and Maurer--Cartan elements.
The case of an intersection point in the interior of $S$ is treated first, and the case of an intersection point on the boundary of $S$ second.

\subsubsection{Interior point}

Suppose $L=(I,\gamma,\tilde{\gamma})$ is a graded Legendrian curve where $\gamma:I\to S\times\RR$ is an embedding except for a transverse self-intersection point at $(x,z):=\gamma(t_0)=\gamma(t_1)$.
Also assume that the grading $\tilde{\gamma}$ satisfies
\[
i(L,t_0,L,t_1)=1
\]
then there are three ways of resolving the singularity  at $(x,z)$ by modifying $L$ in a neighborhood of that point:
\begin{enumerate}[1)]
\item
$L_+$ obtained by pushing the $t_0$-strand above the $t_1$-strand
\item
$L_-$ obtained by pushing the $t_0$-strand below the $t_1$-strand
\item
$L_s$ obtained by cutting both strands and reconnecting them with each other (only one of the two ways of doing this gives a \textit{graded} curve)
\end{enumerate} 
The graded Legendrian curves $L_+,L_-,L_s$ are well-defined up to isotopy in a neighborhood of $(x,z)$.
See Figure~\ref{fig_resolve}. 
The reader may wonder whether $L_s$ really lifts to a Legendrian curve.
When reconnecting the two strands in $S$ there could a difference in $z$-value of the endpoints lifts, however this difference must be small since $L$ has a self-intersection not just in $S$ but in $S\times\RR$.
This jump in $z$-value can be fixed by adding a bump in one direction or another to one of the strands.

\begin{figure}[h]
\centering
\begin{tikzpicture}
\draw[thick] (-5,1) to (-3,-1);
\draw[white,line width=1mm] (-5,-1) to (-3,1);
\draw[thick] (-5,-1) to [out=45,in=-135] (-4.7,-.7) to [out=45,in=-150] (-4,0) to [out=30,in=-135] (-3.3,.7) to [out=45,in=-135] (-3,1);

\node [below] at (-4,-1.2) {$L_+$};

\draw[thick] (-1,-1) to [out=45,in=-135] (-.7,-.7) to [out=45,in=-120] (0,0) to [out=60,in=-135] (.7,.7) to [out=45,in=-135] (1,1);
\draw[white,line width=1mm] (-1,1) to (1,-1);
\draw[thick] (-1,1) to (1,-1);

\node [below] at (0,-1.2) {$L_-$};

\draw[thick] (3,-1) to [out=45,in=-135] (3.3,-.7) to [out=45,in=-135] (3.7,-.4) to [out=45,in=-90] (3.9,0) to [out=90,in=-45] (3.7,.4) to [out=135,in=-45] (3.3,.7) to [out=135,in=-45] (3,1);
\draw[thick] (5,-1) to [out=135,in=-45] (4.7,-.7) to [out=135,in=-45] (4.3,-.4) to [out=135,in=-90] (4.1,0) to [out=90,in=-135] (4.3,.4) to [out=45,in=-135] (4.7,.7) to [out=45,in=-135] (5,1);
\node [below] at (4,-1.2) {$L_s$};

\end{tikzpicture}
\caption{Resolving a self-intersection.}
\label{fig_resolve}
\end{figure}
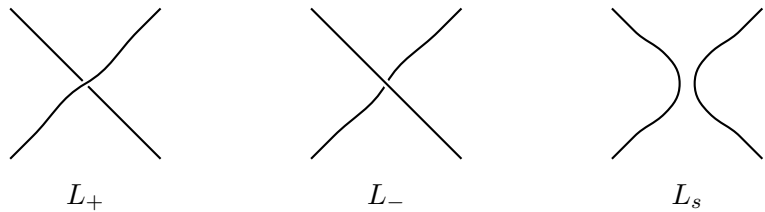

Suppose a local system of vector spaces $E$ is given on $L_-$ or equivalently $L_+$ and let $E_i$ be the fiber of $E$ over $t_i$, $i=1,2$.
For any choice of isomorphism $g:E_0\to E_1$ we also get a local system $E_g$ on $L_s$ by identifying $E_0$ and $E_1$ via $g$ for the left branch of $L_s$ and via $-g$ for the right branch of $L_s$.

\begin{prop}
\label{prop_resolve}
Let $L=(I,\gamma,\tilde{\gamma})\subset S\times\RR$ be a graded Legendrian curve satisfying the usual boundary conditions for $\mc F^{\vee}$ and which is embedded except for a single point $p=\gamma(t_0)=\gamma(t_1)$ of transverse self-intersection, and $E$ a local system of vector spaces on $I$.
Then for an invertible $g\in\Hom(E_0,E_1)$ there is a bijection
\[
\varphi:\mc{MC}(L_s,E_g)\longrightarrow \left\{ \delta\in\mc{MC}(L_-,E)\mid \delta_p=g\right\}
\]
where $\delta_p$ is the component of the Maurer--Cartan element $\delta\in\Hom^1((L_-,E),(L_-,E))_{>0}$ belonging to $p$.
Moreover, $\varphi$ preserves the isomorphism class in $\mc F^{\vee}$, i.e. $(L_s,E_g,\delta)$ is isomorphic to $(L_-,E,\varphi(\delta))$ in $\mc F^{\vee}$.
\end{prop}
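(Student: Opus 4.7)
The guiding principle is the classical fact that Lagrangian surgery at a degree-one transverse intersection is equivalent to turning on the corresponding Maurer--Cartan deformation; my plan is to translate this into the filtered curved $A_\infty$ language of Subsection~\ref{subsec_curved} and then lift the correspondence to $\mc F^{\vee}$ using Proposition~\ref{prop_isolift}. Choose a Darboux ball $B$ around $p$ small enough that $L_-$ and $L_s$ agree on the complement of $B$, and inside $B$ the two branches of $L_-$ appear as transverse arcs while $L_s$ is their standard surgery. Write $A_- := \Hom^{\bullet}((L_-,E),(L_-,E))_{>0}$ and $A_s := \Hom^{\bullet}((L_s,E_g),(L_s,E_g))_{>0}$; the only generator of $A_-$ that is absent from $A_s$ is the degree-one generator $e_p \in \Hom(E_0,E_1)[-1]$ supported at $p$, and all other self-intersections, teardrops, and local-system data match canonically outside $B$.

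Given $\delta' \in \mc{MC}(L_s, E_g)$, I define $\varphi(\delta')$ to have $p$-component equal to $g \cdot e_p$ and the same components as $\delta'$ at every other self-intersection. The central geometric ingredient is a bijection between immersed polygons bounded by $L_s$ with local system $E_g$ and immersed polygons bounded by $L_-$ with local system $E$ having an arbitrary number of corners at $p$: each time the boundary of an $L_s$-polygon crosses the surgered region inside $B$, one can push it back across the crossing to produce an extra corner at $p$. The twist by $-g$ on the right branch of $E_g$ is engineered precisely so that parallel transport is preserved under this operation, and the standard surgery sign matches the Koszul signs in the $A_\infty$-relations. Summing the $m^-_k$ contributions evaluated on $\varphi(\delta')$, grouped by the number of $p$-corners contracted against $g \cdot e_p$, reproduces the $m^s_k$ contributions evaluated on $\delta'$; the two Maurer--Cartan equations are therefore equivalent and $\varphi$ is a bijection onto the claimed subset.

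For the isomorphism assertion, the local surgery model furnishes a canonical semiclassical isomorphism between $(L_s, E_g)$ and $(L_-, E, g \cdot e_p)$ in $\mc C_0$: inside $B$ this is the well-known quasi-isomorphism between two transverse Lagrangians with connecting morphism $g$ and their surgery, while outside $B$ the two data coincide tautologically. Proposition~\ref{prop_isolift} then lifts this $\mc C_0$-isomorphism to an honest isomorphism in $\widetilde{\mc C}_{\geq 0}$ between $(L_s, E_g, \delta')$ and $(L_-, E, \varphi(\delta'))$, which passes to $\mc F^{\vee}$ under the functor $G$ of \eqref{curved_zigzag}. The main obstacle is the polygon bijection: one must track orientations, signs, and parallel transport, and -- most delicately -- polygons with multiple corners at $p$, which encode the higher-order terms of the MC equation and are what make the correspondence genuinely nonlinear. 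Once that combinatorial lemma is in hand, everything else is a formal consequence of the curved $A_\infty$ framework built in Subsection~\ref{subsec_curved}.
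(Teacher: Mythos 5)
Your overall philosophy (surgery at a degree-one crossing corresponds to switching on the Maurer--Cartan component there) is the right one, and your final step --- invoking Proposition~\ref{prop_isolift} to upgrade a zeroth-order identification to an isomorphism in $\mc F^{\vee}$ --- is exactly how the paper closes its argument. But the core of your proof is the asserted bijection between immersed polygons bounded by $L_s$ and immersed polygons bounded by $L_-$ with an arbitrary number of corners at $p$, and this is a genuine gap: you state it, acknowledge it as ``the main obstacle,'' and never prove it. As formulated it is also not quite right. Polygons with boundary on $p_1(L_-)$ whose boundary passes \emph{straight through} the crossing $p$ without a corner contribute to the structure maps of $(L_-,E)$ but have no naive counterpart on the smoothed curve $L_s$; conversely, an $L_s$-polygon whose boundary traverses the surgery region with the disk on the ``inner'' side does not pull back to a polygon with a convex corner at $p$. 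Sorting this out requires exactly the kind of case analysis (how the four branches at $p$ reconnect globally, which affects local systems, orientations and signs --- nine cases in the paper even for the leading-order statement) that your proposal compresses into one sentence. Because your explicit formula for $\varphi$ (``same components away from $p$, component $g$ at $p$'') is justified only by this lemma, both the well-definedness of $\varphi$ as a map into $\mc{MC}(L_-,E)$ and its bijectivity remain unestablished.

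It is worth noting how the paper avoids ever proving a global polygon correspondence. It splits polygons by the filtration into \emph{small} ones (area tending to zero with the perturbation) and \emph{big} ones, writes down explicit candidate morphisms $\alpha_1+\alpha_2$ and $\beta_1-\beta_2$ between $(L_-,E,\delta)$ and $(L_s,E_g)$, and verifies by enumerating the finitely many small polygons that these are closed and mutually inverse \emph{modulo} $\Hom_{>0}$, i.e.\ in $\mc C_0$. All big polygons --- the ones your bijection would have to control --- are then handled purely formally by the filtered deformation theory of Subsection~\ref{subsec_curved}: Proposition~\ref{prop_isolift} transports each $\delta'\in\mc{MC}(L_s,E_g)$ across this $\mc C_0$-isomorphism, and the observation that the remaining deformation spaces of the two objects literally coincide makes the transported Maurer--Cartan element unique, which is what gives the bijection $\varphi$. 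In particular the paper's $\varphi$ is defined by an iterative correction procedure and need not act as the identity on the components away from $p$. If you want to salvage your route, you would have to prove the polygon correspondence with all signs, local-system transports, and the straight-through/reflex-corner cases accounted for; otherwise the two-step ``leading order plus abstract lifting'' strategy is substantially more economical.
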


\begin{proof}
Immersed disks with boundary on $L,L_s,L_-$ and their perturbed copies can be divided into \textit{small disks} whose area goes to zero as the perturbation is chosen smaller and smaller, and the remaining \textit{big disks} whose area goes to some positive constant.
In the definition of the filtered category we regard the perturbation as negligible, so small disks contribute zeroth order terms and big disks contribute terms of positive order to the structure constants of $\mc F^{\vee}$. 
The idea of the proof is to first show the statement modulo higher order terms, ignoring big disks, then use the general algebraic machinery of transporting Maurer--Cartan elements to lift it to all orders.

Assume, without loss of generality, that there are no components of $L$ disjoint from $p$, i.e. ignore any components which are the same for $L,L_s,L_-$.
There are several cases to consider depending on how the four paths along $L$ starting at $p$ eventually link up.
Consider first the simplest case where both components of $L$ meeting at $p$ are not closed.
In this case we can assume that the local system $E$ on $L_-$ is trivial and $g=\delta_p=1$ is the identity matrix.
We claim that the morphisms $\alpha_1+\alpha_2$ and $\beta_1-\beta_2$ (see Figure~\ref{fig_alphabeta}) are inverses isomorphisms (to zeroth order) between $(L_-,E,\delta)$ and $(L_s,E_g)$.
Indeed, the two quadrilaterals shown in Figure~\ref{fig_m2alphabeta} give
\[
\widetilde{m}_2(\alpha_1+\alpha_2,\beta_1-\beta_2)=\pi_1+\pi_2=1
\]
and the two triangles and two quadrilaterals shown in Figure~\ref{fig_m2betalpha} give
\[
\widetilde{m}_2(\beta_1-\beta_2,\alpha_1+\alpha_2)=\gamma-\gamma+\pi_1+\pi_2=1.
\]
Also, $\alpha_1$, $\alpha_2$, $\beta_1$, and $\beta_2$ are clearly closed morphisms (to zeroth order).

\begin{figure}[h]
\centering
\begin{minipage}{.45\textwidth}
\begin{center}
\begin{tikzpicture}
\draw[thick] (-1.5,-1.5) to (1.5,1.5);
\draw[thick,->] (1.5,-1.5) to (-1.5,1.5);
\draw[thick,red,->] (-1.8,-1.2) to (-.3,.3) to [out=45,in=-45] (-.3,.9) to (-1.2,1.8);
\draw[thick,red] (1.2,-1.8) to (.3,-.9) to [out=135,in=-135] (.3,-.3) to (1.8,1.2);

\draw[fill] (0,0) circle [radius=.05];
\draw[fill,teal] (-.3,.3) circle [radius=.05];
\node[left] at (-.3,.3) {$\scriptstyle{\alpha_1}$};
\draw[fill,teal] (.3,-.3) circle [radius=.05];
\node[right] at (.3,-.3) {$\scriptstyle{\alpha_2}$};
\end{tikzpicture}
\end{center}
\end{minipage}
\begin{minipage}{.45\textwidth}
\begin{center}
\begin{tikzpicture}
\draw[thick] (-1.5,-1.5) to (1.5,1.5);
\draw[thick,->] (1.5,-1.5) to (-1.5,1.5);
\draw[thick,red,->] (-1.2,-1.8) to (-.3,-.9) to [out=45,in=-45] (-.3,-.3) to (-1.8,1.2);
\draw[thick,red] (1.8,-1.2) to (.3,.3) to [out=135,in=-135] (.3,.9) to (1.2,1.8);

\draw[fill] (0,0) circle [radius=.05];
\draw[fill,teal] (-.3,-.3) circle [radius=.05];
\node[left] at (-.3,-.3) {$\scriptstyle{\beta_1}$};
\draw[fill,teal] (.3,.3) circle [radius=.05];
\node[right] at (.3,.3) {$\scriptstyle{\beta_2}$};
\end{tikzpicture}
\end{center}
\end{minipage}
\caption{Morphisms $\alpha_1,\alpha_2$ from $(L_-,E)$ to $(L_s,E_g)$ and $\beta_1,\beta_2$ from $(L_s,E_g)$ to $(L_-,E)$}
\label{fig_alphabeta}
\end{figure}
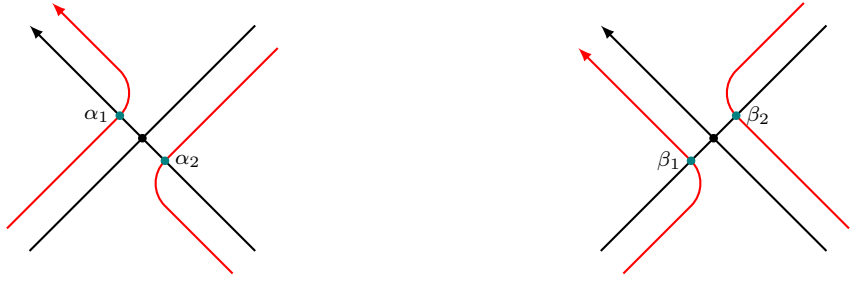

\begin{figure}[h]
\centering
\begin{tikzpicture}
\path[fill,pattern=north west lines,pattern color=gray] (0,0) to (-.3,.3) to (-.6,0) to (-.3,-.3);
\path[fill,pattern=north east lines,pattern color=gray] (0,0) to (.3,.3) to (.6,0) to (.3,-.3);

\draw[thick] (-1.5,-1.5) to (1.5,1.5);
\draw[thick,->] (1.5,-1.5) to (-1.5,1.5);
\draw[thick,red] (-1.2,-1.8) to (-.3,-.9) to [out=45,in=-45] (-.3,-.3) to (-1.8,1.2);
\draw[thick,red] (1.8,-1.2) to (.3,.3) to [out=135,in=-135] (.3,.9) to (1.2,1.8);
\draw[thick,magenta] (-1.8,-1.2) to (-.3,.3) to [out=45,in=-45] (-.3,.9) to (-1.2,1.8);
\draw[thick,magenta] (1.2,-1.8) to (.3,-.9) to [out=135,in=-135] (.3,-.3) to (1.8,1.2);

\draw[fill] (0,0) circle [radius=.05];
\draw[fill,teal] (-.3,.3) circle [radius=.05];
\draw[fill,teal] (-.3,-.3) circle [radius=.05];
\draw[fill,teal] (-.6,0) circle [radius=.05];
\draw[fill,teal] (.3,.3) circle [radius=.05];
\draw[fill,teal] (.3,-.3) circle [radius=.05];
\draw[fill,teal] (.6,0) circle [radius=.05];

\node[left] at (-.6,0) {$\scriptstyle{\pi_1}$};
\node[above] at (-.4,.4) {$\scriptstyle{\alpha_1}$};
\node[below] at (-.4,-.4) {$\scriptstyle{\beta_1}$};
\node[right] at (.6,0) {$\scriptstyle{\pi_2}$};
\node[above] at (.4,.4) {$\scriptstyle{\alpha_2}$};
\node[below] at (.4,-.4) {$\scriptstyle{\beta_2}$};
\end{tikzpicture}
\caption{$\widetilde{m}_2(\alpha_1,\beta_1)=\pi_1$ (left quadrilateral) and $\widetilde{m}_2(\alpha_2,\beta_2)=-\pi_2$ (right quadrilateral)}
\label{fig_m2alphabeta}
\end{figure}
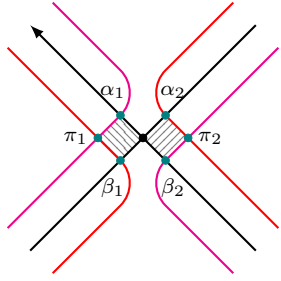

\begin{figure}[h]
\centering
\begin{minipage}{.45\textwidth}
\begin{center}
\begin{tikzpicture}
\path[fill,pattern=north west lines,pattern color=gray] (-.3,.3) to (-.6,.6) to [out=-135,in=90] (-.8,.1) to [out=0,in=-135] (-.3,.3);
\path[fill,pattern=north east lines,pattern color=gray] (-.3,.3) to (.3,.9) to [out=45,in=135] (.9,.9) to (1.2,.6) to [out=-135,in=45] (.6,-.6) to (-.3,.3);

\draw[thick] (-1.5,-1.5) to (1.5,1.5);
\draw[thick] (1.5,-1.5) to (-1.5,1.5);
\draw[thick,red] (-1.8,-1.2) to (-1,-.4) to [out=45,in=-90] (-.8,.1) to [out=90,in=-135] (-.6,.6) to [out=45,in=-45] (-.6,1.2) to (-1.2,1.8);
\draw[thick,red] (1.2,-1.8) to (.6,-1.2) to [out=135,in=-135] (.6,-.6) to [out=45,in=-135] (1.2,.6) to (1.8,1.2);
\draw[thick,gray] (-2.1,-.9) to (-1.3,-.1) to [out=45,in=180] (-.8,.1) to [out=0,in=-135] (-.3,.3) to (.3,.9) to [out=45,in=135] (.9,.9) to (1.2,.6) to [out=-45,in=-135] (1.8,.6) to (2.1,.9);
\draw[thick,gray] (.9,-2.1) to (0,-1.2) to [out=135,in=-135] (.3,-.3) to [out=45,in=-45] (.6,.6) to (-.9,2.1);

\draw[fill] (0,0) circle [radius=.05];
\draw[fill,gray] (.3,.9) circle [radius=.05];
\draw[fill,teal] (-.8,.1) circle [radius=.05];
\node[below right] at (-.9,.2) {$\scriptstyle{\beta_1}$};
\draw[fill,teal] (-.6,.6) circle [radius=.05];
\node[left] at (-.6,.6) {$\scriptstyle{\alpha_1}$};
\draw[fill,teal] (-.3,.3) circle [radius=.05];
\node[below] at (-.3,.3) {$\scriptstyle{\gamma}$};
\draw[fill,teal] (.3,-.3) circle [radius=.05];
\node[below] at (.3,-.3) {$\scriptstyle{\pi_1}$};
\draw[fill,teal] (.6,-.6) circle [radius=.05];
\node[right] at (.6,-.6) {$\scriptstyle{\alpha_2}$};
\draw[fill,teal] (.9,.9) circle [radius=.05];
\node[above] at (.9,.9) {$\scriptstyle{\pi_2}$};
\draw[fill,teal] (1.2,.6) circle [radius=.05];
\node[below] at (1.2,.6) {$\scriptstyle{\beta_2}$};
\end{tikzpicture}
\end{center}
\end{minipage}
\begin{minipage}{.45\textwidth}
\begin{center}
\begin{tikzpicture}
\path[fill,pattern=north east lines,pattern color=gray] (.3,-.3) to [out=45,in=-45] (.6,.6) to (.3,.9) to [out=45,in=135] (.9,.9) to (1.2,.6) to [out=-135,in=45] (.6,-.6) to (.3,-.3);
\path[fill,pattern=north west lines,pattern color=gray] (0,0) to (.9,.9) to (1.2,.6) to [out=-135,in=45] (.6,-.6) to (0,0);

\draw[thick] (-1.5,-1.5) to (1.5,1.5);
\draw[thick,->] (1.5,-1.5) to (-1.5,1.5);
\draw[thick,red] (-1.8,-1.2) to (-1,-.4) to [out=45,in=-90] (-.8,.1) to [out=90,in=-135] (-.6,.6) to [out=45,in=-45] (-.6,1.2) to (-1.2,1.8);
\draw[thick,red] (1.2,-1.8) to (.6,-1.2) to [out=135,in=-135] (.6,-.6) to [out=45,in=-135] (1.2,.6) to (1.8,1.2);
\draw[thick,gray] (-2.1,-.9) to (-1.3,-.1) to [out=45,in=180] (-.8,.1) to [out=0,in=-135] (-.3,.3) to (.3,.9) to [out=45,in=135] (.9,.9) to (1.2,.6) to [out=-45,in=-135] (1.8,.6) to (2.1,.9);
\draw[thick,gray,->] (.9,-2.1) to (0,-1.2) to [out=135,in=-135] (.3,-.3) to [out=45,in=-45] (.6,.6) to (-.9,2.1);

\draw[fill] (0,0) circle [radius=.05];
\draw[fill,gray] (.3,.9) circle [radius=.05];
\draw[fill,teal] (-.8,.1) circle [radius=.05];
\node[below right] at (-.9,.2) {$\scriptstyle{\beta_1}$};
\draw[fill,teal] (-.6,.6) circle [radius=.05];
\node[left] at (-.6,.6) {$\scriptstyle{\alpha_1}$};
\draw[fill,teal] (-.3,.3) circle [radius=.05];
\node[below] at (-.3,.3) {$\scriptstyle{\gamma}$};
\draw[fill,teal] (.3,-.3) circle [radius=.05];
\node[below] at (.3,-.3) {$\scriptstyle{\pi_1}$};
\draw[fill,teal] (.6,-.6) circle [radius=.05];
\node[right] at (.6,-.6) {$\scriptstyle{\alpha_2}$};
\draw[fill,teal] (.9,.9) circle [radius=.05];
\node[above] at (.9,.9) {$\scriptstyle{\pi_2}$};
\draw[fill,teal] (1.2,.6) circle [radius=.05];
\node[below] at (1.2,.6) {$\scriptstyle{\beta_2}$};
\end{tikzpicture}
\end{center}
\end{minipage}
\caption{$\widetilde{m}_2(\beta_1,\alpha_1)=\gamma$ (left triangle in left picture) and $\widetilde{m}_2(\beta_2,\alpha_2)=\gamma-\pi_1-\pi_2$ (right triangle in left picture and two quadrilaterals in right picture)}
\label{fig_m2betalpha}
\end{figure}
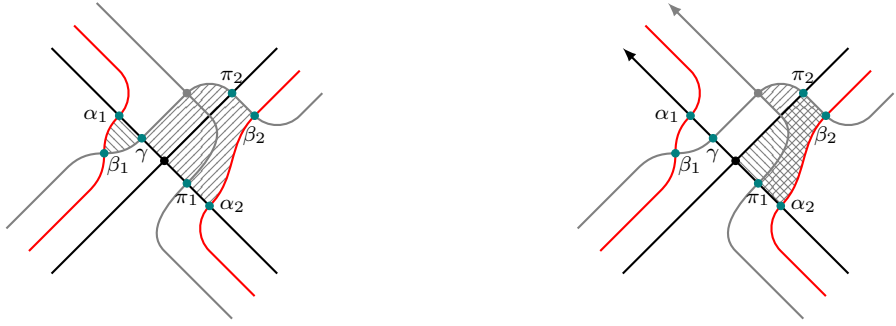

If two of the four paths starting at $p$ eventually meet, then there is one additional crossing of the projections of $L_-$ and $L_s$ to $S$. 
There are six different cases to consider, see the top two rows in Figure~\ref{fig_9cases1}.
If all four of the paths starting at $p$ eventually meet, then there are two additional crossings of the projections of $L_-$ and $L_s$ to $S$. 
There are three different cases to consider, see the bottom row in Figure~\ref{fig_9cases1}.
In all cases trivialize $E$ near $p$ so that $g=1$.
When constructing $E_g$ use the same trivialization and place the additional monodromy sign $-1$ far from $p$ along the path going towards the upper right corner in the figures.
A tedious but straightforward checking of all the nine cases shows that $\alpha_1+\alpha_2$ and $\beta_1-\beta_2$ are closed morphisms (to zeroth order).
In each case there is a pair of a bigon/triangle with vertex $p$ giving contributions to $\widetilde{m}_1$ which are negatives of each other.
The argument that the two morphisms are inverses of each other stays the same, since the additional intersection points give morphisms of degree $1$ as vertices of small disks and thus do not affect the value of $\widetilde{m}_2$ for degree reasons.

\begin{figure}[h]
\begin{center}
\begin{minipage}{.3\textwidth}
\begin{tikzpicture}
\draw[thick] (-1,-1) to (1,1) to [out=45,in=0] (0,2) to [out=180,in=135] (-1,1) to (1,-1);
\draw[thick,red] (-1.15,-.85) to (-.15,.15) to [out=45,in=-45] (-.15,.45) to (-.85,1.15) to [out=135,in=180] (-.15,1.85) to [out=0,in=180] (.15,2.15) to [out=0,in=45] (1.15,.85) to (.15,-.15) to [out=-135,in=135] (.15,-.45) to (.85,-1.15);

\draw[fill] (0,0) circle [radius=.05];
\draw[fill,teal] (-.15,.15) circle [radius=.05];
\node[left] at (-.15,.15) {$\scriptstyle{\alpha_1}$};
\draw[fill,teal] (.15,-.15) circle [radius=.05];
\node[right] at (.15,-.15) {$\scriptstyle{\alpha_2}$};
\draw[fill,magenta] (0,2) circle [radius=.05];
\end{tikzpicture}
\end{minipage}
\begin{minipage}{.3\textwidth}
\begin{tikzpicture}
\draw[thick] (-1,-1) to (1,1) to [out=45,in=90] (2,0) to [out=-90,in=-45] (1,-1) to (-1,1);
\draw[thick,red] (-1.15,-.85) to (-.15,.15) to [out=45,in=-45] (-.15,.45) to (-.85,1.15);
\draw[thick,red] (.85,-1.15) to (.15,-.45) to [out=135,in=-135] (.15,-.15) to (1.15,.85) to [out=45,in=90] (1.85,.15) to [out=-90,in=90] (2.15,-.15) to [out=-90,in=-45] (.85,-1.15);

\draw[fill] (0,0) circle [radius=.05];
\draw[fill,teal] (-.15,.15) circle [radius=.05];
\node[left] at (-.15,.15) {$\scriptstyle{\alpha_1}$};
\draw[fill,teal] (.15,-.15) circle [radius=.05];
\node[right] at (.15,-.15) {$\scriptstyle{\alpha_2}$};\\
\draw[fill,magenta] (2,0) circle [radius=.05];
\end{tikzpicture}
\end{minipage}
\begin{minipage}{.3\textwidth}
\begin{tikzpicture}
\draw[thick] (-.85,-.85) to (1,1) to [out=45,in=45] (1.5,-1.5) to [out=-135,in=-135] (-.85,-.85);
\draw[thick] (1,-1) to (-1,1);
\draw[thick,red] (.85,-1.15) to (.15,-.45) to [out=135,in=-135] (.15,-.15) to (1,.7) to [out=45,in=45] (1.5,-1.3) to [out=-135,in=45] (1.5,-1.7) to [out=-135,in=-135] (-1.15,-.85) to (-.15,.15) to [out=45,in=-45] (-.15,.45) to (-.85,1.15);

\draw[fill] (0,0) circle [radius=.05];
\draw[fill,teal] (-.15,.15) circle [radius=.05];
\node[left] at (-.15,.15) {$\scriptstyle{\alpha_1}$};
\draw[fill,teal] (.15,-.15) circle [radius=.05];
\node[right] at (.15,-.15) {$\scriptstyle{\alpha_2}$};
\draw[fill,magenta] (1.5,-1.5) circle [radius=.05];
\end{tikzpicture}
\end{minipage}

\begin{minipage}{.3\textwidth}
\begin{tikzpicture}
\draw[thick] (-1,1) to (1,-1) to [out=-45,in=0] (0,-2) to [out=180,in=-135] (-1,-1) to (1,1);
\draw[thick,red] (-.85,1.15) to (-.15,.45) to [out=-45,in=45] (-.15,.15) to (-1.15,-.85) to [out=-135,in=180] (-.15,-2.15) to [out=0,in=180] (.15,-1.85) to [out=0,in=-45] (.85,-1.15) to (.15,-.45) to [out=135,in=-135] (.15,-.15) to (1.15,.85);

\draw[fill] (0,0) circle [radius=.05];
\draw[fill,teal] (-.15,.15) circle [radius=.05];
\node[left] at (-.15,.15) {$\scriptstyle{\alpha_1}$};
\draw[fill,teal] (.15,-.15) circle [radius=.05];
\node[right] at (.15,-.15) {$\scriptstyle{\alpha_2}$};
\draw[fill,magenta] (0,-2) circle [radius=.05];
\end{tikzpicture}
\end{minipage}
\begin{minipage}{.3\textwidth}
\begin{tikzpicture}
\draw[thick] (-1,-1) to (1,1);
\draw[thick] (1,-1) to (-1,1) to [out=135,in=90] (-2,0) to [out=-90,in=-135] (-1,-1) to (1,1);;
\draw[thick,red] (-1.15,-.85) to (-.15,.15) to [out=45,in=-45] (-.15,.45) to (-.85,1.15) to [out=135,in=90] (-2.15,.15) to [out=-90,in=90] (-1.85,-.15) to [out=-90,in=-135] (-1.15,-.85);
\draw[thick,red] (.85,-1.15) to (.15,-.45) to [out=135,in=-135] (.15,-.15) to (1.15,.85);

\draw[fill] (0,0) circle [radius=.05];
\draw[fill,teal] (-.15,.15) circle [radius=.05];
\node[left] at (-.15,.15) {$\scriptstyle{\alpha_1}$};
\draw[fill,teal] (.15,-.15) circle [radius=.05];
\node[right] at (.15,-.15) {$\scriptstyle{\alpha_2}$};
\draw[fill,magenta] (-2,0) circle [radius=.05];
\end{tikzpicture}
\end{minipage}
\begin{minipage}{.3\textwidth}
\begin{tikzpicture}
\draw[thick] (-1,-1) to (1,1);
\draw[thick] (.85,-.85) to (-1,1) to [out=135,in=135] (1.5,1.5) to [out=-45,in=-45] (.85,-.85);
\draw[thick,red] (-1.15,-.85) to (-.15,.15) to [out=45,in=-45] (-.15,.45) to (-.7,1) to [out=135,in=135] (1.3,1.5) to [out=-45,in=135] (1.7,1.5) to [out=-45,in=-45] (.85,-1.15) to (.15,-.45) to [out=135,in=-135] (.15,-.15) to (1.15,.85);

\draw[fill] (0,0) circle [radius=.05];
\draw[fill,teal] (-.15,.15) circle [radius=.05];
\node[left] at (-.15,.15) {$\scriptstyle{\alpha_1}$};
\draw[fill,teal] (.15,-.15) circle [radius=.05];
\node[right] at (.15,-.15) {$\scriptstyle{\alpha_2}$};
\draw[fill,magenta] (1.5,1.5) circle [radius=.05];
\end{tikzpicture}
\end{minipage}

\begin{minipage}{.3\textwidth}
\begin{tikzpicture}
\draw[thick] (-1,1) to (1,-1) to [out=-45,in=0] (0,-2) to [out=180,in=-135] (-1,-1) to (1,1) to [out=45,in=0] (0,2) to [out=180,in=135] (-1,1);
\draw[thick,red] (-.85,1.15) to (-.15,.45) to [out=-45,in=45] (-.15,.15) to (-1.15,-.85) to [out=-135,in=180] (-.15,-2.15) to [out=0,in=180] (.15,-1.85) to [out=0,in=-45] (.85,-1.15) to (.15,-.45) to [out=135,in=-135] (.15,-.15) to (1.15,.85) to [out=45,in=0] (.15,2.15) to [out=180,in=0] (-.15,1.85) to [out=180,in=135] (-.85,1.15);

\draw[fill] (0,0) circle [radius=.05];
\draw[fill,teal] (-.15,.15) circle [radius=.05];
\node[left] at (-.15,.15) {$\scriptstyle{\alpha_1}$};
\draw[fill,teal] (.15,-.15) circle [radius=.05];
\node[right] at (.15,-.15) {$\scriptstyle{\alpha_2}$};
\draw[fill,magenta] (0,-2) circle [radius=.05];
\draw[fill,magenta] (0,2) circle [radius=.05];
\end{tikzpicture}
\end{minipage}
\begin{minipage}{.3\textwidth}
\begin{tikzpicture}
\draw[thick] (-1,-1) to (1,1) to [out=45,in=90] (2,0) to [out=-90,in=-45] (1,-1) to (-1,1) to [out=135,in=90] (-2,0) to [out=-90,in=-135] (-1,-1);
\draw[thick,red] (-1.15,-.85) to (-.15,.15) to [out=45,in=-45] (-.15,.45) to (-.85,1.15) to [out=135,in=90] (-2.15,.15) to [out=-90,in=90] (-1.85,-.15) to [out=-90,in=-135] (-1.15,-.85);
\draw[thick,red] (.85,-1.15) to (.15,-.45) to [out=135,in=-135] (.15,-.15) to (1.15,.85) to [out=45,in=90] (1.85,.15) to [out=-90,in=90] (2.15,-.15) to [out=-90,in=-45] (.85,-1.15);

\draw[fill] (0,0) circle [radius=.05];
\draw[fill,teal] (-.15,.15) circle [radius=.05];
\node[left] at (-.15,.15) {$\scriptstyle{\alpha_1}$};
\draw[fill,teal] (.15,-.15) circle [radius=.05];
\node[right] at (.15,-.15) {$\scriptstyle{\alpha_2}$};
\draw[fill,magenta] (-2,0) circle [radius=.05];
\draw[fill,magenta] (2,0) circle [radius=.05];
\end{tikzpicture}
\end{minipage}
\begin{minipage}{.3\textwidth}
\begin{tikzpicture}
\draw[thick] (-.85,-.85) to (1,1) to [out=45,in=45] (1.5,-1.5) to [out=-135,in=-135] (-.85,-.85);
\draw[thick] (.85,-.85) to (-1,1) to [out=135,in=135] (1.5,1.5) to [out=-45,in=-45] (.85,-.85);
\draw[thick,red] (-1.15,-.85) to (-.15,.15) to [out=45,in=-45] (-.15,.45) to (-.7,1) to [out=135,in=135] (1.3,1.5) to [out=-45,in=135] (1.7,1.5) to [out=-45,in=-45] (.85,-1.15) to (.15,-.45) to [out=135,in=-135] (.15,-.15) to (1,.7) to [out=45,in=45] (1.5,-1.3) to [out=-135,in=45] (1.5,-1.7) to [out=-135,in=-135] (-1.15,-.85);

\draw[fill] (0,0) circle [radius=.05];
\draw[fill,teal] (-.15,.15) circle [radius=.05];
\node[left] at (-.15,.15) {$\scriptstyle{\alpha_1}$};
\draw[fill,teal] (.15,-.15) circle [radius=.05];
\node[right] at (.15,-.15) {$\scriptstyle{\alpha_2}$};
\draw[fill,magenta] (1.5,1.5) circle [radius=.05];
\draw[fill,magenta] (1.5,-1.5) circle [radius=.05];
\end{tikzpicture}
\end{minipage}
\end{center}
\caption{Showing that $\alpha_1+\alpha_2$ is closed. There are similar pictures for $\beta_1-\beta_2$.}
\label{fig_9cases1}
\end{figure}
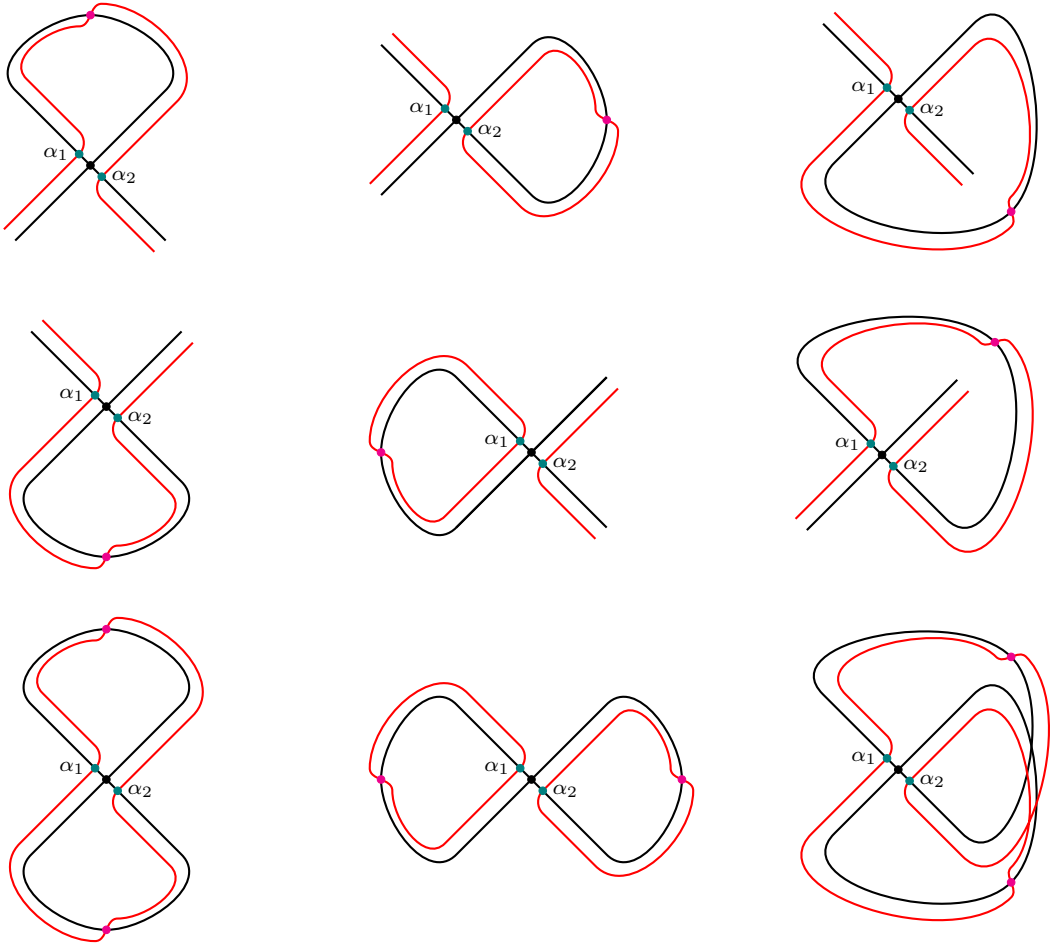

To complete the proof we apply Proposition~\ref{prop_isolift} with $X=(L_-,E,\delta_p)$, $Y=(L_s,E_g)$, and $f_0=\alpha_1+\alpha_2$. 
In general the proposition would not give a unique $\gamma\in\mc{MC}(Y)$, however in our case we have $\Hom(X,X)_{>0}=\Hom(Y,Y)_{>0}$ and the map $\phi$ constructed in the proof of the proposition is an isomorphism, not just a homotopy equivalence.
\end{proof}

\subsubsection{Boundary point}

Suppose $L=(I,\gamma,\tilde{\gamma})$ is a graded Legendrian curve where $\gamma:I\to S\times\RR$ is an embedding except for a transverse self-intersection point at $(x,z):=\gamma(t_0)=\gamma(t_1)$ with $x\in N\subset\partial S$ on the boundary.
We order $t_0,t_1$ so that in the clockwise order the strand of $L$ belonging to $t_0$ comes before the strand belonging to $t_1$, i.e. the $t_0$ strand is the upper one in Figure~\ref{fig_resolve_b}.
Assume furthermore that the grading $\tilde{\gamma}$ is such that
\[
i(L,t_0,L,t_1)=1
\]
then there are three ways of resolving the singularity at $(x,z)$ by modifying $L$ in a neighborhood of that point:
\begin{enumerate}[1)]
\item
$L_+$ obtained by pushing the $t_0$-strand above the $t_1$-strand
\item
$L_-$ obtained by pushing the $t_0$-strand below the $t_1$-strand
\item
$L_s$ obtained by detaching both strands from the boundary and connecting them with each other
\end{enumerate} 
See Figure~\ref{fig_resolve}. 
The graded Legendrian curves $L_+,L_-,L_s$ are well-defined up to isotopy in a neighborhood of $(x,z)$.

\begin{figure}[h]
\centering
\begin{tikzpicture}
\draw[thick] (-4,0) to (-3,-1);
\draw[white,line width=1.5mm] (-5,-1) to (-3,1);
\draw[thick] (-4,0) to [out=30,in=-135] (-3.3,.7) to [out=45,in=-135] (-3,1);
\draw[densely dotted] (-4,-1) to (-4,1);
\node [below] at (-4,-1.2) {$L_+$};

\draw[thick] (0,0) to [out=60,in=-135] (.7,.7) to [out=45,in=-135] (1,1);
\draw[white,line width=1.5mm] (-1,1) to (1,-1);
\draw[thick] (0,0) to (1,-1);
\draw[densely dotted] (0,-1) to (0,1);
\node [below] at (0,-1.2) {$L_-$};

\draw[thick] (5,-1) to [out=135,in=-45] (4.7,-.7) to [out=135,in=-45] (4.3,-.4) to [out=135,in=-90] (4.1,0) to [out=90,in=-135] (4.3,.4) to [out=45,in=-135] (4.7,.7) to [out=45,in=-135] (5,1);
\draw[densely dotted] (4,-1) to (4,1);
\node [below] at (4,-1.2) {$L_s$};
\end{tikzpicture}
\caption{Resolving a self-intersection at the boundary.}
\label{fig_resolve_b}
\end{figure}
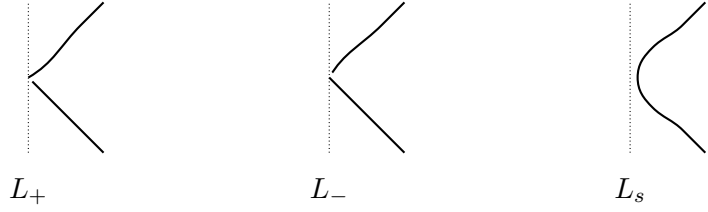

Suppose a local system of vector spaces $E$ is given on $L_-$ or equivalently $L_+$ and let $E_i$ be the fiber of $E$ over $t_i$, $i=1,2$.
For any choice of isomorphism $g:E_0\to E_1$ we also get a local system $E_g$ on $L_s$ by identifying $E_0$ and $E_1$ via $-g$.
In this situation we have the following boundary version of Proposition~\ref{prop_resolve}.
The proof is completely analogous. The corresponding figures are as in the proof of Proposition~\ref{prop_resolve} but with the left half removed.

\begin{prop}
\label{prop_resolve_b}
Let $L=(I,\gamma,\tilde{\gamma})\subset S\times\RR$ be a graded Legendrian curve as above, and $E$ a local system of vector spaces on $I$.
Then for an invertible $g\in\Hom(E_0,E_1)$ there is an isomorphism
\[
\varphi:\mc{MC}(L_s,E_g)\longrightarrow \left\{ \delta\in\mc{MC}(L_-,E)\mid \delta_p=g\right\}
\]
where $\delta_p$ is the component of the Maurer--Cartan element $\delta\in\Hom^1((L_-,E),(L_-,E))_{>0}$ belonging to $p$.
Moreover, $\varphi$ preserves the isomorphism class in $\mc F^{\vee}$, i.e. $(L_s,E_g,\delta)$ is isomorphic to $(L_-,E,\varphi(\delta))$ in $\mc F^{\vee}$.
\end{prop}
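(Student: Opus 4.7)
The plan is to mirror the proof of Proposition~\ref{prop_resolve}, treating the boundary case as the result of cutting the interior local picture in half along $\partial S \times \RR$. As before, I would first split immersed disks contributing to the structure maps into \emph{small disks} (whose area vanishes with the perturbation) and \emph{big disks} (area bounded below by a positive constant); only small disks contribute modulo $\Hom_{>0}$, so everything reduces to a local computation in a half-disk neighborhood of $p \in N \times \RR$, possibly together with one or two additional intersection points if the two strands at $p$ belong to the same component of $L$.

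Next, after trivializing $E$ near $p$ so that $g = 1$, I would build analogs of the morphisms $\alpha, \beta$ from Proposition~\ref{prop_resolve}. Because one side of the local picture is now replaced by the boundary arc, only a single intersection point remains on each side between $L_-$ and a small pushoff of $L_s$ (in contrast to the pair $\alpha_1, \alpha_2$ in the interior case), yielding morphisms $\alpha \in \Hom((L_-, E), (L_s, E_g))$ and $\beta \in \Hom((L_s, E_g), (L_-, E))$. Using the right-half analogs of the bigons/triangles of Figures~\ref{fig_m2alphabeta}--\ref{fig_m2betalpha}, I would verify that $\widetilde m_1(\alpha) = \widetilde m_1(\beta) = 0$ and that $\widetilde m_2(\alpha, \beta) = 1$, $\widetilde m_2(\beta, \alpha) = 1$, all modulo $\Hom_{>0}$. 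The case analysis is simpler than in the interior: rather than nine configurations one has only three, depending on whether the two strands at $p$ lie on distinct components or are joined into one in either of two ways.

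Finally, I would apply Proposition~\ref{prop_isolift} with $X = (L_-, E, \delta_p)$, $Y = (L_s, E_g)$, and $f_0 = \alpha$. Since $\Hom(X,X)_{>0} = \Hom(Y,Y)_{>0}$, the same argument as in the interior case shows that the map $\varphi$ appearing in the proof of Proposition~\ref{prop_isolift} is an actual isomorphism rather than merely a homotopy equivalence, so the resulting $\gamma \in \mc{MC}(Y)$ is uniquely determined; inverting this association gives the required bijection $\varphi : \mc{MC}(L_s, E_g) \to \{\delta \in \mc{MC}(L_-, E) \mid \delta_p = g\}$ together with the isomorphism $(L_s, E_g, \gamma) \cong (L_-, E, \varphi(\gamma))$ in $\mc F^{\vee}$ witnessed by $\alpha$. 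The main obstacle is purely a bookkeeping one: ensuring that the single $-g$ identification used to define $E_g$ at the boundary (rather than the interior's combined $g, -g$ identifications on the two sides) still yields cancelling pairs of small bigons/triangles so that $\alpha$ and $\beta$ are closed to zeroth order in every boundary configuration, and that the additional intersection points, being of degree $1$, do not contaminate $\widetilde m_2$ for degree reasons.
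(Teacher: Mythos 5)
Your proposal is correct and follows exactly the route the paper takes: the paper's own proof consists of the single remark that the argument is completely analogous to Proposition~\ref{prop_resolve} with the left half of each figure removed, which is precisely what you carry out (one morphism $\alpha$ and one $\beta$ instead of pairs, the reduced case analysis, the single $-g$ identification, and the final application of Proposition~\ref{prop_isolift}). Your write-up is in fact more detailed than the paper's.
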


\section{Legendrian skein algebras}
\label{sec_skein}

This section contains the heart of the paper.
We introduce the skein algebra in Subsection~\ref{subsec_skeinrel} using the relation given in the introduction.
Some comments about the front projection are also found here.
In Subsection~\ref{subsec_skeinhall} we define the homomorphism $\Phi$ from the skein algebra to the Hall algebra of the Fukaya category.
The main point is to show that the relation \eqref{gs_1} holds, which uses the result of the previous section.
Subsection~\ref{subsec_tangles} discusses Legendrian tangles in preparation for the final two subsections, where we specialize to the case of a disk and an annulus, respectively, in which case we can say more about $\Phi$.

\subsection{Skein relations}
\label{subsec_skeinrel}

Fix $S,N,\theta,\eta$ as before.
Thus, $S$ is a compact surface with boundary, $N\subset \partial S$ a finite set, $\theta$ a Liouville form on $S$ and $\eta$ a grading on $S$.
From this data we get a threefold $S\times\RR$ with contact 1-form $dz+\theta$.
A \textbf{graded Legendrian link} is a graded Legendrian curve $L$ embedded in $S\times\RR$ with $\partial L\subset N\times\RR$.
The skein module $\mathrm{Skein}(S,N,\theta,\eta)$ is the $R:=\ZZ[q^{\pm},(q-1)^{-1}]$-module generated by isotopy classes of graded Legendrian links modulo the submodule generated by the skein relations \eqref{gs_1}, \eqref{gs_2}, and \eqref{gs_3}.
These are linear relations between links which are identical except in a small ball in $S\times\RR$ where they differ as shown.
Near $N\times\RR$ we impose in addition the \textit{boundary skein relations} \eqref{gs_1b} and \eqref{gs_2b}. 

As pointed out in the introduction, the Skein \textit{module} can be defined in the same way for any graded contact threefold, at least in the case $N=\emptyset$ without the boundary skein relations.
Since our threefold has the product form $S\times\RR$, there is an associative unital product on $\mathrm{Skein}(M,N,\theta,\eta)$ defined on links by
\[
L_1L_2:=\text{stack }L_2\text{ on top of }L_1
\]
i.e. by translating $L_2$ in the positive $z$-direction until the maximum value of $z$ on $L_1$ is less than the minimum value of $z$ on $L_2$, then taking the disjoint union. 
The unit is represented by the empty link.

\begin{remark}
While Legendrian skein modules have not previously been explicitly considered in the literature, the defining relations are well-known to experts in Legendrian knot theory and appear (in slightly different but equivalent form) for example in work of Rutherford~\cite{rutherford06}.
Non-Legendrian skein modules, on the other hand, have been studied extensively.
Let us mention here just one particularly intriguing results due to Turaev~\cite{turaev}, that the HOMFLY-PT skein algebra of a product threefold $S\times [0,1]$ is a quantization of the Goldman Lie algebra, which acts on moduli spaces of local systems. 
\end{remark}

We explain briefly how to relate our skein relations to the ones in \cite{rutherford06} which give the graded ruling polynomial as it is usually defined in the Legendrian knot theory literature. 
Given a graded Legendrian link $L$ in $S\times \RR$ without boundary define its \textit{writhe} as the following signed number of crossings of the projection to $S$:
\begin{equation}
w(L):=
\#\left(
\begin{tikzpicture}[baseline=-\dimexpr\fontdimen22\textfont2\relax,scale=.6]
\draw[thick] (-.5,.5) to (.5,-.5);
\draw[line width=1.5mm,white] (-.5,-.5) to (.5,.5);
\draw[thick] (-.5,-.5) to (.5,.5);
\node[right,blue] at (.4,.5) {\scriptsize{n+2k}};
\node[right,blue] at (.4,-.5) {\scriptsize{n}};
\end{tikzpicture}
\right)+\#\left(
\begin{tikzpicture}[baseline=-\dimexpr\fontdimen22\textfont2\relax,scale=.6]
\draw[thick] (-.5,-.5) to (.5,.5);
\draw[line width=1.5mm,white] (-.5,.5) to (.5,-.5);
\draw[thick] (-.5,.5) to (.5,-.5);
\node[right,blue] at (.4,.5) {\scriptsize{n+2k}};
\node[right,blue] at (.4,-.5) {\scriptsize{n+1}};
\end{tikzpicture}
\right)-\#\left(
\begin{tikzpicture}[baseline=-\dimexpr\fontdimen22\textfont2\relax,scale=.6]
\draw[thick] (-.5,.5) to (.5,-.5);
\draw[line width=1.5mm,white] (-.5,-.5) to (.5,.5);
\draw[thick] (-.5,-.5) to (.5,.5);
\node[right,blue] at (.4,.5) {\scriptsize{n+2k}};
\node[right,blue] at (.4,-.5) {\scriptsize{n+1}};
\end{tikzpicture}
\right)-\#\left(
\begin{tikzpicture}[baseline=-\dimexpr\fontdimen22\textfont2\relax,scale=.6]
\draw[thick] (-.5,-.5) to (.5,.5);
\draw[line width=1.5mm,white] (-.5,.5) to (.5,-.5);
\draw[thick] (-.5,.5) to (.5,-.5);
\node[right,blue] at (.4,.5) {\scriptsize{n+2k}};
\node[right,blue] at (.4,-.5) {\scriptsize{n}};
\end{tikzpicture}
\right)
\end{equation}
This is easily seen to be an isotopy invariant by checking Reidemeister moves.
If we replace $L$ by $q^{\frac{1}{2}w(L)}L$ then the skein relation \eqref{gs_1} becomes, after multiplication with $q^{\pm\frac{1}{2}}$,
\begin{equation}\label{gs_1alt}
\begin{tikzpicture}[baseline=-\dimexpr\fontdimen22\textfont2\relax,scale=.6]
\draw[thick] (-1,1) to (1,-1);
\draw[white,line width=2mm] (-1,-1) to (1,1);
\draw[thick] (-1,-1) to (1,1);
\node[blue,rotate=45,below] at (-.7,-.6) {\scriptsize{m}};
\node[blue,rotate=-45,above] at (-.7,.6) {\scriptsize{n}};
\draw[dashed] (0,0) circle [radius=1.414];
\end{tikzpicture}%
-
\begin{tikzpicture}[baseline=-\dimexpr\fontdimen22\textfont2\relax,scale=.6]
\draw[thick] (-1,-1) to (1,1);
\draw[white,line width=2mm] (-1,1) to (1,-1);
\draw[thick] (-1,1) to (1,-1);
\node[blue,rotate=45,below] at (-.7,-.6) {\scriptsize{m}};
\node[blue,rotate=-45,above] at (-.7,.6) {\scriptsize{n}};
\draw[dashed] (0,0) circle [radius=1.414];
\end{tikzpicture}%
=z\left(\delta_{m,n}\;
\begin{tikzpicture}[baseline=-\dimexpr\fontdimen22\textfont2\relax,scale=.6]
\draw[thick] (-1,-1) to (-.62,-.62) to [out=45,in=-45] (-.62,.62) to (-1,1);
\draw[thick] (1,-1) to (.62,-.62) to [out=135,in=-135] (.62,.62) to (1,1);
\node[blue,rotate=45,below] at (-.7,-.6) {\scriptsize{n}};
\node[blue,rotate=45,above] at (.7,.6) {\scriptsize{n}};
\draw[dashed] (0,0) circle [radius=1.414];
\end{tikzpicture}%
-\delta_{m,n+1}\;
\begin{tikzpicture}[baseline=-\dimexpr\fontdimen22\textfont2\relax,scale=.6]
\draw[thick] (-1,-1) to (-.62,-.62) to [out=45,in=135] (.62,-.62) to (1,-1);
\draw[thick] (-1,1) to (-.62,.62) to [out=-45,in=-135] (.62,.62) to (1,1);
\draw (0,.25) to (0,.5);
\draw (0,-.25) to (0,-.5);
\node[blue,rotate=-45,below] at (.7,-.6) {\scriptsize{n}};
\node[blue,rotate=-45,above] at (-.7,.6) {\scriptsize{n}};
\draw[dashed] (0,0) circle [radius=1.414];
\end{tikzpicture}%
\right)
\tag{S1'}
\end{equation}
where $z=q^{\frac{1}{2}}-q^{-\frac{1}{2}}$, and \eqref{gs_2} becomes
\begin{equation}\label{gs_2alt}
\begin{tikzpicture}[baseline=-\dimexpr\fontdimen22\textfont2\relax,scale=.6]
\draw[thick] (.1,.1) to [out=45,in=90] (1,0) to [out=-90,in=-45] (0,0) to [out=135,in=90] (-1,0) to [out=-90,in=-135] (-.1,-.1);
\draw[dashed] (0,0) circle [radius=1.414];
\end{tikzpicture}
=z^{-1}\;
\begin{tikzpicture}[baseline=-\dimexpr\fontdimen22\textfont2\relax,scale=.6]
\draw[dashed] (0,0) circle [radius=1.414];
\end{tikzpicture}
\tag{S2'}
\end{equation}
which are the skein relations for $z^{-1}R(L)$ where $R(L)$ is the graded ruling polynomial as defined e.g. in \cite{rutherford06}.

\subsubsection{Front projection}

So far we have been depicting Legendrian links using the \textbf{Lagrangian projection}, i.e. the projection to the $(x,y)$-plane, from which the Legendrian curve can be recovered by integration.
Generally, the \textbf{front projection}, which is the projection to the $(x,z)$-plane, turns out to be more useful since the Legendrian curve can be recovered simply by looking at the slope: $y=dz/dx$, at least where $dx\neq 0$ on the curve.
For a generic Legendrian curve the front projection has three types of singularities, left cusps ($\prec$), right cusps ($\succ$), and transverse crossings ($\times$), away from which the projection is the graph of a smooth function $z=f(x)$.
In particular, unlike for non-Legendrian knots, it is not necessary to indicate which strand passes over the other, though we will often do so for convenience.

Here is what the skein relations look like under front projection.
\begin{equation}\label{gs_1f}
\begin{tikzpicture}[baseline=-\dimexpr\fontdimen22\textfont2\relax,scale=.6]
\draw[thick] (-1,-1) to [out=60,in=180] (0,-.25) to [out=0,in=120] (1,-1);
\draw[thick] (-1,1) to [out=-60,in=180] (0,.25) to [out=0,in=-120] (1,1);
\node[blue,below] at (0,-.2) {\scriptsize{n+1}};
\node[blue,above] at (0,.2) {\scriptsize{m}};
\draw[dashed] (0,0) circle [radius=1.414];
\end{tikzpicture}%
-q^{(-1)^{m-n}}\;
\begin{tikzpicture}[baseline=-\dimexpr\fontdimen22\textfont2\relax,scale=.6]
\draw[thick] (-1,-1) to [out=60,in=-120] (-.6,-.15);
\draw[thick] (-.35,.15) to [out=45,in=180] (0,.25) to [out=0,in=120] (1,-1);
\draw[thick] (-1,1) to [out=-60,in=180] (0,-.25) to [out=0,in=-135] (.35,-.15);
\draw[thick] (.6,.15) to [out=60,in=-120] (1,1);
\node[blue,below] at (0,-.2) {\scriptsize{m}};
\node[blue,above] at (0,.2) {\scriptsize{n+1}};
\draw[dashed] (0,0) circle [radius=1.414];
\end{tikzpicture}%
=\delta_{m,n}(q-1)\;
\begin{tikzpicture}[baseline=-\dimexpr\fontdimen22\textfont2\relax,scale=.6]
\draw[thick] (-1,-1) to [out=60,in=180] (-.3,0) to [out=180,in=-60] (-1,1);
\draw[thick] (1,-1) to [out=-120,in=0] (.3,0) to [out=0,in=-120] (1,1);
\node[blue,rotate=-25,above] at (.5,.6) {\scriptsize{m}};
\node[blue,rotate=25,above] at (-.5,.6) {\scriptsize{m}};
\draw[dashed] (0,0) circle [radius=1.414];
\end{tikzpicture}%
-\delta_{m,n+1}(1-q^{-1})\;
\begin{tikzpicture}[baseline=-\dimexpr\fontdimen22\textfont2\relax,scale=.6]
\draw[thick] (-1,-1) to [out=60,in=-150] (-.2,-.1);
\draw[thick] (.2,.1) to [out=30,in=-120] (1,1);
\draw[thick] (-1,1) to [out=-60,in=150] (0,0) to [out=-30,in=120] (1,-1);
\node[blue,rotate=-25,below] at (-.5,-.6) {\scriptsize{m}};
\node[blue,rotate=25,above] at (-.5,.6) {\scriptsize{m}};
\draw[dashed] (0,0) circle [radius=1.414];
\end{tikzpicture}%
\tag{S1}
\end{equation}
An alternative way of presenting the same relation is as follows.
\begin{equation}\label{gs_1fa}
\begin{tikzpicture}[baseline=-\dimexpr\fontdimen22\textfont2\relax,scale=.6]
\draw[thick] (-1.414,0) to [out=0,in=180] (0,-.5) to [out=0,in=180] (1.414,0);
\draw[line width=1mm,white] (1,1) to [out=-135,in=0] (0,0) to [out=0,in=135] (1,-1);
\draw[thick] (1,1) to [out=-135,in=0] (0,0) to [out=0,in=135] (1,-1);
\node[blue,above] at (-.8,-.2) {\scriptsize{n}};
\node[blue,left] at (.95,.85) {\scriptsize{m-1}};
\node[blue,left] at (.95,-.85) {\scriptsize{m}};
\draw[dashed] (0,0) circle [radius=1.414];
\end{tikzpicture}%
-q^{(-1)^{m-n}}\;
\begin{tikzpicture}[baseline=-\dimexpr\fontdimen22\textfont2\relax,scale=.6]
\draw[thick] (1,1) to [out=-135,in=0] (0,0) to [out=0,in=135] (1,-1);
\draw[line width=1mm,white] (-1.414,0) to [out=0,in=180] (0,.5) to [out=0,in=180] (1.414,0);
\draw[thick] (-1.414,0) to [out=0,in=180] (0,.5) to [out=0,in=180] (1.414,0);
\node[blue,below] at (-.8,.2) {\scriptsize{n}};
\node[blue,left] at (.95,.85) {\scriptsize{m-1}};
\node[blue,left] at (.95,-.85) {\scriptsize{m}};
\draw[dashed] (0,0) circle [radius=1.414];
\end{tikzpicture}%
=\delta_{m,n}(q-1)\;
\begin{tikzpicture}[baseline=-\dimexpr\fontdimen22\textfont2\relax,scale=.6]
\draw[thick] (-1.414,0) to [out=0,in=135] (1,-1);
\draw[thick] (1,1) to [out=-135,in=0] (0,.3) to [out=0,in=180] (1.414,0);
\node[blue,below] at (-.5,0) {\scriptsize{n}};
\node[blue,left] at (.95,.85) {\scriptsize{m-1}};
\node[blue,left] at (.95,-.05) {\scriptsize{m}};
\draw[dashed] (0,0) circle [radius=1.414];
\end{tikzpicture}%
-\delta_{m,n+1}(1-q^{-1})\;
\begin{tikzpicture}[baseline=-\dimexpr\fontdimen22\textfont2\relax,scale=.6]
\draw[thick] (-1.414,0) to [out=0,in=-135] (1,1);
\draw[thick] (1,-1) to [out=135,in=0] (0,-.3) to [out=0,in=180] (1.414,0);
\node[blue,below] at (-.7,.2) {\scriptsize{n}};
\node[blue,above] at (.8,-.2) {\scriptsize{m-1}};
\node[blue,left] at (.95,-.85) {\scriptsize{m}};
\draw[dashed] (0,0) circle [radius=1.414];
\end{tikzpicture}%
\tag{S1}
\end{equation}
\begin{equation}\label{gs_2f}
\begin{tikzpicture}[baseline=-\dimexpr\fontdimen22\textfont2\relax,scale=.6]
\draw[thick] (-1,0) to [out=0,in=180] (0,.6) to [out=0,in=180] (1,0) to [out=180,in=0] (0,-.6) to [out=180,in=0] (-1,0);
\draw[dashed] (0,0) circle [radius=1.414];
\end{tikzpicture}
=(q-1)^{-1}\;
\begin{tikzpicture}[baseline=-\dimexpr\fontdimen22\textfont2\relax,scale=.6]
\draw[dashed] (0,0) circle [radius=1.414];
\end{tikzpicture}
\tag{S2}
\end{equation}
\begin{equation}\label{gs_3f}
\begin{tikzpicture}[baseline=-\dimexpr\fontdimen22\textfont2\relax,scale=.6]
\draw[thick] (-1.414,0) to [out=0,in=-135] (.5,.5) to [out=-135,in=45] (-.5,-.5) to [out=45,in=180] (1.414,0);
\draw[dashed] (0,0) circle [radius=1.414];
\end{tikzpicture}
=0
\tag{S3}
\end{equation}
Finally, we draw the boundary skein relations under front projection, where the dotted line is a component of $N\times \RR$.
\begin{equation}\label{gs_1fb}
\begin{tikzpicture}[baseline=-\dimexpr\fontdimen22\textfont2\relax,scale=.6]
\draw[thick] (0,-.25) to [out=0,in=120] (1,-1);
\draw[thick] (0,.25) to [out=0,in=-120] (1,1);
\node[blue,left] at (.1,.25) {\scriptsize{m}};
\node[blue,left] at (.1,-.25) {\scriptsize{n+1}};
\draw[densely dotted] (0,-1.414) to (0,1.414);
\draw[dashed] (0,0) circle [radius=1.414];
\end{tikzpicture}%
-q^{s(m-n)}\;
\begin{tikzpicture}[baseline=-\dimexpr\fontdimen22\textfont2\relax,scale=.6]
\draw[thick] (0,.25) to [out=0,in=120] (1,-1);
\draw[thick] (0,-.25) to [out=0,in=-135] (.35,-.15);
\draw[thick] (.6,.15) to [out=60,in=-120] (1,1);
\node[blue,left] at (.1,-.25) {\scriptsize{m}};
\node[blue,left] at (.1,.25) {\scriptsize{n+1}};
\draw[densely dotted] (0,-1.414) to (0,1.414);
\draw[dashed] (0,0) circle [radius=1.414];
\end{tikzpicture}%
=\delta_{m,n}(q-1)\;
\begin{tikzpicture}[baseline=-\dimexpr\fontdimen22\textfont2\relax,scale=.6]
\draw[thick] (1,-1) to [out=-120,in=0] (.3,0) to [out=0,in=-120] (1,1);
\node[blue,rotate=-25,above] at (.5,.6) {\scriptsize{m}};
\draw[densely dotted] (0,-1.414) to (0,1.414);
\draw[dashed] (0,0) circle [radius=1.414];
\end {tikzpicture}%
\tag{S1b}
\end{equation}
\begin{equation}\label{gs_2fb}
\begin{tikzpicture}[baseline=-\dimexpr\fontdimen22\textfont2\relax,scale=.6]
\draw[thick] (0,.6) to [out=0,in=180] (1,0) to [out=180,in=0] (0,-.6);
\draw[densely dotted] (0,-1.414) to (0,1.414);
\draw[dashed] (0,0) circle [radius=1.414];
\end{tikzpicture}
=(q-1)^{-1}\;
\begin{tikzpicture}[baseline=-\dimexpr\fontdimen22\textfont2\relax,scale=.6]
\draw[densely dotted] (0,-1.414) to (0,1.414);
\draw[dashed] (0,0) circle [radius=1.414];
\end{tikzpicture}
\tag{S2b}
\end{equation}
Here, $s(m-n):=(-1)^{m-n}$ if $m>n$ and $s(m-n):=0$ if $m\leq n$ as in the introduction.

\subsection{From Skein to Hall}
\label{subsec_skeinhall}

Fix data $S$, $N$, $\theta$, $\eta$, and $\KK:=\FF_q$ as usual.
The goal of this subsection is to define a homomorphism of $\QQ$-algebras
\[
\Phi:\mathrm{Skein}(S,N,\theta,\eta)\otimes_{\ZZ[t^{\pm},(1-t)^{-1}]}\QQ\longrightarrow\mathrm{Hall}(\mc F(S,N,\theta,\eta,\KK))
\]
where $\ZZ[t^{\pm},(1-t)^{-1}]\to \QQ$ by mapping $t\mapsto q$.

We begin by defining the image of a graded Legendrian link $L\subset S\times \RR$ in the Hall algebra.
Recall that to $L$ we attach the $A_\infty$-category $\mc C(L)_1$ of rank one local systems on $L$ together with Maurer--Cartan element.
This category has finitely many objects and a weighted counting measure 
\[
\mu_{\mc C(L)_1}(X):=\sum_{X\in\mathrm{Iso}(\mc C(L)_1)}|\mathrm{Aut}(X)|^{-1}\prod_{k=1}^{\infty}|\mathrm{Ext}^{-k}(X,X)|^{(-1)^{k+1}}
\]
for which we give a more explicit formula below.
We define $\Phi(L)$ by pushing $\mu_{\mc C(L)_1}$ forward along the functor
\[
F_L:\mc C(L)_1\longrightarrow \mc F^{\vee}(S,N,\theta,\eta,\KK)
\]
to the Fukaya category of $S$, i.e.
\[
\Phi(L):=\left(F_L\right)_*\left(\mu_{\mc C(L)_1}\right).
\]

Before stating the following lemma, we assign an integer $e(L)$ to a generic graded Lagrangian link $L=(I,\gamma,\tilde{\gamma})$.
For each self-crossing $x\in\mathrm{Cr}(L)$ of $p_1(L)$ we have an intersection index $i_x:=i(L,t_0,L,t_1)\in\ZZ$ where $p_1(\gamma(t_0))=p_1(\gamma(t_1))=x$ and $p_2(\gamma(t_0))>p_2(\gamma(t_1))$.
Define $e(L)$ as the number of $x\in\mathrm{Cr}(L)$ with $i_x(L)\leq 0$ and even, minus the number of $x\in\mathrm{Cr}(L)$ with $i_x(L)\leq 0$ and odd.

\begin{lemma}
Let $L$ be a graded Legendrian link, then the formula
\begin{equation}
\label{Phi_formula}
\mu_{\mc C(L)_1}=(q-1)^{-|\pi_0(L)|}q^{-e(L)}\sum_{E}\sum_{\delta\in\mc{MC}(L,E)}[(L,E,\delta)]
\end{equation}
holds, where $E$ ranges over all isomorphism classes of rank one local systems on $L$.
\end{lemma}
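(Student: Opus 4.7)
The approach is to derive the formula as the pull--push of the weighted counting measure along the canonical functor $F\colon\widetilde{\mc C}_{\geq 0}\to\mc C_0$, applied to the curved filtered $A_\infty$-category $\mc C$ whose objects are rank-one $\KK$-local systems on $L$. By the construction of the augmentation category in Subsection~\ref{subsec_mc}, $\mc C(L)_1 = \widetilde{\mc C}_{\geq 0}$, and the filtration cleanly separates the strictly positive part, spanned by honest self-intersections of $L$, from the zeroth-order quotient $\mc C_0$, which records only the new intersection points produced by a minimal perturbation.

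The starting identity, which is an immediate unwinding of the definition of $F^!$ in Subsection~\ref{subsec_hc}, is
\[
\mu_{\mc C(L)_1}=\sum_E\mu_{\mc C_0}(L,E)\,F^!([L,E]),
\]
where the sum runs over isomorphism classes of rank-one local systems on $L$. Proposition~\ref{upper_shriek_formula} then rewrites this as
\[
\mu_{\mc C(L)_1}=\sum_E\mu_{\mc C_0}(L,E)\left(\prod_{i\geq 0}\left|\Hom^{-i}((L,E),(L,E))_{>0}\right|^{(-1)^{i+1}}\right)\sum_{\delta\in\mc{MC}(L,E)}[(L,E,\delta)],
\]
reducing the lemma to the evaluation of the two $E$-dependent prefactors.

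For $\mu_{\mc C_0}(L,E)$, the morphisms in $\mc C_0$ are spanned by the minimal new perturbation intersection points -- two per $S^1$-component placed in degrees $0$ and $1$, and one per interval component in degree $0$. On the diagonal the induced differential vanishes, so $\Ext^0_{\mc C_0}\cong\KK^{|\pi_0(L)|}$ and $\Ext^{-k}_{\mc C_0}=0$ for $k\geq 1$; hence $\mathrm{Aut}(L,E)\cong(\KK^{\times})^{|\pi_0(L)|}$ and $\mu_{\mc C_0}(L,E)=(q-1)^{-|\pi_0(L)|}$, independently of $E$. For the alternating product, each self-crossing $x\in\mathrm{Cr}(L)$ with $z_0>z_1$ contributes a one-dimensional generator to $\Hom^{i_x}((L,E),(L,E))_{>0}$; writing $m_j$ for the number of such crossings with $i_x=j$, the product becomes $q^{\sum_{i\geq 0}(-1)^{i+1}m_{-i}}$, which matches $q^{-e(L)}$ by unpacking the parity conventions in the definition of $e(L)$. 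Multiplying the two prefactors yields the claimed formula. The main technical point will be the parity bookkeeping in this last step and a direct check that the generators entering the product are in bijection with crossings with the correct height-ordering convention; all the analytic content sits in Proposition~\ref{upper_shriek_formula}, whose proof in turn relies on Propositions~\ref{prop_isoreflect} and~\ref{prop_isolift}.
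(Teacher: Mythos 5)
Your proposal is correct and follows essentially the same route as the paper: both factor the weighted counting measure through the zeroth-order quotient category (your identity $\mu_{\mc C(L)_1}=\sum_E\mu_{\mc C_0}(L,E)\,F^!([L,E])$ is exactly the paper's $\mu_{\mc C(L)_1}=G^!(\mu_{\mc C(L)_{1,0}})$ written out), apply Proposition~\ref{upper_shriek_formula}, compute $\mu_{\mc C_0}(L,E)=(q-1)^{-|\pi_0(L)|}$, and identify the alternating product over $\Hom^{-i}_{>0}$ with $q^{-e(L)}$. Your version supplies slightly more detail on the two prefactor computations, which the paper only asserts.
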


\begin{proof}
The weighted counting measure $\mu_{\mc C}$ on an $A_\infty$-category $\mc C$ can be written as $\mu_{\mc C}=A^!(1)$ where $A:\mc C\to 0$ is the functor to the final category with only the zero object.
If we factor the functor $\mc C_1(L)\to 0$ through $G:\mc C(L)_1\to \mc C(L)_{1,0}$, where $\mc C(L)_{1,0}$ has morphisms $\Hom_{\geq 0}/\Hom_{>0}$ as in Subsection~\ref{subsec_curved}, we get
\[
\mu_{\mc C(L)_1}=G^!\left(\mu_{\mc C(L)_{1,0}}\right).
\]
Isomorphism classes of objects in $\mc C(L)_{1,0}$ correspond to isomorphism classes of rank 1 local systems on $L$, so the weighted counting measure has the simple form 
\[
\mu_{\mc C(L)_{1,0}}=(q-1)^{-|\pi_0(L)|}\sum_{E}[(L,E)].
\]
Combining this with the formula for $G^!$ from Proposition~\ref{upper_shriek_formula} and noting that
\[
e(L)=\sum_{i=0}^\infty (-1)^i\dim\Hom^{-i}((L,E),(L,E))_{>0}
\]
essentially by definition, we obtain \eqref{Phi_formula}.
\end{proof}

Our main result is the following.

\begin{theorem}\label{thm_homomorphism}
The assignment $L\mapsto\Phi(L)$ induces a well-defined homomorphism 
\[
\Phi:\mathrm{Skein}(S,N,\theta,\eta)\otimes_{\ZZ[t^{\pm},(1-t)^{-1}]}\QQ\longrightarrow\mathrm{Hall}(\mc F(S,N,\theta,\eta,\KK))
\]
of $\QQ$-algebras.
\end{theorem}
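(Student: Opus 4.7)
The proof has three parts: (i) $\Phi(L)$ is well-defined on Legendrian isotopy classes; (ii) $\Phi$ is multiplicative, $\Phi(L_1 L_2) = \Phi(L_1)\,\Phi(L_2)$; and (iii) $\Phi$ respects each skein relation. Parts (i) and (ii) are essentially formal. A Legendrian isotopy of $L$ induces a filtered $A_\infty$-equivalence of the curved endomorphism algebra $\Hom((L,E),(L,E))_{>0}$, hence an equivalence of the augmentation category $\mc C(L)_1$ compatible with the functor $F_L$, and weighted counting measures are preserved under such equivalences (Proposition~\ref{prop_isoreflect} and Proposition~\ref{prop_isolift}). For multiplicativity, when $L_2$ is stacked strictly above $L_1$ the heights force $\Hom_{>0}((L_1,E_1),(L_2,E_2))=0$, so an MC element on $L_1\sqcup L_2$ splits canonically into MC data on each summand together with a closed extension class $f\in\Ext^1((L_2,E_2),(L_1,E_1))$. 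This description matches exactly the pull--push along $\mc C^{\mc A_2}\to\mc C\times\mc C$ defining the Hall product in Subsection~\ref{subsec_hall}, and comparing normalizations $(q-1)^{-|\pi_0|}q^{-e(L)}$ on both sides yields $\Phi(L_1L_2)=\Phi(L_1)\Phi(L_2)$.

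For (iii), the ``trivial'' relations \eqref{gs_2}, \eqref{gs_2b}, and \eqref{gs_3} reduce to a local computation: the contractible unknot of \eqref{gs_2} contributes the factor $(q-1)^{-1}$ coming directly from the normalization $(q-1)^{-|\pi_0(L)|}$ in \eqref{Phi_formula}, while the teardrop of \eqref{gs_3} bounds an immersed $1$-gon giving a nonzero curvature $m_0$ which has no cancellation in $\mc{MC}$, so $\Phi=0$. The boundary relation \eqref{gs_2b} is the exact analog inside the half-ball.

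The heart of the proof is the crossing relation \eqref{gs_1}. Let $L_+,L_-,L_{||},L_s$ denote the four local configurations, identical outside a small ball $B$. The strategy is to partition
\[
\mc{MC}(L_-,E)=\bigsqcup_{g\in\KK}\{\delta\in\mc{MC}(L_-,E)\mid \delta_p=g\}
\]
according to the value at the interior crossing $p$, and apply Proposition~\ref{prop_resolve} to each $g\in\KK^\times$ to obtain a bijection with $\mc{MC}(L_s,E_g)$ that preserves isomorphism class in $\mc F^\vee$. The resulting identification pairs the $g\neq 0$ part of $\Phi(L_-)$ with a weighted sum of $\Phi$-contributions from smoothed links, while the $g=0$ part is identified, by running the parallel argument with the roles of $L_-$ and $L_+$ reversed, with $\Phi(L_+)$ up to the scalar $q^{(-1)^{m-n}}$ produced by the jump $e(L_+)-e(L_-)$ in the self-intersection count. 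The Kronecker deltas $\delta_{m,n}$ and $\delta_{m,n+1}$ select exactly which of the two smoothings $L_{||}$ or $L_s$ admits a compatible graded structure at $p$; the other term drops out because the would-be smoothed curve fails to be graded. Relation \eqref{gs_1b} follows in the same way, using Proposition~\ref{prop_resolve_b} in place of Proposition~\ref{prop_resolve}.

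The principal obstacle lies entirely in the weight bookkeeping for \eqref{gs_1}: one must verify that the changes in $|\pi_0|$ and $e(\cdot)$ across the four links $L_+,L_-,L_{||},L_s$, combined with the sum over rank one local systems $E$ and the way they propagate to the twisted local systems $E_g$ on $L_s$, produce precisely the coefficients $q^{(-1)^{m-n}}$, $q-1$, and $1-q^{-1}$ prescribed by the relation. This is where the particular normalization of $\Phi$ in \eqref{Phi_formula} is forced, and it is the step that genuinely uses the full strength of Proposition~\ref{prop_resolve} rather than just its zeroth-order version.
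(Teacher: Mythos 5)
Your proposal follows essentially the same route as the paper: the crossing relation \eqref{gs_1} is handled by partitioning the Maurer--Cartan set according to the value $\delta_p$ at the crossing and invoking Proposition~\ref{prop_resolve}, the relations \eqref{gs_2} and \eqref{gs_3} reduce to the curvature term $m_0$ of the local configuration, and multiplicativity follows from the block decomposition of $\Hom_{\geq 0}$ on a stacked union matching the Hall product. The bookkeeping you defer --- the $(q-1)^{\epsilon}$-to-one counting of rank one local systems under pullback along $L\to L_{\pm},L_s$, the identity $\epsilon_s-\epsilon_+=|\pi_0(L_+)|-|\pi_0(L_s)|$, and the jumps in $e(\cdot)$ --- is exactly what the paper carries out, and it closes as you predict.
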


The proof will be completed in the remainder of this subsection.

\subsubsection{Skein relations in the Hall algebra}

\fbox{\eqref{gs_1}.} 
Possibly performing a rotation by a right angle, we may assume that $m\leq n$.
Let us first look at the case where $m=n$.
In the notation of Subsection~\ref{subsec_smoothing}, we have an immersed graded Legendrian curve $L$ with single self-intersection over $p\in S$ and its three resolutions $L_+$, $L_-$, and $L_s$, which are, from left to right, the three links which appear in the relation \eqref{gs_1}.
Given a rank one local system $E$ on $L_+$ let $E_0$ (resp. $E_1$) be the fiber of $E$ over the upper branch (resp. lower branch) crossing over $p$.
Also, for a Maurer--Cartan element $\delta\in\mc{MC}(L_+,E)$ let $\delta_p\in\Hom(E_0,E_1)$ be the component of $\delta$ belonging to $p$.
The main idea is to distinguish the case $\delta_p=0$ and $\delta_p\neq 0$.
We have a corresponding sum
\begin{align*}
\Phi(L_+)=&(q-1)^{-|\pi_0(L_+)|}q^{-e(L_+)}\sum_{E}\sum_{\substack{\delta\in\mc{MC}(L_+,E) \\ \delta_p=0}}[(L_+,E,\delta)] \\
&+(q-1)^{-|\pi_0(L_+)|}q^{-e(L_+)}\sum_{E}\sum_{\substack{\delta\in\mc{MC}(L_+,E) \\ \delta_p\neq 0}}[(L_+,E,\delta)].
\end{align*}
The first summand is $q\Phi(L_-)$, since the set of $\delta\in\mc{MC}(L_+,E)$ with $\delta_p=0$ is naturally identified with $\mc{MC}(L_-,E)$ and $e(L_-)=e(L_+)+1$ due to the extra basis element of $\Hom((L_-,E),(L_-,E))_{>0}$ in degree zero.
We need to show that the second summand is $(q-1)\Phi(L_s)$.

Rank one local systems over $\KK$ are classified by first cohomology with coefficients in $\KK^{\times}$.
There are surjective pullback maps $g_i:H^1(L;\KK^{\times})\to H^1(L_i;\KK^{\times})$ where $i\in\{+,-,s\}$.
The map $g_i$ is $(q-1)^{\epsilon_+}$-to-1, where $\epsilon_i=1$ if the two branches of $L_i$ near the intersection point belong to the same component, and $\epsilon_i=0$ if they belong to distinct components.
Given $E\in H^1(L_+,K^{\times})$ and $\delta\in\mc{MC}(L_+,E)$ with $\delta_p\neq 0$ we can use $\delta_p$ to identify the fibers of $E$ over $p$ to get a local system over $L$, which in turn pulls back to a local system over $L_s$ and a corresponding $\delta\in\mc{MC}(L_s,E)$ by Proposition~\ref{prop_resolve}.
Taking into account the different sizes of the $H^1(L_i;\KK^{\times})$ we get
\begin{align*}
\sum_{E\in H^1(L_+;\KK^{\times})}\sum_{\substack{\delta\in\mc{MC}(L_+,E) \\ \delta_p\neq 0}}[(L_+,E,\delta)]&=(q-1)^{1-\epsilon_+}\sum_{E\in H^1(L;\KK^{\times})}\sum_{\substack{\delta\in\mc{MC}(L_+,E) \\ \delta_p=1}}[(L_+,E,\delta)] \\
&=(q-1)^{1+\epsilon_s-\epsilon_+}\sum_{E\in H^1(L_s;\KK^{\times})}\sum_{\delta\in\mc{MC}(L_s,E)}[(L_s,E,\delta)]
\end{align*}
which proves the claim since $e(L_+)=e(L_s)$ and
\[
\epsilon_s-\epsilon_+=\mathrm{rk} H_1(L_+)-\mathrm{rk} H_1(L_s)=|\pi_0(L_+)|-|\pi_0(L_s)|
\]
as $L_\pm$ and $L_s$ have the same Euler characteristic.

The case $m<n$ is much simpler, since $\mc{MC}(E,L_-)=\mc{MC}(E,L_+)$.
We just need to note that $\Hom((L_-,E),(L_-,E))_{>0}$ has an extra basis element in degree $m-n<0$ compared to $\Hom((L_+,E),(L_+,E))_{>0}$ and thus $e(L_-)=e(L_+)+(-1)^{m-n}$, so $\Phi(L_+)=q^{(-1)^{m-n}}\Phi(L_-)$.

\fbox{\eqref{gs_2}.} 
Let $L$ be a graded Legendrian link with component $L_\infty$ which is an unknot with projection to $S$ as displayed on the left hand side of \eqref{gs_2}.
Suppose we put a local system $E$ with monodromy $\lambda\in\KK^{\times}$ on $L_\infty$ then $\Hom((L_\infty,E),(L_\infty,E))_{>0}$ is of rank 1, generated by a degree 2 element $c$ which is the linear map from the fiber of $E$ over the top branch at the self-intersection to the fiber of $E$ over the bottom branch given by parallel transport along the left 1-gon of $L_\infty$.
With this convention we have
\[
m_0=(1+\lambda)c
\]
and no other terms in the Maurer--Cartan equation for $\Hom((L_\infty,E),(L_\infty,E))_{>0}$, so necessarily $\lambda=-1$ for $\mc{MC}(L)\neq\emptyset$.
It follows that if we denote $L=L_\infty\sqcup L'$ then the map which takes a pair $(E,\delta)$ with $E\in H^1(L;\KK^{\times})$ and $\delta\in\mc{MC}(L,E)$ and restrict it to $L'$ is a bijection, which moreover preserves the isomorphism class in the Fukaya category, since $L_\infty$ is a zero object.
Also $e(L)=e(L')$ and $|\pi_0(L)|=|\pi_0(L')|+1$ so $\Phi(L)=(q-1)^{-1}\Phi(L')$.

\fbox{\eqref{gs_3}.} 
We have a graded Legendrian link $L$ bounding a 1-gon, whose area $\rho$ we assume so be smaller than the areas of any other polygons cut out by the projection of $L$ to $S$.
The lowest order term appearing in the Maurer--Cartan equation for $L$ is $m_0$ coming from this 1-gon, so it cannot cancel with terms coming from other 1-gons or $\delta$, hence $\mc{MC}(L,E)=\emptyset$ for any $E$ and $\Phi(L)=0$.

The proofs of \eqref{gs_1b} and \eqref{gs_2b} are similar to \eqref{gs_1} and \eqref{gs_2}, respectively.

\subsubsection{Compatibility with product}

Suppose $L_1$ and $L_2$ are graded Legendrian links with $L_1$ entirely below $L_2$, i.e. such that the maximum value of $p_2:S\times\RR \to \RR$ on $L_1$ is less than the minimum value of $p_2$ on $L_2$.
We need to show that $\Phi(L_1\cup L_2)=\Phi(L_1)\Phi(L_2)$.
Let $E_i$ be a rank 1 local system on $L_i$, $i=1,2$ and let $E$ be their union on $L:=L_1\cup L_2$.
The main point is that
\begin{align*}
\Hom((L,E),(L,E))_{\geq 0}= & \Hom((L_1,E_1),(L_1,E_1))_{\geq 0}\oplus\Hom((L_2,E_2),(L_1,E_1)) \\
& \oplus \Hom((L_2,E_2),(L_2,E_2))_{\geq 0}
\end{align*}
and $\delta=(\delta_{11},\delta_{21},\delta_{22})\in\Hom^1((L,E),(L,E))_{>0}$ satisfies the Maurer--Cartan equation if and only if $\delta_{11}\in\mc{MC}(L_1,E_1)$, $\delta_{22}\in\mc{MC}(L_2,E_2)$, and $\delta_{21}$ is a closed degree 1 morphism from $(L_2,E_2,\delta_{22})$ to $(L_1,E_1,\delta_{11})$.
Thus
\[
\left(\sum_{\delta_{11}\in\mc{MC}(L_1,E_1)}[(L_1,E_1,\delta_{11})]\right)\left(\sum_{\delta_{22}\in\mc{MC}(L_2,E_2)}[(L_2,E_2,\delta_{22})]\right) = q^{-e(L_2,L_1)}\sum_{\delta\in\mc{MC}(L,E)}[(L,E,\delta)]
\]
where 
\[
e(L_2,L_1)=\sum_{i=0}^\infty (-1)^i\dim\Hom^{-i}((L_2,E_2),(L_1,E_1)).
\]
Since $e(L)=e(L_1)+e(L_2)+e(L_2,L_1)$ and $\pi_0(L)=\pi_0(L_1)\sqcup \pi_0(L_2)$, the claim follows using \eqref{Phi_formula}.

\subsection{Tangles}
\label{subsec_tangles}

Formally, a \textbf{graded Legendrian tangle} is a graded Legendrian link $L$ in $J^1[0,1]\cong [0,1]\times\RR^2$ with boundary in $\{0,1\}\times\RR\times\{0\}$.
We will refer to them simply as \textit{tangles}, and generally use the front projection to depict them.
The front projection is also implicit in the language used below: The positive $x$-axis points to the right, the positive $z$-axis points up.
Denote the left (resp. right) boundary of a tangle $L$ by $\partial_0L\subset\RR$ (resp. $\partial_1L\subset\RR$).
These are subsets of $\RR$ with grading, i.e. a labeling of the points by integers.
Besides vertical composition --- stacking --- of tangles, there is a horizontal composition, well-defined up to isotopy, for tangles $L_0$, $L_1$ with matching ends, i.e $\partial_1L_0$ isotopic to $\partial_0L_1$. 

\begin{remark}
One way of summarizing the situation is to say that there is a braided monoidal category whose objects are finite graded subsets of $\RR$ up to isotopy and morphisms from $X$ to $Y$ are graded Legendrian tangles $L$ up to isotopy with $\partial_0L=X$ and $\partial_1L=Y$. 
Composition in this category is horizontal composition of tangles, while the monoidal product is given by vertical composition.
This monoidal category has an equivalent description in terms of an object of $D^b(\mathbf k)$ with a pair of complete flags in the derived sense.
For more details see~\cite{haiden_flagstangles}.
\end{remark}

All tangles are obtained from the following \textbf{elementary tangles} under vertical and horizontal composition. 
\begin{equation*}
\begin{tikzpicture}[baseline=-\dimexpr\fontdimen22\textfont2\relax,scale=.6]
\draw[thick] (-1,0) to (1,0);
\node[above,blue] at (0,-.1) {\scriptsize{n}};
\node at (0,-1.5) {$1_n$};
\end{tikzpicture}
\qquad\qquad
\begin{tikzpicture}[baseline=-\dimexpr\fontdimen22\textfont2\relax,scale=.6]
\draw[thick] (1,.5) to [out=180,in=0] (-.5,0) to [out=0,in=180] (1,-.5);
\node[right,blue] at (.9,.5) {\scriptsize{n}};
\node[right,blue] at (.9,-.5) {\scriptsize{n+1}};
\node at (.25,-1.5) {$\lambda_n$};
\end{tikzpicture}
\qquad\qquad
\begin{tikzpicture}[baseline=-\dimexpr\fontdimen22\textfont2\relax,scale=.6]
\draw[thick] (-1,.5) to [out=0,in=180] (.5,0) to [out=180,in=0] (-1,-.5);
\node[left,blue] at (-.9,.5) {\scriptsize{n}};
\node[left,blue] at (-.9,-.5) {\scriptsize{n+1}};
\node at (-.25,-1.5) {$\rho_n$};
\end{tikzpicture}
\qquad\qquad
\begin{tikzpicture}[baseline=-\dimexpr\fontdimen22\textfont2\relax,scale=.6]
\draw[thick] (-1,-.5) to [out=0,in=180] (1,.5);
\draw[line width=1mm,white] (-1,.5) to [out=0,in=180] (1,-.5);
\draw[thick] (-1,.5) to [out=0,in=180] (1,-.5);
\node[right,blue] at (.9,.5) {\scriptsize{m}};
\node[right,blue] at (.9,-.5) {\scriptsize{n}};
\node at (0,-1.5) {$\sigma_{m,n}$};
\end{tikzpicture}
\end{equation*}
A \textbf{basic tangle} is a vertical composition of elementary tangles of the form
\begin{equation}
1_{m_1}\otimes \cdots \otimes 1_{m_i}\otimes \epsilon\otimes 1_{n_1}\otimes\cdots\otimes 1_{n_j}
\end{equation}
where $\epsilon$ is a non-identity elementary tangle and $A\otimes B$ denotes the tangle obtained by stacking $B$ on top of $A$.
Any generic tangle is, up to planar isotopy, a horizontal composition of basic tangles.
Isotopies of tangles are concatenations of basic moves: sliding an elementary tangle past another one and the three Reidemeister moves, which correspond to certain algebraic identities for composition of tangles and allow a completely combinatorial approach.

When constructing the \textbf{skein algebra of tangles},  $\mathfrak{T}$, it is useful to \textit{not} impose the boundary skein relations \eqref{gs_1b}, \eqref{gs_2b}, but only \eqref{gs_1f}, \eqref{gs_2f}, \eqref{gs_3f}, so that horizontal composition is still well-defined.
More generally, we can consider a variant of the skein algebra with ``frozen'' boundary marked points. 
Given a subset $N_f\subset N$ of \textit{frozen} marked points, define $\mathrm{Skein}(S,N,N_f,\theta,\eta)$ like $\mathrm{Skein}(S,N,\theta,\eta)$ but without imposing boundary skein relations near $N_f\times\RR$.
An isotopy is still allowed to slide the endpoints of a Legendrian link along $N\times\RR$, but not past each other.
The case of tangles in $J^1[0,1]$ is essentially the case of a disk with two frozen marked points on the boundary.

The main purpose of this section is to describe a relatively smaller class of tangles which generate the skein as a module.
In order to state our result we introduce two special types of tangles.
First, a \textbf{right cusp tangle} is a tangle with a single right cusp and no left cusps, all crossings involving one of the two strands starting at the cusp, which may not cross each other, and no pair of strands crossing twice (see Figure~\ref{fig_cusptangle}).
Up to specifying the grading, a right cusp tangle is determined up to isotopy by the number of strands above and below the right cusp and the number of strands crossed by the bottom and top strands starting at the cusp.

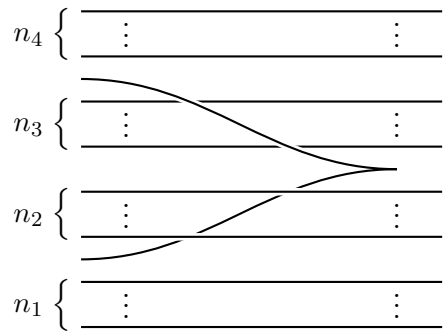
\begin{figure}[ht]
\centering
\begin{tikzpicture}[scale=.6]
\draw[thick] (-4,3.5) to (4,3.5);
\node at (-3,3.15) {\vdots};
\node at (3,3.15) {\vdots};
\draw[thick] (-4,2.5) to (4,2.5);
\draw[thick] (-4,1.5) to (4,1.5);
\node at (-3,1.15) {\vdots};
\node at (3,1.15) {\vdots};
\draw[thick] (-4,.5) to (4,.5);

\draw[white,line width=1mm] (-4,2) to [out=0,in=180] (3,0) to [out=180,in=0] (-4,-2);
\draw[thick] (-4,2) to [out=0,in=180] (3,0) to [out=180,in=0] (-4,-2);

\draw[thick] (-4,-3.5) to (4,-3.5);
\node at (-3,-2.85) {\vdots};
\node at (3,-2.85) {\vdots};
\draw[thick] (-4,-2.5) to (4,-2.5);
\draw[white,line width=1mm] (-4,-1.5) to (4,-1.5);
\draw[thick] (-4,-1.5) to (4,-1.5);
\node at (-3,-.85) {\vdots};
\node at (3,-.85) {\vdots};
\draw[white,line width=1mm] (-4,-.5) to (4,-.5);
\draw[thick] (-4,-.5) to (4,-.5);

\node[left] at (-4,3) {$n_4\;\Big{\{}$};
\node[left] at (-4,1) {$n_3\;\Big{\{}$};
\node[left] at (-4,-1) {$n_2\;\Big{\{}$};
\node[left] at (-4,-3) {$n_1\;\Big{\{}$};

\end{tikzpicture}
\caption{A right cusp tangle.}
\label{fig_cusptangle}
\end{figure}

Second, a \textbf{permutation braid} is a tangle without cusps and with any pair of strands crossing at most once.
The first condition ensures that the tangle defines a permutation $\sigma:\partial_0L\to\partial_1L$, and the ungraded braid is determined up to Legendrian isotopy by $\sigma$.
One way to construct the braid corresponding to $\sigma\in S_n$ is to draw straight lines in the front projection. 
From this it is clear that the number of crossings of the braid is equal to $\mathrm{inv}(\sigma)$, the number of inversions of the permutation.

\begin{prop}\label{prop_tangle_gen}
The skein of tangles is generated as a module by tangles which are a composition, from left to right, of
\begin{enumerate}[1)]
\item
a sequence of right cusp tangles,
\item
a permutation braid,
\item
a sequence of left cusp tangles.
\end{enumerate}
\end{prop}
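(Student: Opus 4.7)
The plan is to prove the proposition by induction on the number of crossings in a generic front diagram for the tangle. In the base case of zero crossings, the diagram consists only of horizontal strands and cusps; any teardrop bounded by a single cusp vanishes via \eqref{gs_3}, and the remaining cusp-only diagram is brought into the required layered form by Legendrian planar isotopy alone, since in the absence of crossings nothing obstructs the migration of cusps toward their target boundaries.

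For the inductive step, take a tangle $T$ with $k \ge 1$ crossings and assume the result for all tangles with strictly fewer crossings. First I would use planar isotopy to push each right cusp leftward past any elementary event whose $z$-range is disjoint from the two cusp strands, and push each left cusp rightward symmetrically. Crossings between a cusp strand and a non-cusp strand need not be moved: by the definition of a right cusp tangle, such crossings may simply be absorbed into the tangle. After this migration, either $T$ is already in normal form, or the central region contains a pair of strands crossing more than once, or some cusp remains blocked by a crossing. In every non-trivial case choose an offending crossing and apply \eqref{gs_1} in its front-projection form \eqref{gs_1f}. The relation rewrites $T$ as $q^{\pm 1}$ times a tangle with the chosen crossing switched, plus smoothing terms of strictly smaller crossing number to which the inductive hypothesis applies directly. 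For the switched-crossing term, either an adjacent Reidemeister II move lowers the count, or by iterating the swap across a maximal block of consecutive same-strand crossings one reaches a configuration where R2 does apply, again reducing the count modulo further smoothing.

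The main obstacle is controlling the cusp pair appearing in the $\delta_{m,n+1}$ smoothing term of \eqref{gs_1}: it introduces fresh cusps in the interior of the diagram, which at first glance works against the normalization program. This is resolved by the fact that each skein application strictly lowers the crossing count, so the inductive hypothesis handles the smoothed term regardless of where its new cusps lie. A remaining subtlety is the combinatorial constraint in a right cusp tangle that the two cusp strands do not cross each other and that no two strands cross twice; violations of these constraints are themselves eliminated by further applications of \eqref{gs_1}, \eqref{gs_2}, or \eqref{gs_3}, each of which again lowers the crossing count, so the induction terminates. The argument mirrors the standard reduction of a braid word to a reduced expression representing the underlying permutation, with the smoothing terms playing the role of the lower-order corrections in a Hecke-algebra style quadratic relation.
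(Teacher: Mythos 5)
Your induction is on the total crossing number, but the skein relation \eqref{gs_1} rewrites a tangle as $q^{\pm1}$ times the \emph{same tangle with one crossing switched} plus smoothing terms; the switched term has exactly the same crossing number, so your inductive measure does not decrease on it and the argument does not close. Your fallback --- ``iterating the swap across a maximal block of consecutive same-strand crossings one reaches a configuration where R2 does apply'' --- identifies no decreasing quantity and is simply false when the offending crossing is one whose two strands cross only once (e.g.\ a crossing blocking a cusp): switching such a crossing produces nothing cancellable by Reidemeister II. The paper avoids this by only ever applying \eqref{gs_1} at a crossing that is one of an \emph{adjacent pair} of crossings between the same two strands (arranged beforehand by Reidemeister III moves inside a braid); then the switched term cancels against its neighbour by Reidemeister II and genuinely drops the count. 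Moreover, the paper's outer induction is on the number of boundary points $|\partial L|$, not on crossings: this is what absorbs the cusp pair created by the $\delta_{m,n+1}$ term, since after composing with the rest of the tangle that term has two fewer endpoints. You would need some such auxiliary measure as well, since the new interior cusps must themselves be normalized and that process again emits switched-crossing terms.

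The second gap is the cusp normalization itself. Getting every right cusp to the left of, and every left cusp to the right of, all crossings is the substantive first half of the proof; it is a case-by-case analysis (the paper defers to Rutherford~\cite{rutherford06}) in which one repeatedly examines the basic tangle adjacent to the right-most left cusp and reduces the number of crossings to its right using Reidemeister \emph{and} skein moves. Your claim that blocked cusps are handled by planar isotopy plus ``absorbing'' crossings into the cusp tangle only covers crossings on the correct side of the cusp satisfying the no-double-crossing condition; a crossing sitting between a left cusp and the braid region that involves a cusp strand can be neither slid past nor absorbed, and eliminating it requires exactly the skein moves whose correction terms your induction cannot yet control.
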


\begin{proof}
If $L$ is an arbitrary tangle then we need to show that it may be written, as an element of the skein, as a linear combination of tangles of the form described in the statement of the proposition.
We show this by induction on the number $|\partial L|$ of endpoints of $L$.

As a first step we show that $L$ is a linear combination of tangles which have, from left to right, a sequence of right cusp tangles, a braid (tangle without cusps), and a sequence of left cusp tangles.
Let us take care of the left cusps first. 
The strategy, following Rutherford~\cite{rutherford06}, is to focus on the right-most left cusp, which is part of some maximal left cusp tangle $C$.
If there are no more right cusps or crossings to the right of $C$ we can continue by induction with the part of the tangle to the left of $C$, which has less boundary points.
Otherwise, look at the basic tangle immediately to the right of $C$.
A case-by-case analysis shows that using Reidemeister and skein moves we can always reduce to tangles with fewer crossings to the right of the right-most left cusp.
We refer to \cite{rutherford06} for details.
Apply the same strategy to right cusps.

The second step is to reduce from braids to permutation braids.
Use induction on the number of crossings of the braid, so we need to analyze the following situation.
Suppose $P$ is a permutation braid and we compose it on the left with a basic tangle $B$ with a single crossing of the $i$-th and $(i+1)$-st strands.
There are two cases depending on $B$. 
If the strands of $P$ starting at the $i$-th and and $(i+1)$-st boundary point on the left of $P$ do not cross in $P$, then the composition $BP$ is a permutation braid.
Otherwise, if the strands do cross in $P$, we can apply third Reidemeister moves to $P$ to move this crossing to the left, so $P=BP'$ and we want to simplify $BBP'$.
We apply the skein relation \eqref{gs_1} to $BB$ which allows us to write $BBP'$ as a linear combination of $BP'$, $P'$, and a tangle $CC'P'$ with a right cusp in $C$ and a left cusp in $C'$, where some terms may not be present depending on the grading.
The tangles $BP'=P$ and $P'$ are permutation braids and induction takes care of $C'P'$ which has two boundary points less.
\end{proof}

\subsection{Disk}
\label{subsec_disk}

In this subsection we consider the special case where $S$ is a disk with $n+1=|N|$ marked points on the boundary.
For concreteness we take $S$ to be the closed unit disk in $\CC$ and $N=\{p_0,\ldots,p_n\}$ the set of $(n+1)$-st roots of unity, where $p_k:=\exp(2\pi ik/(n+1))$.
Since $S$ is simply connected, all grading structures are equivalent.
For brevity write $\mathrm{Skein}:=\mathrm{Skein}(S,N,\theta,\eta)$.

The Fukaya category $\mc{F}=\mc{F}(S,N,\theta,\eta,\KK)\cong \mc{F}^{\vee}(S,N,\theta,\eta,\KK)$ is equivalent to the bounded derived category $D^b(\mathrm{Rep}(A_n))$ of representations of any $A_n$ quiver, see for example \cite{hkk}.
We will not use this equivalence directly here, but for computing $\mathrm{Hall}(\mc F)$ we use two facts about $\mc F$.
The first is the classification of indecomposable objects and the second is a particular slicing.
Isomorphism classes of indecomposable objects in $\mc F$ are indexed by elements of the set
\[
I:=\{(i,j)\in\ZZ^2\mid 0<j-i\leq n\}
\] 
where the object $E_{i,j}\in\Ob(\mc F)$ corresponding to $(i,j)\in I$ has an underlying curve whose projection to $S$ is an arc (say, straight line) connecting $p_i$ and $p_j$ with grading such that
\begin{enumerate}[1)]
\item
$E_{i,j}[1]=E_{j,i+n+1}$,
\item
$\phi\in [0,1)$ if and only if $-(n+1)/2\leq i+j<(n+1)/2$.
\end{enumerate}
Here $\phi\in [0,1)$ corresponds to the grading where the arc is labeled by ``0'' with our usual conventions.
The precise Legendrian lift of the straight line in $S$ to $S\times \RR$ will be irrelevant.
Also, all $\KK$-linear rank one local systems on $E_{i,j}$ are of course isomorphic.

The category $\mc F$ admits a slicing $\left(\mc F_\phi\right)_{\phi\in\RR}$ where $\mc F_\phi$ has indecomposable objects $E_{i,j}$ with $\frac{i+j}{n+1}=\phi$.
Each $\mc F_\phi$ is semisimple category with simple objects represented by parallel disjoint arcs.
This slicing comes, up to reparametrization of $\RR$, from the stability condition for the quadratic differential $\exp(z^{n+1})dz^2$, see \cite{hkk}.
The stability condition is also characterized, up to a $\CC^\times$ factor, by the fact that it is preserved by $\Aut(\mc F)$ up to the action of $\CC^\times$, i.e. lies at the \textit{orbifold point} in the space of stability conditions.

\begin{theorem}\label{thm_phi_disk}
For the disk with $n+1$ marked points on the boundary, $\Phi$ is an isomorphism.
Thus, the graded Legendrian skein algebra of the disk with $n+1$ marked points on the boundary, specialized at a prime power $q$, is isomorphic to the Hall algebra of the bounded derived category of representations of the $A_n$ quiver over $\FF_q$.
\end{theorem}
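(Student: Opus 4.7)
The plan is to exhibit matching bases on the two sides: a PBW-type basis of the Hall algebra coming from the slicing described before the statement, and a family of standard Legendrian links in the disk whose images under $\Phi$ form this basis up to nonzero scalars.

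First I would invoke the slicing $\mc F = \bigoplus_{\phi} \mc F_{\phi}$. Each $\mc F_\phi$ is semisimple with simples $E_{i,j}$ that are mutually disjoint parallel arcs, hence rigid and mutually orthogonal. Because the slicing comes from a stability condition, $\Ext^1(A,B)=0$ for $A\in\mc F_\phi$ and $B\in\mc F_{\phi'}$ with $\phi\leq\phi'$, so the split condition \eqref{slicing_split} holds and Subsection~\ref{subsec_hall} gives a $\QQ$-vector space isomorphism
\[
\mathrm{Hall}(\mc F) \;\cong\; \bigotimes_{\phi}\bigotimes_{E\in\mathrm{Ind}(\mc F_\phi)} \mathrm{Hall}\bigl(\langle E\rangle\bigr),
\]
where $\langle E\rangle$ denotes the full subcategory of direct sums of $E$. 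Each rigid factor has $\QQ$-basis $\{[E^{\oplus k}]\}_{k\geq 0}$, so $\mathrm{Hall}(\mc F)$ acquires a PBW basis indexed by finitely supported functions $m\colon I\to\ZZ_{\geq 0}$.

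Next, for each such $m$ I would define a standard link $L_m\subset S\times\RR$ as follows. For $(i,j)\in I$, let $L_{i,j}$ be an embedded Legendrian arc whose front projection is the straight chord from $p_i$ to $p_j$, graded so as to realize $E_{i,j}$. Form $L_m$ by taking $m(i,j)$ parallel disjoint copies of $L_{i,j}$ in $S$ for each pair, and stacking the resulting families in the $z$-direction in decreasing order of $\phi$. Since each $L_{i,j}$ has no self-crossings, the positive part of its endomorphism algebra vanishes, $\mc{MC}(L_{i,j},E)=\{0\}$, and the formula \eqref{Phi_formula} yields $\Phi(L_{i,j})=(q-1)^{-1}[E_{i,j}]$. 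Because $\Phi$ is an algebra homomorphism (Theorem~\ref{thm_homomorphism}) and the stacking order matches the slicing, $\Phi(L_m)$ is the PBW basis element above up to a nonzero scalar in $\QQ$. In particular $\Phi$ is surjective and $\{\Phi(L_m)\}_m$ is a $\QQ$-basis of $\mathrm{Hall}(\mc F)$.

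The hard part is showing that $\{L_m\}$ spans the skein. I would argue by induction on a suitable complexity of a generic front projection, for instance the pair (number of crossings, number of cusps) in lexicographic order. At each step one uses \eqref{gs_1f} to trade a crossing for simpler diagrams, \eqref{gs_2f} and \eqref{gs_3f} to remove unknots and monogons, and the boundary relations \eqref{gs_1fb}, \eqref{gs_2fb} to straighten endpoints at marked points. The terminal configurations are disjoint unions of straight chords, and a final application of \eqref{gs_1} interchanges commuting chords into the prescribed slicing order. This reduction parallels Proposition~\ref{prop_tangle_gen} but is sharper because every endpoint in the disk is adjacent to a marked point where the boundary relations apply. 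Once spanning is established, the linear independence of $\{\Phi(L_m)\}$ in $\mathrm{Hall}(\mc F)$ forces the linear independence of $\{L_m\}$ in the skein, completing the proof that $\Phi$ is an isomorphism.
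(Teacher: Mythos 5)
Your overall strategy --- matching the basis of $\mathrm{Hall}(\mc F)$ given by isomorphism classes of objects (equivalently, multiplicity functions on the indecomposables $E_{i,j}$) against ordered stacks of straight chords in the skein --- is essentially the paper's, which packages it as an explicit inverse $\Psi([E]):=(q-1)^m E_{i_1,j_1}\cdots E_{i_m,j_m}$ and verifies $\Phi\Psi=\mathrm{id}$ directly from \eqref{Phi_formula}; that route never actually needs the tensor decomposition of the Hall algebra, only that the ordered stack of chords has $e(L)=0$ and $\mc{MC}(L)=\{0\}$. One small repair: $\Ext^1(A,B)=0$ for $A\in\mc F_\phi$, $B\in\mc F_{\phi'}$ with $\phi\le\phi'$ does \emph{not} follow merely from the slicing coming from a stability condition (the slicing axiom only kills $\Hom(A,B[1])$ when $\phi>\phi'+1$), so the split condition \eqref{slicing_split} must be checked for this particular slicing or sidestepped as the paper does.

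The genuine gap is in the spanning step. Your proposed induction on the pair (number of crossings, number of cusps) in lexicographic order does not terminate: applying \eqref{gs_1} to a crossing produces, besides the smoothings, the term $q^{\pm 1}$ times the link with that crossing \emph{switched}, which has exactly the same crossing and cusp counts, so interior crossings cannot be ``traded for simpler diagrams'' by \eqref{gs_1} alone. The paper's route around this is to first put the link in the normal form of Proposition~\ref{prop_tangle_gen} (right cusp tangles, then a permutation braid, then left cusp tangles --- itself relying on Rutherford's more delicate induction on crossings to the right of the right-most left cusp), and then to resolve crossings only at the left-most position, adjacent to the boundary, where the switched-crossing term is either an honest isotopy \eqref{disk_b2} or strictly decreases the number of endpoints via \eqref{disk_b1} and the boundary relation \eqref{gs_1b}; a separate induction on total chord length (Lemma~\ref{lem_disk_span}) then handles the reordering of the resulting straight segments into the slicing order. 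Some replacement for this bookkeeping is needed before your reduction can be considered complete.
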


\begin{lemma}
$\mathrm{Skein}$ is generated, as an algebra, by the straight the line segments $E_{i,j}$, $(i,j)\in I$.
\end{lemma}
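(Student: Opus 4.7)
The plan is to prove the generation statement by reducing any graded Legendrian link in $S\times\RR$ to a product of straight-line arcs via repeated application of the skein relations, the argument running in parallel with the proof of Proposition~\ref{prop_tangle_gen}.

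First I would use induction on the number of crossings of the Lagrangian projection of $L$. At each interior crossing I apply \eqref{gs_1}, and at each crossing on $\partial S$ I apply \eqref{gs_1b}; relation \eqref{gs_3} kills any diagram acquiring a teardrop, while \eqref{gs_2} and \eqref{gs_2b} strip off standard small loops at the cost of a scalar $(q-1)^{-1}$. This expresses $L$ modulo the skein relations as a $\ZZ[q^{\pm},(q-1)^{-1}]$-linear combination of links whose projection has strictly fewer crossings.

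The base case is a link with no crossings in its projection. Each connected component is then either an embedded arc between marked points or an embedded closed curve in the interior of $S$. The latter case is ruled out by a simple area argument: along any closed Legendrian curve $\gamma$ one has $\oint_\gamma \theta = -\oint_\gamma dz = 0$, so by Stokes the projection bounds a region of zero total $d\theta$-area. But $S$ is simply connected and $d\theta$ is a nowhere-vanishing area form, so any embedded closed projection bounds a region of strictly positive area, a contradiction. Therefore only embedded arcs appear.

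Finally, each embedded arc from $p_i$ to $p_j$ is Legendrian-isotopic, fixing endpoints in $N\times\RR$, to a straight-line representative, which up to the shift identification $E_{k,\ell}[1]=E_{\ell,k+n+1}$ is one of the generators $E_{i',j'}$ with $(i',j')\in I$. Since the algebra product is stacking in $z$, a disjoint union of embedded arcs at ordered heights $z_1<\cdots<z_k$ is by definition the product $E_{i_1,j_1}\cdots E_{i_k,j_k}$ in $\mathrm{Skein}$. The main obstacle is to choose a complexity measure on link diagrams (for instance, lexicographic in number of crossings, teardrops, and small unknots) that strictly decreases under each skein move so that the induction terminates; this is a routine but careful bookkeeping exercise mirroring the proof of Proposition~\ref{prop_tangle_gen}.
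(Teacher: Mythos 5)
There is a genuine gap at the heart of your inductive step. Relation \eqref{gs_1} does \emph{not} express a link as a combination of links with strictly fewer crossings: it reads $L_+ - q^{(-1)^{m-n}}L_- = (\text{smoothed terms})$, so it only lets you \emph{switch} a crossing (replace $L_+$ by $L_-$, which has exactly the same number of crossings) modulo terms with fewer crossings. Consequently your proposed complexity measure --- lexicographic in crossings, teardrops and small unknots --- does not strictly decrease under the skein moves, and the induction does not terminate as stated. The ``routine bookkeeping'' you defer is in fact the substance of the proof: one must show that after finitely many crossing switches the link can be isotoped to a standard form or split as a product, and this requires a genuine normalization argument. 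The paper does this by inducting on the number of \emph{endpoints} rather than crossings, first invoking Proposition~\ref{prop_tangle_gen} (whose proof uses Rutherford's procedure of pushing the rightmost left cusp and a case analysis of Reidemeister and skein moves) to reduce to tangles with no left cusps, then using the boundary relations \eqref{disk_b1}, \eqref{disk_b2} and \eqref{gs_1b} to reorder endpoints, and finally a three-case analysis for sliding a right cusp past the strands below it. None of this is captured by your sketch, and no elementary crossing-count induction is known to replace it; this is the same reason the classical HOMFLY argument needs the notion of a descending diagram, which moreover is unavailable here because the over/under data at a crossing of a Lagrangian projection is constrained by global area conditions and cannot be prescribed freely.

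The parts of your argument that do work are peripheral: the Stokes argument ruling out closed components with embedded projection is correct (a closed Legendrian curve satisfies $\oint\theta=-\oint dz=0$, while an embedded closed curve in the disk bounds positive $d\theta$-area), and the identification of a crossingless diagram with a product of straight arcs is fine. But these only handle the base case; the reduction to the base case is where the lemma actually lives.
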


\begin{proof}
When using the front projection it will be convenient to adopt a half-plane model where $S$ is the closed right half-plane $\{\mathrm{Re}(z)\geq 0\}\subset\CC$ and $p_k=-\sqrt{-1}k$, $k=0,\ldots,n$.
Thus, under front projection, a strand ends at $p_k$ if the slope at the boundary is $-p_k$.
In depictions of Legendrian links one can simply draw the strands as horizontal near the boundary (omitting a tiny ``bend'' at the end) and label them by $k$ if they end at $p_k$.
From this discussion it is also clear that, after breaking the cyclic symmetry of the disk with marked points, there is a map from the skein of tangles $L$ with $\partial_1L=\emptyset$ and endpoints in $\partial_0L$ labeled by $\{0,\ldots,n\}$ to the skein of the disk.

We will need to modify links by moving endpoints past each other in the front projection. 
If they are labeled by the same integer, that is end at the same $p_k$, then the boundary skein relation \eqref{gs_1b} holds.
If $i>j$ then
\begin{equation}\label{disk_b2}
\begin{tikzpicture}[baseline=-\dimexpr\fontdimen22\textfont2\relax,scale=.8]
\draw[thick] (0,-.25) to [out=0,in=180] (1,.25);
\draw[white,line width=1mm] (0,.25) to [out=0,in=180] (1,-.25);
\draw[thick] (0,.25) to [out=0,in=180] (1,-.25);
\node[left] at (.1,.25) {$i$};
\node[left] at (.1,-.25) {$j$};
\draw[densely dotted] (0,-1) to (0,1);
\end{tikzpicture}%
=
\begin{tikzpicture}[baseline=-\dimexpr\fontdimen22\textfont2\relax,scale=.8]
\draw[thick] (0,-.25) to [out=0,in=180] (1,-.25);
\draw[thick] (0,.25) to [out=0,in=180] (1,.25);
\node[left] at (.1,.25) {$j$};
\node[left] at (.1,-.25) {$i$};
\draw[densely dotted] (0,-1) to (0,1);
\end{tikzpicture}%
\end{equation}
which is just Legendrian isotopy and does not use any skein relations, and if $i<j$ then
\begin{equation}\label{disk_b1}
\begin{tikzpicture}[baseline=-\dimexpr\fontdimen22\textfont2\relax,scale=.8]
\draw[thick] (0,-.25) to [out=0,in=180] (1,.25);
\draw[white,line width=1mm] (0,.25) to [out=0,in=180] (1,-.25);
\draw[thick] (0,.25) to [out=0,in=180] (1,-.25);
\node[left] at (.1,.25) {$i$};
\node[left] at (.1,-.25) {$j$};
\draw[densely dotted] (0,-1) to (0,1);
\end{tikzpicture}%
=
\begin{tikzpicture}[baseline=-\dimexpr\fontdimen22\textfont2\relax,scale=.8]
\draw[thick] (0,-.25) to [out=0,in=180] (1,.25);
\draw[white,line width=1mm] (0,.25) to [out=0,in=180] (1,-.25);
\draw[thick] (0,.25) to [out=0,in=180] (1,-.25);
\draw[thick] (1,-.25) to [out=0,in=180] (2,.25);
\draw[white,line width=1mm] (1,.25) to [out=0,in=180] (2,-.25);
\draw[thick] (1,.25) to [out=0,in=180] (2,-.25);
\node[left] at (.1,.25) {$j$};
\node[left] at (.1,-.25) {$i$};
\node[blue,above] at (1,.2) {\scriptsize{n+1}};
\node[blue,below] at (1,-.2) {\scriptsize{m}};
\draw[densely dotted] (0,-1) to (0,1);
\end{tikzpicture}%
=q^{-(-1)^{m-n}}\left((1+\delta_{m,n+1}(1-q^{-1}))
\begin{tikzpicture}[baseline=-\dimexpr\fontdimen22\textfont2\relax,scale=.8]
\draw[thick] (0,-.25) to [out=0,in=180] (1,-.25);
\draw[thick] (0,.25) to [out=0,in=180] (1,.25);
\node[left] at (.1,.25) {$j$};
\node[left] at (.1,-.25) {$i$};
\draw[densely dotted] (0,-1) to (0,1);
\end{tikzpicture}%
-\delta_{m,n}(q-1)
\begin{tikzpicture}[baseline=-\dimexpr\fontdimen22\textfont2\relax,scale=.8]
\draw[thick] (0,-.25) to [out=0,in=180] (.5,0) to [out=180,in=0] (0,.25);
\draw[thick] (1.5,.25) to [out=180,in=0] (1,0) to [out=0,in=180] (1.5,-.25);
\node[left] at (.1,.25) {$j$};
\node[left] at (.1,-.25) {$i$};
\draw[densely dotted] (0,-1) to (0,1);
\end{tikzpicture}%
\right)
\end{equation}
by \eqref{disk_b1} and \eqref{gs_1}.
(Warning: The integer labels $i,j$ here refer to the endpoints $p_i,p_j$ and not the grading as in \eqref{gs_1b}.)

Let $A \subseteq \mathrm{Skein}$ be the subalgebra generated by straight line segments and let $B_k\subset\mathrm{Skein}$ be the submodule generated by links with $\leq k$ endpoints.
We will prove by induction that $B_k\subset A$ for all $k$, which implies the statement of the lemma since $\bigcup_k B_k=\mathrm{Skein}$.

By Proposition~\ref{prop_tangle_gen} the submodule $B_k$ is generated by links $L$ with $|\partial L|\leq k$ and no left cusps under front projection.
Suppose $L$ is such a link and decompose it into basic tangles.
If left-most basic tangle of $L$ has a crossing, we can use \eqref{disk_b1}, \eqref{disk_b2}, or the boundary skein relation \eqref{gs_1b} to write $L$ in terms of links with fewer crossings and possibly less endpoints.
Thus, by induction, it remains to deal with the case where the left-most basic tangle of $L$ has a right cusp.
Let $p_i$ (resp. $p_j$) be the endpoint of the lower (resp. upper) strand starting at that cusp.
If $i=j$ we can remove the left cusp using the boundary skein relation \eqref{gs_2b} to get a link with fewer endpoints.
If there are no strands below the cusp, then $L$ is a product of the link with just the cusp, which is isotopic to a straight line segment, and a link with fewer endpoints.
If there is a strand below the cusp ending at $p_k$ then there are the following cases to consider.
\begin{itemize}
\item
If $k<i$ or $k>j$ then we can move the cusp past the strand below it by an isotopy.
\item
If $k=i$ or $k=j$ then we can move the cusp past the strand below it possibly modulo links with less endpoints by the boundary skein relation \eqref{gs_1b}.
\item
If $i<k<j$ then we can move the cusp past the strand below it modulo links with a left cusps which connects $p_r,p_s$ with $|r-s|<|i-j|$.
\end{itemize}
This is easier to see in the Lagrangian projection.
Thus by induction we eventually reduce to one of the base cases above.
\end{proof}

\begin{lemma} \label{lem_disk_span}
$\mathrm{Skein}$ is spanned, as a module, by products of the form
\[
E_{i_1,j_1}\cdots E_{i_m,j_m}
\]
where $i_1+j_1\geq i_2+j_1\geq \ldots \geq i_m+j_m$.
\end{lemma}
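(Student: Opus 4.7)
By the preceding lemma, $\mathrm{Skein}$ is spanned as a module by arbitrary products $P = E_{i_1,j_1}\cdots E_{i_m,j_m}$, so it suffices to show that every such product lies in the span of those with $s_1\geq s_2\geq\cdots\geq s_m$, where $s_k := i_k + j_k$. The approach is a bubble-sort argument: given an adjacent inversion in $P$, I use the skein relations to rewrite it as a sum of products of strictly smaller ``complexity,'' and iterate until the product is in decreasing-$s$ form.

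Define the complexity
\[
c(P) := \bigl( m,\ \textstyle\sum_k s_k^2,\ \mathrm{inv}(P)\bigr)\in\NN^3
\]
ordered lexicographically, where $\mathrm{inv}(P) = |\{a<b : s_a<s_b\}|$. I proceed by induction on $c(P)$, the base case $\mathrm{inv}(P)=0$ being immediate. For the inductive step, suppose $P$ contains adjacent factors $E_{i,j}E_{k,l}$ with $i+j<k+l$. Three geometric cases arise for the chords $\overline{p_ip_j}$ and $\overline{p_kp_l}$ in the disk (the remaining case $\{i,j\}=\{k,l\}$ gives $s_1=s_2$ and is not an inversion). \emph{(A) Disjoint chords} (non-interleaving and sharing no endpoint): the two factors have disjoint support in $S$, so $E_{i,j}E_{k,l}=E_{k,l}E_{i,j}$ by isotopy, strictly decreasing $\mathrm{inv}(P)$ by $1$ while fixing $m$ and $\sum s_k^2$. \emph{(C) Shared endpoint}: apply the boundary skein relation \eqref{gs_1b} at the common boundary point; the main term is $q^{\pm 1}E_{k,l}E_{i,j}$ (reducing $\mathrm{inv}$), and the $\delta$-correction fuses the two arcs into a single chord, reducing $m$ by $1$. \emph{(B) Interleaved chords} (one transverse interior crossing, say with $i<k<j<l$): apply \eqref{gs_1} at the crossing; the main term $q^{\pm 1}E_{k,l}E_{i,j}$ reduces $\mathrm{inv}$, while the $\delta$-correction replaces the pair by a non-interleaving pair of chords obtained by re-pairing $\{p_i,p_j,p_k,p_l\}$. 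For the nested re-pairing $\{E_{i,l},E_{j,k}\}$ a direct computation gives
\[
(i+l)^2 + (j+k)^2 - (i+j)^2 - (k+l)^2 = 2(l-j)(i-k) < 0,
\]
so $\sum s_k^2$ strictly decreases in this correction term.

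In every case each summand on the right has strictly smaller $c$ than $P$, so the induction terminates and the result follows. I anticipate that the main obstacle will be verifying case (B): the two possible re-pairings of $\{p_i,p_j,p_k,p_l\}$ have sums of squares on opposite sides of $(i+j)^2+(k+l)^2$ (the nested one decreases it, the disjoint one increases it), and which re-pairing is selected by the $\delta_{m,n}$ or $\delta_{m,n+1}$ term in \eqref{gs_1} is governed by the intersection index $m-n$ at the crossing. This index is determined by the gradings on $E_{i,j}$ and $E_{k,l}$ fixed at the start of the subsection (the convention $\phi\in[0,1)$ iff $-(n+1)/2\leq i+j<(n+1)/2$, together with $E_{i,j}[1]=E_{j,i+n+1}$), and a careful but routine calculation of the index under the hypothesis $i+j<k+l$ yields the nested resolution, so the complexity-decreasing term is the one that actually appears. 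A secondary point to check is that each resolution term in case (B) is genuinely a product of straight chords $E_{i',j'}\in I$, which follows from the fact that any chord in the disk with endpoints in $N$ is isotopic to a unique straight chord (possibly after a grading shift).
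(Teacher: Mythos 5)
Your overall strategy (reduce to generation by the $E_{i,j}$ via the preceding lemma, then bubble-sort with a lexicographic termination measure) is the same as the paper's, and your cases (A) and (C) are fine. The gap is in case (B), which you yourself flag as the main obstacle and then leave unverified — and it is not the routine check you suggest. First, the skein relation \eqref{gs_1} has two possible correction terms (the $\delta_{m,n}$ and $\delta_{m,n+1}$ smoothings), which produce the two different re-pairings of the four endpoints; your measure $\sum_k (i_k+j_k)^2$ strictly decreases for one of them and strictly \emph{increases} for the other (your own computation shows the two re-pairings lie on opposite sides), so the induction lives or dies on identifying which term survives, and you never carry out that index computation. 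Second, your parametrization of case (B) by $i<k<j<l$ is not correct in general: the chords depend on the indices only modulo $n+1$ (e.g.\ $E_{i,j}$ and $E_{j,i+n+1}$ are the same arc with shifted grading), so two interior-crossing chords need not have interleaved integer representatives, and the identity $(i+l)^2+(j+k)^2-(i+j)^2-(k+l)^2=2(l-j)(i-k)$ presupposes that the smoothed arcs carry exactly the gradings $E_{i,l}$ and $E_{k,j}$. A grading shift changes $i+j$ by $n+1$, which can flip the sign of the comparison, and you acknowledge the possibility of such shifts only in a parenthetical without accounting for their effect on the measure.

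The paper avoids all of this by choosing a grading-independent measure: the total Euclidean length of the straight chords in the disk model. For two crossing segments, \emph{both} re-pairings of the endpoints strictly decrease total length by the triangle inequality (and likewise the fused arc in the shared-endpoint case), so one never needs to determine which smoothing the grading selects, nor the gradings of the resulting arcs. If you replace $\sum_k(i_k+j_k)^2$ by total chord length in your complexity triple, your argument closes up and becomes essentially the paper's proof; as written, case (B) is a genuine gap.
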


\begin{proof}
This follows from the previous lemma and its proof.
It is convenient to return to the disk model and the Lagrangian projection where the $E_{i,j}$ are straight lines.
The boundary skein relation \eqref{gs_1b} ensures that we can change the order of two straight line segments modulo terms where the total lengths of the segments is strictly smaller.
\end{proof}

\begin{proof}[Proof of Theorem~\ref{thm_phi_disk}]
We will define a linear map
\[
\Psi:\mathrm{Hall}(\mc F)\longrightarrow\mathrm{Skein}\otimes_{\ZZ[t^{\pm},(1-t)^{-1}]}\QQ
\]
and show that it is inverse to $\Phi$.
Let $E\in\Ob(\mc F)$, then it has a decomposition
\[
E=E_{i_1,j_1}\oplus \ldots \oplus E_{i_m,j_m}
\]
where $i_1+j_1\geq i_2+j_1\geq \ldots \geq i_m+j_m$.
The summands in this decomposition and their order are unique up to permutation of those $E_{i_k,j_k}$ with the same $i_k+j_k$.
Let
\[
\Psi([E]):=(q-1)^m E_{i_1,j_1}\cdots E_{i_m,j_m}
\]
which is well-defined since $E_{i,j}$ commutes with $E_{i',j'}$ in the skein algebra if $i+j=i'+j'$.

We first check $\Phi\Psi=\mathrm{id}$.
Let
\[
L:=E_{i_1,j_1}\cdots E_{i_m,j_m}
\]
where $i_1+j_1\geq i_2+j_1\geq \ldots \geq i_m+j_m$.
The way in which we have ordered the factors ensures that $e(L)=0$ an $\mc{MC}(L)=\{0\}$, so $\Phi(L)=(q-1)^{-m}L$ by \eqref{Phi_formula}.
The identity $\Psi\Phi=\mathrm{id}$ now follows from surjectivity of $\Psi$, which is Lemma~\ref{lem_disk_span}.
\end{proof}

\subsection{Annulus}
\label{subsec_annulus}

In this subsection we look at the case $S=\RR/\ZZ\times [-1,1]$, $N=\emptyset$, $\theta=-ydx$, and $\eta=\partial/\partial x$.
Let $\mc{F}=\mc{F}(S,N,\theta,\eta,\KK)$ be the wrapped Fukaya category, and $\mc{F}^{\vee}=\mc{F}^{\vee}(S,N,\theta,\eta,\KK)$ the subcategory of objects coming from compact curves.
Determining the structure of these categories is one of the first exercises in Homological Mirror Symmetry (in the non-compact setting), see \cite{aaeko} for the more involved case of spheres with $n\geq 3$ punctures.
The result is that
\[
\mc F\cong D^b(\mathrm{Mod}_{\mathrm{fg}}(\KK[x^{\pm}])),\qquad
\mc F^{\vee}\cong D^b(\mathrm{Mod}_{\mathrm{fd}}(\KK[x^{\pm}]))
\]
where $\mathrm{Mod}_{\mathrm{fg}}(\KK[x^{\pm}])$ (resp. $\mathrm{Mod}_{\mathrm{fd}}(\KK[x^{\pm}])$) is the category of finitely generated (resp. finite-dimensional) modules over the ring of Laurent polynomials.
Under this equivalence the generator of $\mc F$ corresponding to the free module of rank one is the curve $\{0\}\times [-1,1]$ and the simple modules, which are contained in both $\mc F$ and $\mc F^{\vee}$, correspond to the closed curve $\RR/\ZZ\times \{0\}$ with any rank one local system.

We wish to show the following.

\begin{theorem}
\label{thm_annulus}
The homomorphism $\Phi$ from the skein algebra of the standard annulus as above to the Hall algebra of the Fukaya category $\mc F$ is injective.
\end{theorem}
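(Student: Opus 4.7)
The plan is to use a slicing of the Fukaya category to decompose both sides and reduce injectivity to a classical computation on the HOMFLY-type skein of the annulus.

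First, since $\mc F^\vee \simeq D^b(\mathrm{Mod}_{\mathrm{fd}}(\KK[x^{\pm}]))$ is hereditary (as the derived category of modules over the localized PID $\KK[x^{\pm}]$), its standard bounded $t$-structure extends to a slicing $(\mc F^\vee_n)_{n\in\ZZ}$ with $\mc F^\vee_n$ the $n$-th shift of the heart $\mathrm{Mod}_{\mathrm{fd}}(\KK[x^{\pm}])$. Vanishing of $\Ext^{\geq 2}$ gives the splitting condition \eqref{slicing_split}, so the tensor decomposition \eqref{hall_tprod_decomp} yields
\[
\mathrm{Hall}(\mc F^\vee) \;\cong\; \bigotimes_{n\in\ZZ} H,
\]
where $H$ is the classical Hall algebra of $\mathrm{Mod}_{\mathrm{fd}}(\KK[x^{\pm}])$, one copy for each integer shift.

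Next, I would match this with a decomposition of the skein. Attach to each connected component of a graded Legendrian link $L\subset S\times\RR$ a shift $n(\gamma)\in\ZZ$ read off from its graded isotopy class (equivalently, its Maslov index relative to a chosen grading on the core circle). Using \eqref{gs_1}, show that the skein algebra admits a filtration by total winding number whose associated graded algebra splits as
\[
\mathrm{gr}\,\mathrm{Skein}(S,\emptyset,\theta,\eta) \;\cong\; \bigotimes_{n\in\ZZ}\mathrm{Skein}_n,
\]
with $\mathrm{Skein}_n$ generated by links all of whose components have shift $n$. The homomorphism $\Phi$ is compatible with this structure, since a graded Legendrian link supported on components of shift $n$ lands in the subcategory $\mc F^\vee_n$ of the slicing. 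Injectivity of $\Phi$ thus reduces to injectivity of each restriction $\Phi_n:\mathrm{Skein}_n\to H$.

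Finally, for fixed $n$ the components of a link are graded Legendrian copies of the core curve of various winding numbers, and $\mathrm{Skein}_n$ becomes a variant of the classical HOMFLY skein of the annulus. By Turaev~\cite{turaev} and Morton--Samuelson~\cite{morton_samuelson}, this is a polynomial ring freely generated by classes $w_k$, $k\in\ZZ\setminus\{0\}$. Using the explicit formula \eqref{Phi_formula} and the fact that rank one local systems on the core curve are classified by $\KK^{\times}$, one computes $\Phi_n(w_k)$ as a nonzero scalar multiple of the quantum power sum over isomorphism classes of simple modules $\KK[x^{\pm}]/(f)$ of total length $|k|$. A classical result of Macdonald and Schiffmann identifies these power sums with algebraically independent generators of a polynomial subalgebra of $H$, so each $\Phi_n$ is injective, and hence so is $\Phi$. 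The main obstacle will be the second step: rigorously establishing the tensor product decomposition of the associated graded skein, since this requires a careful analysis of \eqref{gs_1} applied to components of different windings and shifts, essentially amounting to a quantization-of-Goldman-bracket computation in the graded Legendrian setting.
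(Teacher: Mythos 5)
Your strategy overlaps with the paper only on the target side (the slicing decomposition \eqref{hall_tprod_decomp} of the Hall algebra); on the source side it has a genuine gap. The paper never decomposes the skein algebra as a tensor product. Instead it first proves a \emph{spanning} statement (Proposition~\ref{prop_annulus_skein_basis}, via the tangle normal form of Proposition~\ref{prop_tangle_gen}, Lemmas~\ref{lem_annulus_gen1} and~\ref{lem_annulus_gen2}, and Bigelow's basis of $H_n/[H_n,H_n]$): every element of the skein is a linear combination of the ordered products \eqref{annulus_skein_basis}. Injectivity then follows by showing that these products map to scalar multiples of a basis of the Hall algebra. Your second step --- the filtration by winding number whose associated graded splits as $\bigotimes_n\mathrm{Skein}_n$ --- is exactly the hard combinatorial content that this spanning argument replaces, and you have deferred it rather than proved it. Note also that even granting the decomposition of the associated graded, injectivity of each $\mathrm{gr}\,\Phi_n$ only yields injectivity of $\Phi$ if you verify strict compatibility of $\Phi$ with the filtrations on both sides, which again requires controlling how \eqref{gs_1} mixes components of different shifts; a priori the skein could have relations invisible on the associated graded.

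The final step is also not correct as stated. By the computation in Lemma~\ref{lem_cyclic_braid_comp}, $\Phi(C_k)$ is $(q-1)^{-1}$ times the sum over all \emph{cyclic} $k$-dimensional $\KK[x^{\pm}]$-modules (companion matrices), not a power sum over simple modules of total length $k$; these are different elements of the classical Hall algebra. The paper sidesteps the need for any Macdonald-type identity by composing with the projection $J^!$ onto the Hall algebra of the unipotent subcategory $\mathrm{Mod}_{\mathrm{un}}$, under which the sum of companion matrices collapses to the single indecomposable $[I_k]$, and $\mathrm{Hall}(\mathrm{Mod}_{\mathrm{un}})$ is free commutative on the $[I_k]$. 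Finally, the graded Legendrian skein here is built from unoriented curves and is noncommutative across shifts, so the generators are indexed by $k>0$ rather than $k\in\ZZ\setminus\{0\}$, and ``polynomial ring freely generated by $w_k$'' must be replaced by the ordered monomial basis \eqref{annulus_skein_basis}.
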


The image of $\Phi$ can be described more explicitly and we will do so below after making appropriate definitions.
We will prove Theorem~\ref{thm_annulus} by finding an explicit basis of the skein algebra.
A basis of the ordinary (non-Legendrian) skein algebra was found by Turaev~\cite{turaev88,turaev}.
The Legendrian case is a bit more subtle, for example the algebra is noncommutative.

Generators of the skein algebra are closed curves which wind around the annulus some number of times.
To introduce some notation, let $C_k$ be the Legendrian curve as in Figure~\ref{fig_cyclic} which winds around the annulus $k>0$ times and with grading function $\phi$ with range in $(-1/2,1/2)$.
To see that the curve depicted in Figure~\ref{fig_cyclic} really is the Lagrangian projection of a Legendrian curve, at least up to planar isotopy, we need to show feasibility of a system of inequalities \cite{chekanov}. 
Let $A_i$, $i=1,\ldots,k-1$ be the area of the $i$-th region (from the top) cut out by the projection of $C_k$ and $h_i>0$, $i=1,\ldots,k-1$ the difference in $z$-coordinates of the strands over the self-crossings points, then by the Legendrian condition
\begin{equation}\label{cycle_leg_cond}
2h_i-h_{i-1}-h_{i+1}=A_i>0
\end{equation}
where $h_0=h_k=0$.
The system of inequalities \eqref{cycle_leg_cond} means geometrically that the piecewise linear function connecting the dots $(i,h_i)$, $i=0,\ldots,k$, is concave, and thus has many solutions, all of which satisfy $h_i>0$ for $i=1,\ldots,k-1$, so for the projection of a \textit{Legendrian} curve the crossings must be as shown in Figure~\ref{fig_cyclic}.

\begin{figure}[ht]
\centering
\begin{tikzpicture}[scale=.6]
\draw[dotted] (-4,-3) to (-4,3);
\draw[dotted] (4,-3) to (4,3);

\draw[thick] (-4,-2) to [out=0,in=180] (4,-1.5);
\draw[thick] (-4,-1.5) to [out=0,in=180] (4,-1);
\draw[thick] (-4,1) to [out=0,in=180] (4,1.5);
\draw[thick] (-4,1.5) to [out=0,in=180] (4,2);

\draw[white,line width=1mm] (-4,2) to [out=0,in=180] (4,-2);
\draw[thick] (-4,2) to [out=0,in=180] (4,-2);

\node at (-2,-.25) {\vdots};
\node at (2,.25) {\vdots};

\end{tikzpicture}
\caption{The Legendrian curve $C_k$ winding around the annulus $k$ times.}
\label{fig_cyclic}
\end{figure}
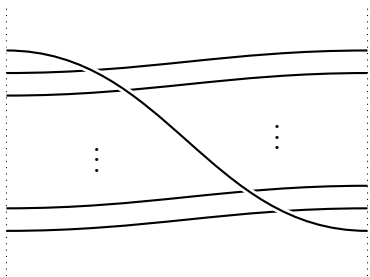

\begin{prop}\label{prop_annulus_skein_basis}
A basis of the graded Legendrian skein algebra of the annulus is given by links of the form
\begin{equation}\label{annulus_skein_basis}
C_{k_1}[n_1]\cdot C_{k_1}[n_2]\cdots C_{k_m}[n_m]
\end{equation}
where $m\geq 0$, $n_i\in\ZZ$, $n_1\geq n_2\geq \ldots \geq n_m$, $k_i>0$, and $k_i\geq k_{i+1}$ if $n_i=n_{i+1}$.
\end{prop}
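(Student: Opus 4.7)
The plan is to verify spanning by explicit skein manipulations and linear independence by applying the homomorphism $\Phi$ of Theorem~\ref{thm_homomorphism} and exploiting the slicing structure of the Hall algebra.

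For \textbf{spanning}, I would proceed in two reductions. First, reduce an arbitrary link to a linear combination of disjoint unions of simple closed curves isotopic to various $C_k[n]$'s: starting from a generic front projection, S1 eliminates crossings (modulo diagrams of lower crossing number), S3 annihilates teardrops, and S2 absorbs contractible loops. Since the only isotopy classes of embedded non-contractible closed curves in the annulus are the various $C_k$'s with $k>0$, induction on a suitable complexity (e.g.\ total number of crossings plus cusps in the front) shows every link class lies in the span of products $C_{k_1}[n_1] \cdots C_{k_m}[n_m]$. Second, reorder the factors into the standard form by applying S1 at a crossing between two stacked parallel curves; a direct case analysis shows that swapping $C_k[n]$ with $C_{k'}[n']$ introduces correction terms with either fewer components or simpler winding data, so that induction on the lexicographic complexity of the tuple $(m;n_\bullet;k_\bullet)$ terminates at a representation with $n_1 \geq \ldots \geq n_m$ and $k_i \geq k_{i+1}$ when $n_i = n_{i+1}$.

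For \textbf{linear independence}, I would invoke $\Phi$ together with the equivalence $\mc F^{\vee} \cong D^b(\mathrm{Mod}_{\mathrm{fd}}(\KK[x^{\pm}]))$. Since $\mathrm{Mod}_{\mathrm{fd}}(\KK[x^{\pm}])$ is hereditary, the standard t-structure gives a slicing satisfying \eqref{slicing_split}, hence by \eqref{hall_tprod_decomp}
\[
\mathrm{Hall}(\mc F^{\vee}) \cong \bigotimes_{n \in \ZZ} \mathrm{Hall}(\mathrm{Mod}_{\mathrm{fd}}(\KK[x^{\pm}])),
\]
with the right-to-left map given by ordered multiplication (higher phase first). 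Because the image of $C_k[n]$ under $\Phi$ is supported in phase $n$ (its underlying object is a $k$-dimensional $\KK[x^\pm]$-module shifted by $n$), a basis element $C_{k_1}[n_1] \cdots C_{k_m}[n_m]$ with the stated inequalities decomposes across the tensor factors, becoming in each phase $n$ a product $\Phi(C_{k_{i_1}}[n]) \cdots \Phi(C_{k_{i_r}}[n])$ with $k_{i_1} \geq \ldots \geq k_{i_r}$. These are linearly independent by the classical theory of the Hall algebra of cyclic $\KK[x^\pm]$-modules: after identifying $\Phi(C_k[n])$ with a nonzero scalar multiple of $\sum_{\dim M=k}[M]$, the family is the partition-indexed analog of Hall--Littlewood power sums, whose linear independence in the Hall algebra is classical.

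The \textbf{main obstacle} I expect is the reordering step in the spanning argument. Resolving the crossings of two stacked parallel curves via S1 produces correction terms containing merged components of higher winding or capped-off pieces, and for the induction to close one must identify a complexity function on formal products that strictly decreases under each such correction. This is essentially the Legendrian, graded analog of Turaev's analysis in the non-Legendrian case~\cite{turaev88,turaev}, which should provide a template. Combined with Part B, this will also immediately yield Theorem~\ref{thm_annulus} on injectivity of $\Phi$.
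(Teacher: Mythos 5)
Your overall strategy (spanning by skein manipulations, linear independence via $\Phi$ and the slicing of the Hall algebra) matches the paper's, but the proposal leaves the genuinely hard step of the spanning argument unresolved. Relation \eqref{gs_1} does not ``eliminate'' a crossing: it exchanges the over/under strands at the cost of a factor $q^{\pm 1}$, modulo diagrams with fewer crossings, so a naive induction on crossing number does not terminate. The paper instead works with tangles: it first reduces (via Proposition~\ref{prop_tangle_gen}) to closures of tangles that are permutation braids whose strands are grouped by degree, using an induction on the \emph{disorder} of the boundary rather than on crossing number. The remaining — and essential — difficulty is to show that the closure of an arbitrary permutation braid on $n$ strands of equal degree lies in the span of the $C_k$'s. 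This is not a local skein manipulation at all: the paper resolves it by observing that \eqref{gs_1} and the Reidemeister III move are the defining relations of the Iwahori--Hecke algebra $H_n$, that braid closure factors through $H_n/[H_n,H_n]$, and by importing Bigelow's basis of that quotient in terms of partitions and the $C_k$'s. You correctly flag this as the ``main obstacle,'' but pointing to Turaev's non-Legendrian analysis as a template is not a proof; without the Hecke-algebra input (or an equivalent argument) the spanning claim is open.

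On linear independence, your route is close to the paper's but contains an error and a gap. By Lemma~\ref{lem_cyclic_braid_comp}, $\Phi(C_k)$ is $(q-1)^{-1}$ times the sum over \emph{cyclic} $k$-dimensional $\KK[x^{\pm}]$-modules (companion matrices), not over all $k$-dimensional modules as you assert; e.g.\ $(\KK[x^{\pm}]/(x-a))^{\oplus 2}$ does not appear for $k=2$. The subsequent appeal to ``classical'' linear independence of products of these elements in $\mathrm{Hall}(\mathrm{Mod}_{\mathrm{fd}}(\KK[x^{\pm}]))$ is plausible but unsubstantiated: the category splits over the closed points of $\GG_m$ and these sums spread across all factors. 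The paper makes this step precise by projecting onto the unipotent part via $J^!$, under which $\Phi(C_k)\mapsto (q-1)^{-1}[I_k]$, so that the images of the proposed basis elements become the monomials $[I_{k_1}[n_1]]\cdots[I_{k_m}[n_m]]$, a basis of $\mathrm{Hall}(D^b(\mathrm{Mod}_{\mathrm{un}}))$ by \eqref{hall_tprod_decomp} and freeness of the classical Hall algebra; one must also verify that $J^!$ is multiplicative on the relevant products, which fails in general and is checked case by case in the paper. Your argument can be repaired along these lines, but as written the identification of $\Phi(C_k)$ and the independence claim are not correct.
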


The proof of this proposition has two parts. 
The first is to show that the elements \eqref{annulus_skein_basis} span the skein, which uses Proposition~\ref{prop_tangle_gen} for tangles, and the second is show linear independence, for which we will use the homomorphism $\Phi$ to the Hall algebra.

\subsubsection{Spanning the skein}

For the links in \eqref{annulus_skein_basis} the grading increases from top to bottom when viewed under front projection.
To set up some terminology, call a subset $E\subset\RR$ with grading $\deg:E\to\ZZ$ \textbf{sorted} if $\deg$ is an increasing function, and more generally define the \textbf{disorder} of a graded subset as
\[
\delta(E,\deg):=\left|\left\{x,y\in E\mid x<y, \deg(x)>\deg(y)\right\}\right|
\]
so $\delta(E,\deg)=0$ if and only if $(E,\deg)$ is sorted.

\begin{lemma}\label{lem_annulus_gen1}
Closures of tangles $L$ with $\partial_0L=\partial_1L$ sorted span the skein of the annulus.
\end{lemma}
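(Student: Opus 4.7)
The plan is to prove the stronger statement that every Legendrian link in the annulus is in fact the closure of a single tangle with sorted boundary; the spanning claim is then immediate.

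First I would represent a given graded Legendrian link $K$ in the annulus $S\times\RR$ as the closure of a tangle $L$ by perturbing $K$ to meet the vertical slice $\{0\}\times[-1,1]\times\RR$ transversely and cutting along it. This produces a tangle in the strip $[0,1]\times[-1,1]\times\RR$ with $\partial_0 L = \partial_1 L =: E$, a finite graded set (under the front projection). If $E$ is already sorted, we are done; otherwise, let $\tau$ be the permutation of $E$ that sorts the grading.

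Next I would choose a Legendrian permutation braid $\pi:E\to E_{\mathrm{sort}}$ realizing $\tau$, as defined in Subsection~\ref{subsec_tangles}; its horizontally mirrored companion $\pi^{-1}:E_{\mathrm{sort}}\to E$ (same permutation, reversed crossings) then satisfies $\pi\circ\pi^{-1}=1$ in the skein of tangles, by repeatedly applying Legendrian Reidemeister~II to cancel adjacent inverse crossings. Form the conjugate $\widetilde L := \pi^{-1}\circ L\circ\pi$, which is a tangle $E_{\mathrm{sort}}\to E_{\mathrm{sort}}$ with sorted boundary.

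Finally I would invoke the cyclicity of closure: cutting the annulus along a different vertical slice produces a cyclically rotated tangle with the same closure, so $\mathrm{Closure}(A\circ B) = \mathrm{Closure}(B\circ A)$ whenever both compositions are defined. Applying this with $A=\pi^{-1}$ and $B=L\circ\pi$ yields
$$\mathrm{Closure}(\widetilde L) \;=\; \mathrm{Closure}(\pi^{-1}\circ L\circ\pi) \;=\; \mathrm{Closure}(L\circ\pi\circ\pi^{-1}) \;=\; \mathrm{Closure}(L) \;=\; K,$$
so $K$ is the closure of the sorted-boundary tangle $\widetilde L$, which proves the lemma.

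The main obstacle is simply to pin down the two underlying geometric inputs cleanly: existence of a Legendrian permutation braid realizing an arbitrary permutation of graded endpoints (immediate from the definition in Subsection~\ref{subsec_tangles}), and the identity $\pi\circ\pi^{-1}=1$ together with cyclicity of closure. Neither requires the full skein relations---only Legendrian isotopy of tangles---so no induction on disorder or case analysis of skein moves is needed.
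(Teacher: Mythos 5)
The proposal breaks down at the assertion that $\pi\circ\pi^{-1}=1$ in the skein of tangles ``by repeatedly applying Legendrian Reidemeister~II to cancel adjacent inverse crossings.'' In the Legendrian category there is no inverse crossing to cancel against: in the front projection a crossing carries no over/under data at all (it is determined by the slopes), so your ``horizontally mirrored companion'' $\pi^{-1}$ composes with $\pi$ to give a tangle in which each pair of strands that crossed once now crosses twice, and no Legendrian Reidemeister move removes a double crossing of the same pair of strands. This is exactly what the skein relation \eqref{gs_1} quantifies: in the form \eqref{gs_1fa} it says
\[
1_m\otimes 1_n \;=\; q^{(-1)^{m-n}}\,\sigma_{m,n}\sigma_{n,m}\;+\;\delta_{m,n}(q-1)(\text{cup--cap})\;-\;\delta_{m,n+1}(1-q^{-1})(\text{other cup--cap}),
\]
so $\sigma_{m,n}\sigma_{n,m}$ equals the identity tangle only up to a power of $q$ and correction terms with cusps --- the crossings generate a Hecke algebra, not a group. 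Consequently your ``stronger statement'' (that every link is the closure of a single sorted-boundary tangle, via genuine isotopy) is not established and is almost certainly false; conjugation by a permutation braid changes the class of the closure by these $q$-factors and lower-order terms.

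This is precisely why the paper's proof cannot avoid the skein relations: it conjugates by \emph{one} crossing at a time, applies \eqref{gs_1} to the resulting double crossing to write $L$ as $q^{-(-1)^{m-n}}L(1_b\otimes\sigma_{m,n}\sigma_{n,m}\otimes 1_t)$ plus a correction term $\delta_{m,n+1}(q-1)L(1_b\otimes\rho_n\lambda_n\otimes 1_t)$, uses cyclicity of the closure on each summand, and inducts on the disorder $\delta(\partial_0 L)$. Your setup (cutting the annulus to get a tangle with $\partial_0L=\partial_1L$, and cyclicity of closure) is fine and matches the paper; what is missing is the replacement of the false isotopy $\pi\circ\pi^{-1}=1$ by the skein-relation identity above, which inevitably produces a linear combination rather than a single tangle --- hence a spanning statement, which is all the lemma claims. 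To repair the argument you would need to reinstate the induction on disorder (or an equivalent bookkeeping of the correction terms), at which point it becomes the paper's proof.
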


\begin{proof}
Given an arbitrary tangle $L$ we need to show that its closure is a linear combination of closures of tangles with sorted boundary.
We will prove this by induction on the disorder $\delta=\delta(\partial_0L)$.
In the base case $\delta=0$ there is nothing to show.
Assume $\delta>0$, then the there is a pair of neighboring points $x_1<x_2$ in $\partial_0L$ with $m:=\deg(x_1)>\deg(x_2)=:n$.
Let $1_b$ (resp. $1_t$) be the tangle with horizontal strands corresponding to points in $\partial_0L$ which are below $x_1$ (resp. above $x_2$).
In the skein of tangles we have
\[
L = q^{-(-1)^{m-n}}L(1_b\otimes (\sigma_{m,n}\sigma_{n,m})\otimes 1_t) + \delta_{m,n+1}(q-1)L(1_b\otimes (\rho_n\lambda_n)\otimes 1_t)
\]
by \eqref{gs_1}.
The closure of the first tangle is equal to the closure of $(1_b\otimes \sigma_{n,m}\otimes 1_t)L(1_b\otimes \sigma_{m,n}\otimes 1_t)$ and the closure of the second tangle is equal to the closure of $(1_b\otimes \lambda_n\otimes 1_t)L(1_b\otimes \rho_n\otimes 1_t)$, both of which have boundary with strictly smaller disorder.
\end{proof}

\begin{lemma}\label{lem_annulus_gen2}
Closures of tangles $L$ of the form
\[
L=B_1\otimes \cdots \otimes B_m,
\]
where each $B_i$ is a permutation braid with all strands of the same degree $d_i\in\ZZ$ and $d_i\leq d_{i+1}$, generate the skein of the annulus.
\end{lemma}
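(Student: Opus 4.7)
The plan is to combine Lemma~\ref{lem_annulus_gen1} and Proposition~\ref{prop_tangle_gen}: the former reduces the spanning problem to closures of tangles $L$ with sorted boundary $\partial_0L=\partial_1L$, and the latter decomposes each such $L$ as $L=L_R\cdot P\cdot L_L$ where $L_R$ (resp.~$L_L$) is a sequence of right (resp.~left) cusp tangles and $P$ is a permutation braid. I would argue by induction on the lexicographic pair (number of cusps of $L$, number of crossings of $L$) that the closure of any such $L$ lies in the span of the claimed generators.

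For the inductive step, I first eliminate cusps. Each right cusp in $L_R$ is joined to a matching left cusp in $L_L$ by an arc that passes through $P$ and the closing seam of the annulus. Exploiting the $S^1$-factor to slide features past the seam, Legendrian isotopy brings such a cusp pair into a local neighborhood where it can be cancelled via the Reidemeister~I simplification \eqref{gs_2f}. Intermediate moves, in which the sliding arc is pulled past a crossing of $P$, are handled by the crossing-change relation \eqref{gs_1f}: the scalar term reduces crossings, while the cup-cap correction produces a local pattern whose cusps become adjacent to the moving cusp and are eliminated immediately. In every case the total complexity strictly decreases, so the inductive hypothesis applies to the resulting terms.

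Once $L=P$ is a cusp-free permutation braid with sorted boundary, its strands have degrees $d_1\leq\cdots\leq d_k$ from bottom to top, and every crossing involves strands of two distinct degrees $m<n$. Applying \eqref{gs_1}: if $n-m\geq 2$ both Kronecker deltas vanish and we may exchange a positive crossing for $q^{(-1)^{m-n}}$ times a negative crossing; if $n=m+1$ the only correction is the cup-cap term, which introduces two cusps but strictly fewer crossings and so is handled by induction. Systematically reversing crossings between different-degree strands eventually produces a permutation braid whose crossings all connect strands of equal degree. Since the boundary is sorted, strands of a given degree occupy a contiguous horizontal band, so $P$ factors tautologically as $B_1\otimes\cdots\otimes B_m$ with each $B_i$ a monochromatic permutation braid and $d_1\leq\cdots\leq d_m$ as required.

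The main obstacle is the cusp-elimination step: one must confirm that sliding a matched cusp pair around the annulus and through $P$ really does reduce the chosen complexity without creating an unbounded cascade of increasingly complicated tangles. The essential observation is that every atomic move either eliminates a cusp pair outright (strictly lowering the primary invariant) or applies \eqref{gs_1f} to a crossing, which either smooths it (lowering the crossing count while fixing the cusp count) or produces a cup-cap adjacent to the sliding cusp that immediately cancels. Hence the lexicographic complexity strictly decreases at every step and the induction terminates.
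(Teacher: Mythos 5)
Your overall strategy diverges from the paper's and, more importantly, it has real gaps. The paper's proof is a short direct argument that needs no further induction and no skein relations beyond what is already inside Proposition~\ref{prop_tangle_gen} and Lemma~\ref{lem_annulus_gen1}: the two strands emanating from a right cusp carry degrees $n$ and $n+1$ with the higher degree \emph{below}, and by definition of a right cusp tangle they never cross each other, so the presence of any right cusp forces $\partial_0L$ to be unsorted (and likewise left cusps force $\partial_1L$ to be unsorted). Hence a generator of the form in Proposition~\ref{prop_tangle_gen} with sorted boundary has no cusps at all and is just a permutation braid; and since in a permutation braid each pair of strands crosses at most once while $\partial_0L=\partial_1L$ is sorted (so strands of a fixed degree occupy the same contiguous band at both ends), two strands of different degrees cannot cross, giving the factorization $B_1\otimes\cdots\otimes B_m$ directly. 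You never make the observation that cusps are incompatible with sorted boundary, and as a result you set up two elaborate reduction procedures for cases that do not actually occur --- and neither procedure is sound as written.

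Concretely: (i) your cusp-elimination step pairs each right cusp with "a matching left cusp," but no such canonical pairing exists, and the claimed cancellation of the cup--cap correction term "adjacent to the sliding cusp" is unjustified --- there is no relation in the list that cancels a right-cusp/left-cusp pair on a through-strand (a zigzag is killed by \eqref{gs_3}, i.e.\ set to zero, not removed by isotopy), and \eqref{gs_2f} applies only to a closed unknotted component. (ii) Your termination argument is inconsistent with your own complexity: applying \eqref{gs_1} to a crossing of adjacent degrees produces the $\rho_n\lambda_n$ term, which raises the cusp count from $0$ to $2$, so the lexicographic pair (cusps, crossings) \emph{increases}. (iii) The final step, "systematically reversing crossings between different-degree strands eventually produces a permutation braid whose crossings all connect strands of equal degree," is incoherent: reversing a crossing changes over/under but does not remove the crossing from the projection, and in a permutation braid a pair of strands crosses at most once, so no Reidemeister~II cancellation is available. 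The correct statement is that such crossings are already absent, for the sortedness reason above.
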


\begin{proof}
Let $L$ be a tangle of the form described in Proposition~\ref{prop_tangle_gen}, with a sequence of right cusp tangles, a permutation braid, and a sequence of left cusp tangles.
Suppose $L$ has a right cusp, then looking at the endpoints of the two strands starting at that cusp we see that $\partial_0L$ is not sorted, similarly for left cusps and $\partial_1L$. 
Thus, if $L$ has sorted boundary, then it is just a permutation braid.
Moreover, by definition of a permutation braid any pair of strands crosses at most once, so if $\partial_0L=\partial_1L$ then no pair of strands with different grading can cross, thus $L$ is a vertical composition of permutation braids each which is made up of strands of the same degree, and so that degree increases when going upwards in the front projection.
Proposition~\ref{prop_tangle_gen} and Lemma~\ref{lem_annulus_gen1} thus imply the claim.
\end{proof}

It remains to show that the closure of a permutation braid with strands of the same degree is a linear combination of braids as in \eqref{annulus_skein_basis} with $n_i=0$.
A somewhat indirect argument which utilizes the existing literature goes as follows.
Fix $n$ and consider those braids in the skein of tangles which are have $n$ strands, all of which are in degree zero.
Among these the defining relations of the Iwahori--Hecke algebra $H_n$ hold: The quadratic relation \eqref{gs_1} and the braid relation aka third Reidemeister move.
We are interested in the quotient $H_n/[H_n,H_n]$ of $H_n$ by the submodule generated by commutators, which is, by taking the braid closure, a summand of the skein of the annulus.
In this description, the desired basis of $H_n/[H_n,H_n]$ in terms of partitions and the braids $C_k$ is found by Bigelow~\cite{bigelow06}.
On the other hand, $H_n/[H_n,H_n]$ maps to the graded \textit{Legendrian} skein of annulus and the image includes closures of permutations braids by definition, so we can transfer the result to the Legendrian setting.
This concludes the proof that the elements \eqref{annulus_skein_basis} span the skein of the annulus.

\subsubsection{Mapping to the Hall algebra}

\begin{lemma}\label{lem_cyclic_braid_comp}
The cyclic braid $C_k$ in the skein algebra of the annulus maps to the element 
\[
\Phi(C_k)=(q-1)^{-1}\sum_{\substack{a_1,\ldots,a_{k-1}\in\KK \\ a_0\in\KK^{\times}}}[(\KK^k,\mathrm{CMAT}(a_0,a_1,\ldots,a_{k-1}))]
\]
in the Hall algebra of $\mc F^{\vee}\cong D^b(\mathrm{Mod}_{\mathrm{fd}}(\KK[x^{\pm}]))$, where
\[
\mathrm{CMAT}(a_0,a_1,\ldots,a_{k-1}):=\begin{bmatrix} 
0 & 0 & & \cdots & & 0 & -a_0 \\ 
1 & 0 & & \cdots & & 0 & -a_1 \\ 
0 & 1 & & \cdots & & 0 & -a_1 \\ 
\vdots & & & \ddots & & & \vdots \\
0 & 0 & & \cdots & & 1 & -a_{k-1} 
\end{bmatrix}
\]
is the \textit{companion matrix}.
(The companion matrix is characterized as $k\times k$-matrix, up to similarity, by having minimal polynomial $x^k+a_{k-1}x^{k-1}+\ldots+a_1x+a_0$.)
\end{lemma}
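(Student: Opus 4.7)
My plan is to compute $\Phi(C_k)$ by evaluating formula~(\ref{Phi_formula}) directly and then matching each resulting object to a companion matrix module under the equivalence $\mc F^\vee \cong D^b(\mathrm{Mod}_{\mathrm{fd}}(\KK[x^\pm]))$ recalled at the beginning of this subsection.

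First I would verify the prefactor. Since $C_k$ is connected, $|\pi_0(C_k)| = 1$. A direct inspection of Figure~\ref{fig_cyclic} combined with the convention $\phi \in (-1/2, 1/2)$ shows that at every self-intersection of $C_k$ the two strands have gradings differing by an amount in $(0, 1)$, so every self-intersection has index exactly $1$; hence $e(C_k) = 0$, and (\ref{Phi_formula}) yields
\[
\Phi(C_k) = (q-1)^{-1} \sum_{E} \sum_{\delta \in \mc{MC}(C_k, E)} [(C_k, E, \delta)],
\]
with $E$ ranging over the $q-1$ rank one local systems, parametrized by monodromy $\mu \in \KK^\times$. Next I would analyze $\mc{MC}(C_k, E)$. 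The space $\Hom((C_k, E), (C_k, E))_{>0}$ has basis $x_1, \ldots, x_{k-1}$ in degree $1$, one generator per self-intersection, and is zero in all other degrees; in particular $\Hom^2_{>0} = 0$, so the Maurer--Cartan equation is vacuous on $\Hom^1_{>0}$ and every $\delta_g := \sum_i g_i x_i$ with $g = (g_1, \ldots, g_{k-1}) \in \KK^{k-1}$ is Maurer--Cartan. This gives $(q-1)q^{k-1}$ pairs $(E_\mu, \delta_g)$, matching the cardinality of the companion matrix sum on the right-hand side.

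The heart of the proof is to identify $(C_k, E_\mu, \delta_g)$ as a $\KK[x^\pm]$-module. For the base case $g = 0$, $C_k$ is a $k$-fold connected cover of the core circle $C_1$; pushforward along this cover sends the rank one local system with monodromy $\mu$ to the rank $k$ local system on $C_1$ whose monodromy is the cyclic shift matrix with top-right entry $\mu$, i.e., the companion matrix of $x^k - \mu$, giving $(a_0, a_1, \ldots, a_{k-1}) = (-\mu, 0, \ldots, 0)$ (up to a universal sign depending on the parity of $k$). For general $g$, I would invoke Proposition~\ref{prop_resolve} iteratively at each crossing where $g_i \neq 0$, identifying $(C_k, E_\mu, \delta_g)$ with a Maurer--Cartan deformation on a partially smoothed link; reassembling the extension data between successive copies of $C_1$ into a module structure on $\KK^k$ produces a cyclic matrix whose entries are polynomial functions of $\mu$ and $g$. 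Verifying that the resulting assignment $(\mu, g) \mapsto (a_0, a_1, \ldots, a_{k-1})$ is a bijection of $\KK^\times \times \KK^{k-1}$ onto itself finishes the argument: the linearization at $g = 0$ is upper-triangular with invertible diagonal, so the map is polynomial with polynomial inverse, and hence a bijection on the whole affine space.

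The hardest part will be making this last identification precise: tracking parallel transport along the smoothed segments, the sign conventions in the $A_\infty$-structure, and how the iterated cones produced by Proposition~\ref{prop_resolve} assemble into the exact companion matrix form rather than some other similar-but-distinct cyclic presentation. A cleaner alternative would be to bypass the explicit matrix computation by characterizing both sides as measures on the subset of the Hall algebra supported on cyclic $\KK[x^\pm]$-modules of dimension $k$ with $x$ invertible, and to pin down the coefficients by computing a single matrix element against $[C_1]^k$ using the slicing tensor-product decomposition~(\ref{hall_tprod_decomp}) applied to the standard t-structure.
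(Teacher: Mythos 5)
Your reduction to the prefactor $(q-1)^{-1}$, the vanishing of $e(C_k)$, and the observation that $\mc{MC}(C_k,E)=\Hom^1((C_k,E),(C_k,E))_{>0}\cong\KK^{k-1}$ all agree with the paper, and the cardinality count $(q-1)q^{k-1}$ on both sides is a useful sanity check. But the core of the lemma --- that the \emph{specific} object $(C_k,E_\mu,\delta_g)$ is isomorphic to the \emph{specific} module $(\KK^k,\mathrm{CMAT}(-\mu,g_1,\dots,g_{k-1}))$, each companion matrix occurring exactly once --- is exactly the part you leave as a sketch, and neither of your proposed routes closes it. The pushforward claim for $g=0$ is an isomorphism \emph{inside the Fukaya category} between an immersed object with trivial bounding cochain and an embedded curve with a rank-$k$ local system; it is a special case of what is to be proved, not an input. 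The iterated use of Proposition~\ref{prop_resolve} is delicate because smoothing the $i$-th crossing disconnects $C_k$ into two shorter cycles, so ``reassembling the extension data'' is precisely the hard bookkeeping you defer. Most seriously, the final bijectivity step is based on a false principle: a polynomial map of affine space whose linearization at one point is invertible need not be a bijection (over $\FF_q$ already $x\mapsto x+x^2$ fails), so ``upper-triangular invertible linearization at $g=0$ implies polynomial inverse'' does not hold; you would need an actual triangular dependence of $a_i$ on $(g_1,\dots,g_i)$ globally, which you have not established. The ``cleaner alternative'' of pairing against $[C_1]^k$ also presupposes that $\Phi(C_k)$ is supported on cyclic modules with uniform coefficients, which again is the content of the lemma.

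The paper avoids all of this by computing the module directly through the Morita/Yoneda functor: it intersects $C_k$ with the generating arc $L=\{0\}\times[-1,1]$ of $\mc F$, obtaining a $k$-dimensional complex in a single degree with basis the intersection points $y_1,\dots,y_k$, and then enumerates the triangles with edges on $L$, a wrapped copy of $L$, and $C_k$ (Figure~\ref{fig_cyclic_arc}). Each such triangle contributes one matrix entry of the action of $x$, and the picture yields the companion matrix on the nose, with $-a_0$ the monodromy of $E$ and $a_i$ the coefficient of $\delta$ at the $i$-th self-intersection from the bottom. This is a finite, explicit disk count with no smoothing, no induction on crossings, and no inverse-function argument; I recommend you redo the identification this way.
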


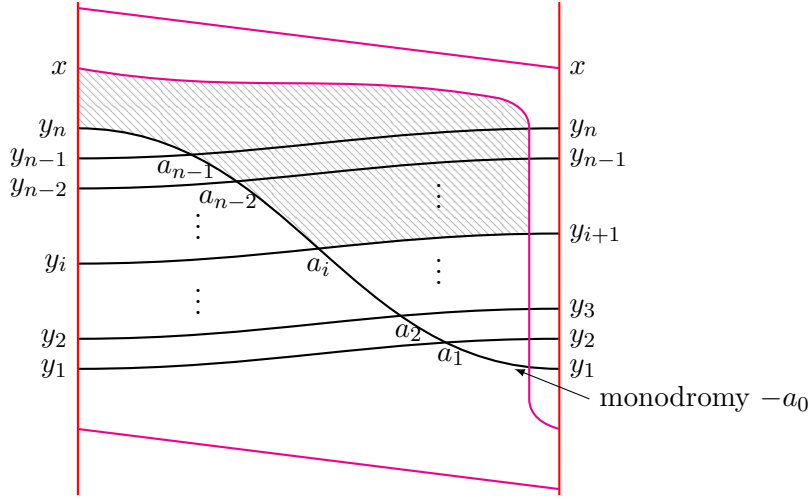
\begin{figure}[ht]
\centering
\begin{tikzpicture}[scale=.8]
\path[fill,pattern=north west lines,pattern color=lightgray] (-4,3) to [out=-10,in=170] (3,2.5) to [out=-15,in=90] (3.5,2) to (3.5,-2.5) to (3.5,.281) to (0,0) to [out=140,in=0] (-4,2) to (-4,3);

\draw[thick] (-4,-2) to [out=0,in=180] (4,-1.5);
\draw[thick] (-4,-1.5) to [out=0,in=180] (4,-1);
\draw[thick] (-4,-.25) to [out=0,in=180] (4,.25);
\draw[thick] (-4,1) to [out=0,in=180] (4,1.5);
\draw[thick] (-4,1.5) to [out=0,in=180] (4,2);

\draw[thick] (-4,2) to [out=0,in=180] (4,-2);

\node at (-2,-.75) {\vdots};
\node at (2,1) {\vdots};
\node at (-2,.5) {\vdots};
\node at (2,-.25) {\vdots};

\draw[thick,red] (-4,-4.1) to (-4,4.1);
\draw[thick,red] (4,-4.1) to (4,4.1);

\draw[thick,magenta] (-4,4) to (4,3);
\draw[thick,magenta] (-4,3) to [out=-10,in=170] (3,2.5) to [out=-15,in=90] (3.5,2) to (3.5,-2.5) to [out=-90,in=165] (4,-3);
\draw[thick,magenta] (-4,-3) to (4,-4);

\node[left] at (-4,3) {$x$};
\node[left] at (-4,2) {$y_n$};
\node[left] at (-4,1.5) {$y_{n-1}$};
\node[left] at (-4,1) {$y_{n-2}$};
\node[left] at (-4,-.25) {$y_i$};
\node[left] at (-4,-1.5) {$y_2$};
\node[left] at (-4,-2) {$y_1$};
\node[right] at (4,3) {$x$};
\node[right] at (4,2) {$y_n$};
\node[right] at (4,1.5) {$y_{n-1}$};
\node[right] at (4,.25) {$y_{i+1}$};
\node[right] at (4,-1) {$y_3$};
\node[right] at (4,-1.5) {$y_2$};
\node[right] at (4,-2) {$y_1$};

\node at (-2.2,1.3) {$a_{n-1}$};
\node at (-1.5,.8) {$a_{n-2}$};
\node[below] at (0,0) {$a_i$};
\node at (1.5,-1.4) {$a_2$};
\node at (2.2,-1.8) {$a_1$};
\draw[->] (4.5,-2.5) to (3.25,-2);
\node[right] at (4.5,-2.5) {monodromy $-a_0$};

\end{tikzpicture}
\caption{Computing the $\KK[x^{\pm}]$ module corresponding to $C_k$ with given monodromy and Maurer--Cartan element. The generator of $\mc F$ is drawn in red, its wrapped copy in magenta. Quadrilaterals like the one drawn contribute to the action of $x$ on the vector space with basis $y_1,\ldots,y_n$}
\label{fig_cyclic_arc}
\end{figure}

\begin{proof}
The morphism space $\Hom((C_k,E),(C_k,E))_{>0}$, where $E$ is any local system on $C_k$, is concentrated in degree one with basis the $k-1$ self-intersection points of $C_k$, so $\mc{MC}(C_k,E)=\Hom((C_k,E),(C_k,E))_{>0}$, for which we get a basis by trivializing $E$ away from the point at the very bottom of $C_k$.
To find the representation of $\KK[x^{\pm}]$ corresponding to a given choice of monodromy and Maurer--Cartan element we need to intersect with the arc $L=\{0\}\times [-1,1]$, which gives a complex concentrated in a single degree with basis $y_1,\ldots,y_n$, and look for triangles with edges on a perturbed copy of $L$, $L$, and $C_k$, see Figure~\ref{fig_cyclic_arc}.
We get $\mathrm{CMAT}(a_0,a_1,\ldots,a_{k-1})$ where $-a_0$ is the monodromy of $E$ and $a_i$, $i=1,\ldots,k-1$, is the  coefficient of the Maurer--Cartan element at the $i$-th self-intersection point from the bottom.
\end{proof}

In order to prove injectivity of $\Phi$ we will consider the full subcategory
$\mathrm{Mod}_{\mathrm{un}}(\KK[x^{\pm}])$ of $\mathrm{Mod}_{\mathrm{fd}}(\KK[x^{\pm}])$ consisting of those modules where $x$ acts by a unipotent endomorphism $U$, or equivalently the spectrum of $U$ is concentrated at $1\in\overline{\KK}$.
To simplify the notation write $\mathrm{Mod}_{\mathrm{un}}:=\mathrm{Mod}_{\mathrm{un}}(\KK[x^{\pm}])$ and $\mathrm{Mod}_{\mathrm{fd}}:=\mathrm{Mod}_{\mathrm{fd}}(\KK[x^{\pm}])$.
The category $\mathrm{Mod}_{\mathrm{un}}$ has, for each $k\geq 1$, a unique up to isomorphism indecomposable object $I_k$ of dimension $k$ where $x$ acts by an endomorphism with a single Jordan block and eigenvalue $1$.
The Hall algebra of $\mathrm{Mod}_{\mathrm{un}}$ is the \textit{classical Hall algebra} studied by Steinitz and Hall and is the free commutative algebra with a generator $[I_k]$ in each degree $k\geq 1$.
For the derived category $D^b(\mathrm{Mod}_{\mathrm{un}})$ we can use \eqref{hall_tprod_decomp} applied to slicing given by the standard t-structure to find the following basis of the Hall algebra.

\begin{lemma}\label{lem_hall_un_basis}
The Hall algebra $\mathrm{Hall}(D^b(\mathrm{Mod}_{\mathrm{un}}))$ has a basis given by the elements
\begin{equation}
[I_{k_1}[n_1]]\cdot [I_{k_1}[n_2]]\cdots [I_{k_m}[n_m]]
\end{equation}
where $m\geq 0$, $n_i\in\ZZ$, $n_1\geq n_2\geq \ldots \geq n_m$, $k_i>0$, and $k_i\geq k_{i+1}$ if $n_i=n_{i+1}$.
\end{lemma}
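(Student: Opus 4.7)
The plan is to apply the tensor-product decomposition \eqref{hall_tprod_decomp} to the slicing of $D^b(\mathrm{Mod}_{\mathrm{un}})$ coming from the standard bounded t-structure, and then invoke the classical Hall--Steinitz basis of $\mathrm{Hall}(\mathrm{Mod}_{\mathrm{un}})$.

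First I would define the slicing by setting $\mc C_n := \mathrm{Mod}_{\mathrm{un}}[n]$ for $n\in\ZZ$ and $\mc C_\phi := 0$ for $\phi\notin\ZZ$. The axioms of a slicing are immediate from the fact that $\mathrm{Mod}_{\mathrm{un}}$ is the heart of a bounded t-structure on $D^b(\mathrm{Mod}_{\mathrm{un}})$, with Harder--Narasimhan filtrations provided by the cohomology functors of the t-structure. The key additional property to verify is the split condition \eqref{slicing_split}: for $n_1<n_2$ and $M_i\in\mathrm{Mod}_{\mathrm{un}}$,
\[
\Ext^1(M_1[n_1],M_2[n_2])=\Ext^{1+n_2-n_1}_{\KK[x^{\pm}]}(M_1,M_2)=0,
\]
which holds because $\KK[x^{\pm}]$ is a principal ideal domain, hence hereditary, so $\Ext^{\geq 2}$ vanishes and $1+n_2-n_1\geq 2$.

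Once \eqref{slicing_split} is checked, applying \eqref{hall_tprod_decomp} gives an isomorphism of $\QQ$-vector spaces
\[
\mathrm{Hall}(D^b(\mathrm{Mod}_{\mathrm{un}}))\;\cong\;\bigotimes_{n\in\ZZ}\mathrm{Hall}(\mathrm{Mod}_{\mathrm{un}}[n]),
\]
and the natural map sends a pure tensor $[A_1]\otimes\cdots\otimes[A_m]$ (with $A_i$ in degree $n_i$ and $n_1>\cdots>n_m$) to the product $[A_1]\cdots[A_m]$ in the Hall algebra. The shift $[n]$ identifies $\mathrm{Hall}(\mathrm{Mod}_{\mathrm{un}}[n])$ with $\mathrm{Hall}(\mathrm{Mod}_{\mathrm{un}})$, which is the classical commutative Hall algebra of Steinitz, a polynomial algebra in the generators $[I_k]$, $k\geq 1$. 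A basis is therefore given by the monomials $[I_{k_1}]\cdots[I_{k_r}]$ with $k_1\geq k_2\geq\cdots\geq k_r$.

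Combining these two statements, a basis of $\mathrm{Hall}(D^b(\mathrm{Mod}_{\mathrm{un}}))$ is obtained by choosing, for each $n\in\ZZ$, a monomial in the $[I_k[n]]$ with weakly decreasing $k$, and then taking the product of these monomials in strictly decreasing order of $n$. Flattening this into a single product recovers precisely the expressions listed in the statement, with the conventions $n_1\geq\cdots\geq n_m$ and $k_i\geq k_{i+1}$ whenever $n_i=n_{i+1}$. The only step that deserves care is verifying \eqref{slicing_split}, but this follows at once from heredity of $\KK[x^{\pm}]$; the rest is formal once one invokes the results of Subsection~\ref{subsec_hall} and the classical Hall--Steinitz theorem.
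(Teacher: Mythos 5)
Your proposal is correct and follows exactly the route the paper takes: the lemma is stated there as an immediate consequence of the tensor-product decomposition \eqref{hall_tprod_decomp} applied to the slicing coming from the standard t-structure, with the split condition \eqref{slicing_split} holding because $\KK[x^{\pm}]$ is hereditary, and the factors identified with the classical Hall--Steinitz algebra. Your verification of \eqref{slicing_split} via $\Ext^{1+n_2-n_1}=0$ is the one detail the paper leaves implicit, and it is carried out correctly.
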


To relate this to the elements in \eqref{annulus_skein_basis} indexed by the same set, we proceed as follows.
Let
\[
J:D^b(\mathrm{Mod}_{\mathrm{un}}(\KK[x^{\pm}]))\hookrightarrow D^b(\mathrm{Mod}_{\mathrm{fd}}(\KK[x^{\pm}]))
\]
be the induced inclusion of derived categories.
Both the inclusion of abelian categories and the inclusion of derived categories split in the sense that the bigger category is a direct sum of the smaller category and another category, which includes those modules where $x$ acts by an operator with spectrum not including $1$.
The functor $J$ induces a linear surjection between Hall algebras
\[
J^!:\mathrm{Hall}(D^b(\mathrm{Mod}_{\mathrm{fd}}))\longrightarrow \mathrm{Hall}(D^b(\mathrm{Mod}_{\mathrm{un}}))
\]
where by definition we have $J^!([A])=[A]$ if $A\in D^b(\mathrm{Mod}_{\mathrm{un}})$ and $J^!([A])=0$ else.
Our goal is to show:

\begin{lemma}
The composition $J^!\circ\Phi$ maps
\begin{equation}
(J^!\circ \Phi)(C_{k_1}[n_1]\cdots C_{k_m}[n_m])=(q-1)^{-m}[I_{k_1}[n_1]]\cdots [I_{k_m}[n_m]]
\end{equation}
where $k_i$, $n_i$ are as in Proposition~\ref{prop_annulus_skein_basis} and Lemma~\ref{lem_hall_un_basis}.
\end{lemma}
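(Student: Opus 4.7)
The plan is to reduce the claim to the single-factor identity $(J^!\circ \Phi)(C_k[n]) = (q-1)^{-1}[I_k[n]]$ by using the t-structure slicings on $D^b(\mathrm{Mod}_{\mathrm{fd}})$ and $D^b(\mathrm{Mod}_{\mathrm{un}})$ to control how $J^!$ interacts with the ordered Hall product. The single-factor identity then follows directly from Lemma~\ref{lem_cyclic_braid_comp}.

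First I would handle the single-factor case. Since $\Phi$ and $J^!$ both commute with the grading shift $[n]$, I may assume $n=0$. By Lemma~\ref{lem_cyclic_braid_comp}, $\Phi(C_k)=(q-1)^{-1}\sum [(\KK^k,\mathrm{CMAT}(a_0,\dots,a_{k-1}))]$, the sum being over $(a_0,\dots,a_{k-1})\in\KK^\times\times\KK^{k-1}$. The companion matrix has minimal polynomial $x^k+a_{k-1}x^{k-1}+\dots+a_0$, so the underlying $\KK[x^\pm]$-module lies in $\mathrm{Mod}_{\mathrm{un}}$ if and only if this polynomial equals $(x-1)^k$. This uniquely pins down $(a_0,\dots,a_{k-1})$, and for this tuple the companion matrix is similar to the single Jordan block $I_k$. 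Applying $J^!$ therefore kills every other summand, giving $(q-1)^{-1}[I_k]$.

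Next I would establish two compatibility properties of $J^!$ with Hall multiplication. Inside an abelian slice $\mathrm{Mod}_{\mathrm{fd}}[n]$, the generalized eigenspace decomposition yields $\mathrm{Mod}_{\mathrm{fd}}\cong\mathrm{Mod}_{\mathrm{un}}\oplus\mathrm{Mod}_{\ne 1}$ with both $\Hom$ and $\Ext^1$ vanishing between the two pieces, so $\mathrm{Hall}(\mathrm{Mod}_{\mathrm{fd}}[n])\cong\mathrm{Hall}(\mathrm{Mod}_{\mathrm{un}}[n])\otimes\mathrm{Hall}(\mathrm{Mod}_{\ne 1}[n])$ as algebras, under which $J^!$ becomes $\mathrm{id}\otimes\epsilon$ with $\epsilon([0])=1$ and $\epsilon([X])=0$ otherwise; in the abelian setting $\epsilon$ is an algebra homomorphism because any short exact sequence with nonzero outer terms has nonzero middle term, so $J^!$ is multiplicative on $\mathrm{Hall}(\mathrm{Mod}_{\mathrm{fd}}[n])$. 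Since $\mathrm{Mod}_{\mathrm{fd}}$ is hereditary, the t-structure slicing of $D^b(\mathrm{Mod}_{\mathrm{fd}})$ satisfies the split condition \eqref{slicing_split}, so any product $[X_1]\cdots[X_s]$ with $X_j$ in slice $N_j$ and $N_1>\dots>N_s$ equals $c\cdot[X_1\oplus\dots\oplus X_s]$ for a scalar $c$ computed from $\Ext$-dimensions; because $J$ is fully faithful, these dimensions agree whether computed in $D^b(\mathrm{Mod}_{\mathrm{un}})$ or in $D^b(\mathrm{Mod}_{\mathrm{fd}})$, so $J^!$ commutes with such between-slice products.

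To conclude, I would group the indices by the distinct values $N_1>\dots>N_s$ of the weakly decreasing sequence $(n_i)$ and set $Y_j:=\prod_{n_i=N_j}\Phi(C_{k_i}[N_j])$, a product within slice $N_j$. Then $\prod_i\Phi(C_{k_i}[n_i])=Y_1\cdots Y_s$ is a product of between-slice type, and the two compatibilities combine to give
$$J^!\left(\prod_i\Phi(C_{k_i}[n_i])\right)=\prod_j J^!(Y_j)=\prod_j\prod_{n_i=N_j}(J^!\circ\Phi)(C_{k_i}[N_j])=(q-1)^{-m}\prod_i[I_{k_i}[n_i]]$$
by the single-factor case. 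I expect the main technical obstacle to be the second compatibility: one must argue carefully that the $\Ext$-scalars match between the two Hall algebras and that the pure-tensor form of the product is preserved under $J^!$.
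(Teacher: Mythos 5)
Your proposal is correct and follows essentially the same route as the paper: the single-factor case via the unique unipotent companion matrix, multiplicativity of $J^!$ within an abelian slice via extension-closedness (extensions of non-unipotent modules are non-unipotent), and multiplicativity across slices via vanishing of the relevant $\Ext^1$ groups, which reduces those products to direct sums. Your tensor-product packaging of the in-slice case and the explicit grouping by the distinct values of the $n_i$ are just a more detailed write-up of the paper's two cases, with the added (and welcome) care of checking that the $\Ext$-scalars agree under the fully faithful inclusion.
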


\begin{proof}
First of all, $J^!(\Phi(C_k))=(q-1)^{-1}I_k$ by Lemma~\ref{lem_cyclic_braid_comp} and the fact that the companion matrix is unipotent only for a single choice of $a_i$'s, in which case we get a single Jordan block.

It is not true for general $A,B\in D^b(\mathrm{Mod}_{\mathrm{fd}})$ that
\begin{equation}\label{J_mult}
J^!([A][B])=J^!([A])J^!([B])
\end{equation}
however this does hold in the following cases, which together take care of the products of interest:
\begin{enumerate}[1)]
\item
$A,B\in \mathrm{Mod}_{\mathrm{fd}}$
\item
$\mathrm{Ext}^1(B,A)=0$
\end{enumerate}
First, if $A,B\in \mathrm{Mod}_{\mathrm{un}}$ then \eqref{J_mult} holds because the subcategory is extension closed.
On the other hand, if $A,B\in \mathrm{Mod}_{\mathrm{fd}}$ but one of $A$ or $B$ is not in $\mathrm{Mod}_{\mathrm{un}}$, then no extension of $A$ and $B$ is in $\mathrm{Mod}_{\mathrm{un}}$ either, so both sides of \eqref{J_mult} vanish.
In the second case, $\mathrm{Ext}^1(B,A)=0$, the product in the Hall algebra has only one term coming from the direct sum so \eqref{J_mult} holds by a similar reasoning.
\end{proof}

\begin{proof}[Proof of Theorem~\ref{thm_annulus} and Proposition~\ref{prop_annulus_skein_basis}]
By the previous lemma the elements \eqref{annulus_skein_basis} map, up to scalar factor, to a basis, but also span the skein, so must themselves form basis.
Furthermore, $J^!\circ\Phi$ is an isomorphism, thus $\Phi$ injective.
\end{proof}

\bibliographystyle{alpha}
\bibliography{skeinhall}

\Addresses

\end{document}